\documentclass[11pt]{article}

\usepackage[utf8]{inputenc}
\usepackage[T1]{fontenc}
\usepackage{amsmath,amssymb,amsthm}
\usepackage[margin=1.2in]{geometry}
\usepackage{hyperref}

\newtheorem{theorem}{Theorem}[section]
\newtheorem{lemma}[theorem]{Lemma}
\newtheorem{proposition}[theorem]{Proposition}
\newtheorem{corollary}[theorem]{Corollary}
\newtheorem{problem}[theorem]{Open Problem}
\theoremstyle{remark}
\newtheorem{remark}[theorem]{Remark}
\theoremstyle{definition}
\newtheorem{definition}[theorem]{Definition}

\theoremstyle{plain}
\newtheorem*{thmA}{Theorem A}

\newcommand{\ACA}{\mathsf{ACA}_0}
\newcommand{\RCA}{\mathsf{RCA}_0}
\newcommand{\RCAs}{{\mathsf{RCA}_0^*}}
\newcommand{\WKLs}{{\mathsf{WKL}_0^*}}
\newcommand{\WKL}{\mathsf{WKL}}
\newcommand{\ISig}{\mathrm{I}\Sigma^0_1}
\newcommand{\Coll}[1]{\Sigma^1_{#1}\text{-}\mathsf{Coll}}
\newcommand{\DC}[1]{\Sigma^1_{#1}\text{-}\mathsf{DC}}
\newcommand{\Ztwo}{\mathsf{Z}_2}
\newcommand{\ZFCm}{\mathsf{ZFC}^{-}}
\newcommand{\N}{\mathbb{N}}
\newcommand{\Rfns}{\mathsf{Rfn}}
\newcommand{\Rfn}[1]{\Sigma^1_{#1}\text{-}\mathsf{Rfn}}
\newcommand{\inack}{\in_{\textsf{ack}}}

\title{L\'evy--Montague reflection is\\ $\Pi^1_1$-conservative over $\WKL_0$}
\author{Fedor Pakhomov\thanks{Department of Mathematics: Analysis, Logic
and Discrete Mathematics, Ghent University, Krijgslaan 281, B-9000
Ghent, Belgium; and Steklov Mathematical Institute of the Russian
Academy of Sciences, Moscow. \texttt{fedor.pakhomov@ugent.be}. The
work of the author was funded by the FWO grant G0F8421N.}}
\date{August 2026}

\begin{document}
\maketitle

\begin{abstract}
We study a L\'evy--Montague reflection scheme $\Rfns$ in second-order
arithmetic: for each formula $\varphi$, the scheme asserts that every
set belongs to a countable coded $\omega$-model such that $\varphi$ is
absolute, at all parameters from the model, between the model and the
universe. Our central result is a model extension construction: every
countable model of $\RCA$ can be extended, without changing its
first-order part, to a model of $\WKL_0$ together with the full
scheme $\Rfns$. It follows at once that $\WKL_0+\Rfns$ is
$\Pi^1_1$-conservative over both $\WKL_0$ and $\RCA$, that its
first-order part is exactly $\mathrm{I}\Sigma_1$, and that it is $\Pi^0_2$-conservative over $\mathsf{PRA}$.
The result opens an avenue for adopting, within a theory conservative over
$\mathsf{PRA}$, Feferman's $\mathsf{ZFC}$-formalization of universe-based category-theoretic
arguments that was achieved using L\'evy--Montague reflection.
The conservation proof itself, however, is non-finitary. The extension
is the union of an $\omega_1$-tower of forcing extensions, and its
uncountable cofinality is what secures reflection. We are only able
to prove the conservation in $\mathsf{PRA}+\textsf{1-Con}(\mathsf{Z}_2)$. The
results were obtained with extensive use of Anthropic's large language model Fable~5.
\end{abstract}

\medskip
\noindent\emph{Keywords:} reflection principles; weak K\"onig's lemma;
conservation theorems; second-order arithmetic; forcing.

\smallskip
\noindent\emph{MSC 2020:} 03F35; 03B30; 03F25; 03C62.

\section{Introduction}\label{sec:intro}

In well-known work, L\'evy and Montague \cite{Levy1960,Montague1961}
established that truth in the set-theoretic universe reflects to the
levels $V_\alpha$ of the cumulative hierarchy. The literature contains
several versions of this reflection principle. The strongest, and the
one relevant to the present work, is the reflection scheme with
absoluteness: for each formula $\varphi(\vec x)$ of the language of set
theory, $\mathsf{ZF}$ proves that there are arbitrarily large ordinals
$\alpha$ such that
\[
\forall\vec x\in V_\alpha\,\bigl(\varphi(\vec x)\leftrightarrow
\varphi^{V_\alpha}(\vec x)\bigr),
\]
where $\varphi^{V_\alpha}$ is the relativization of $\varphi$ to
$V_\alpha$.

This paper studies the natural adaptation of this scheme to
second-order arithmetic, with coded $\omega$-models taking the role of the
ranks $V_\alpha$. Working within systems of second-order arithmetic, a
\emph{coded $\omega$-model} is a set $Y$, regarded
as coding the $L_2$ structure whose sets are the sections $(Y)_y$ of
$Y$ and whose first-order part consists of the naturals of the ambient model.
Writing $X\dot\in Y$ for ``$X$ is a section of $Y$'' and $\varphi^Y$
for the relativization of the set quantifiers of $\varphi$ to the
sections of $Y$, the reflection scheme $\Rfns$ is
\[
\forall\vec P\,\exists Y\,\Bigl(\vec P\dot\in Y\ \wedge\
\forall\vec x\,\forall\vec X\dot\in Y\;
\bigl(\varphi(\vec P,\vec x,\vec X)\leftrightarrow
\varphi^Y(\vec P,\vec x,\vec X)\bigr)\Bigr):
\]
every tuple of parameters lies in a coded $\omega$-model such that
$\varphi$ is absolute, at all parameters from the model, between the
model and the universe. The instances with $\varphi$ restricted to 
$\Sigma^1_n$ formulas form the scheme $\Rfn{n}$.

Over a strong enough base the scheme is essentially known. Over
$\ACA$, where there are universal $\Sigma^1_n$-formulas, $\Rfn{n}$ is
straightforwardly equivalent to the statement that every set is contained
in a countable coded $\beta_n$-model. Here the $\beta_n$-models are the
$\omega$-models that are $\Sigma^1_n$-elementary submodels of the universe.
The $\beta_n$-reflection principles of this type have been studied
by Simpson in \cite[\S VII.7]{Simpson}. In particular, from the results there
it is immediate that $\Rfn{n}$ is equivalent to strong $\Sigma^1_n$
dependent choice (we give a proof of this in Theorem~\ref{thm:acalr}).
The $\Sigma^1_n$ choice and dependent choice schemes themselves go back
to Kreisel \cite{Kreisel1962,Kreisel1968}. The \emph{strong} dependent
choice scheme used here, the form with an exact equivalence to
$\Rfn{n}$, was introduced by Simpson \cite[\S VII.6]{Simpson}. For $n=1,2$
strong $\Sigma^1_n$ dependent choice is equivalent to $\Pi^1_n\text{-}\mathsf{CA}_0$
\cite[Theorem~VII.6.9]{Simpson}, see also \cite{Friedman1970}.
In general, however, strong
$\Sigma^1_n$ dependent choice is only $\Pi^1_4$-conservative over
$\Pi^1_n\text{-}\mathsf{CA}_0$ \cite[Theorem~VII.6.20]{Simpson}.
The plain choice schemes, for their part, carry classical
conservation theorems: $\Sigma^1_1\text{-}\mathsf{AC}_0$
is $\Pi^1_2$-conservative over
$\ACA$, by the recursively saturated model argument of Barwise and
Schlipf \cite{BarwiseSchlipf}, see also \cite[\S IX.4]{Simpson}; via
a similar construction it is easy to prove $\Pi^1_{n+2}$-conservativity of
$\Sigma^1_{n+1}\text{-}\mathsf{AC}_0$ over
$\Pi^1_{n}\text{-}\mathsf{CA}_0+\Sigma^1_n\text{-}\mathsf{AC}_0$, for $n\ge 1$.
A weaker form of the $\omega$-model reflection scheme is the one
asserting that for every true formula with fixed values of the parameters
there is an $\omega$-model, containing those parameters, in which the
formula holds. Over $\mathsf{ACA}_0$, full $\omega$-model reflection
is equivalent to the scheme of
bar induction, a classical theorem of Friedman \cite{Friedman1975}; see
also \cite[\S VIII.5]{Simpson}.
J\"ager and Strahm \cite{JagerStrahm} refined this to a level-by-level
correspondence between $\Pi^1_{n+2}$-$\omega$-model reflection and
$\Pi^1_n$ bar induction, and Pakhomov and Walsh \cite{PakhomovWalsh}
reduced $\omega$-model reflection over $\mathsf{ACA}_0$ to iterated syntactic reflection.

For the unsound consistent theory $\WKLs+\neg\ISig$ the scheme $\Rfns$ is simply provable
(Theorem~\ref{thm:wkllr}). This is a
quick consequence of the isomorphism theorem of Fiori-Carones,
Ko{\l}odziejczyk, Wong and Yokoyama \cite{FKWY}, by which truth
in a model of $\WKLs+\neg\ISig$ collapses into every coded
$\omega$-model of that theory. In that setting collection holds for all
$L_2$ formulas.

The heart of the paper is the intermediate regime, with the base theories 
being $\RCA$ or $\WKL_0$, which have $\Sigma^0_1$ induction but no
arithmetical comprehension. There the full scheme is unprovable
(Corollary~\ref{cor:unif}). However, curiously, $\WKL_0$ proves that
every set is contained in a countable coded \emph{strict $\beta$-model},
a coded $\omega$-model elementary
for $\Sigma^1_1$ formulas with $\Pi^0_1$ matrix; over $\WKL_0$, this
elementarity is moreover equivalent to satisfying $\WKL_0$. In this
precise form the result is due to Simpson
\cite[Theorems~VIII.2.2 and~VIII.2.6]{Simpson}; the phenomenon goes
back to Scott and Tennenbaum \cite{ScottTennenbaum}.

\begin{thmA}[Theorem~\ref{thm:lr}]
$\WKL_0+\Rfns$ is $\Pi^1_1$-conservative over $\RCA$; hence its
first-order part is exactly the set of
$\mathrm{I}\Sigma_1$-provable sentences.
\end{thmA}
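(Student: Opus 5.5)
The plan is to derive Theorem~A from a model-extension result: every countable model $(M,\mathcal S)\models\RCA$ has an extension $(M,\mathcal S^*)$ with the same first-order part, $\mathcal S\subseteq\mathcal S^*$, and $(M,\mathcal S^*)\models\WKL_0+\Rfns$. Granting this, conservativity is immediate. Suppose $\RCA\nvdash\forall X\,\theta(X)$ with $\theta$ arithmetical. Take a countable model $(M,\mathcal S)\models\RCA+\neg\theta(A)$ and extend it. Since $\theta(A)$ is arithmetical and $M$ is unchanged, $\neg\theta(A)$ still holds in $(M,\mathcal S^*)$. The claim about the first-order part then follows from the standard fact that $\RCA$ is $\Pi^1_1$-conservative over $\mathrm{I}\Sigma_1$: every model of $\mathrm{I}\Sigma_1$ expands to a model of $\RCA$ by adding the $\Delta^0_1$-definable sets.

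For the extension I would build a continuous increasing tower $(\mathcal S_\alpha)_{\alpha<\omega_1}$ of countable families over the fixed $M$, with $\mathcal S_0=\mathcal S$, each $(M,\mathcal S_\alpha)\models\RCA$, and unions at limits. At successor stages I would use forcing that adds no new elements to $M$. The first kind of step adds paths through infinite binary trees, by Harrington-style forcing with infinite subtrees as conditions, together with closure under $\Delta^0_1$-comprehension. This preserves $\mathrm{I}\Sigma^0_1$, and bookkeeping over $\omega_1$ stages handles every tree that appears, so the union satisfies $\WKL_0$. The second kind of step, which is what the tower is really for, adds a code $Y$ for the countable family $\mathcal S_\alpha$ (or for a suitable countable submodel), generically over $(M,\mathcal S_\alpha)$. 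Then $\mathcal S^*=\bigcup_{\alpha<\omega_1}\mathcal S_\alpha$.

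To verify $\Rfns$, fix $\varphi$ and parameters $\vec P\in\mathcal S^*$. I would use a Löwenheim--Skolem/closure argument that exploits uncountable cofinality. The set $C$ of $\alpha<\omega_1$ such that $(M,\mathcal S_\alpha)\preccurlyeq(M,\mathcal S^*)$ with respect to $\varphi$ (or all formulas) is club. The reason is that each stage is countable, so witnesses for the countably many instances with parameters in $\mathcal S_\alpha$ appear below some $\beta<\omega_1$, and iterating $\omega$ times closes off. Pick $\alpha\in C$ beyond the stage containing $\vec P$. Bookkeeping ensures that some later stage adds a code $Y\in\mathcal S^*$ whose sections are exactly $\mathcal S_\alpha$. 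Then $\varphi^Y$ agrees with $\varphi$ evaluated in $(M,\mathcal S_\alpha)$, which by elementarity agrees with $\varphi$ in $\mathcal S^*$ for all parameters from $Y$. This is also why $Y$ must code precisely the earlier stage: any later additions would break the match with $\mathcal S_\alpha$.

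The main obstacle is the successor step that adds a code $Y$ for the countable family $\mathcal S_\alpha$ while keeping $\mathrm{I}\Sigma^0_1$ and leaving $M$ unchanged. When $M$ is nonstandard, "countable" must mean enumerated inside the model by sections indexed by $M$. So I need a forcing whose generic, over the already-built model, produces a real with sections $\{(Y)_y:y\in M\}$ ranging exactly over $\mathcal S_\alpha$. The approach I would try first is to force with finite (in the sense of $M$) partial assignments of indices to sets in $\mathcal S_\alpha$, gluing the sections together. Genericity is then meant to give $\Sigma^0_1$-induction by the usual forcing-preserves-$\mathrm{I}\Sigma^0_1$ argument, exactly as in Cohen/Harrington forcing over $\RCA$. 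The delicate point is showing that a definable cut cannot be produced from $Y$ together with the ground sets. Failing that, I would force with coded $\omega$-models from $\mathcal S_\alpha$ as conditions, using that $\WKL_0$ proves the existence of strict $\beta$-models (Simpson) to get density.
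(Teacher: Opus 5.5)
Your architecture is the paper's. You build an $\omega_1$-tower over a fixed countable $M$, with an exact generic listing of $\mathcal S_\alpha$ and a Harrington extension at successor stages and unions at limits. Reflection comes from a L\"owenheim--Skolem/club argument in which the listing of an elementary stage serves as the reflecting $Y$, and conservativity comes from extending a countermodel. The club formulation, the bookkeeping for $\WKL_0$, and the derivation of Theorem~A from the extension result are fine.

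The gap is the step you yourself flag and then leave open. You never show that the generic listing $Y$ satisfies $\mathrm{I}\Sigma_1(Y)$, and this is the paper's main technical lemma (Lemma~\ref{lem:codingns}). Your worry about $Y$ ``together with the ground sets'' adds no burden: every ground set is a column of $Y$, so it suffices to take the $\Delta_1(Y)$-definable sets and prove $\mathrm{I}\Sigma_1(Y)$ alone.

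Appealing to ``the usual argument'' does not settle this. In Cohen forcing the conditions are numbers, so forcing a $\Pi_1$ formula is visibly $\Pi_1$; Harrington's argument exploits compactness of its tree conditions. Here a condition is an $M$-finite list of members of $\mathcal S_\alpha$, which for nonstandard lengths must be a single ground set $W_p$ with a length $a_p$. The forcing clauses for negation and implication quantify over all extensions $q\le p$, that is, over sets. So a priori $p\Vdash\forall w\,\theta(w)$ is not arithmetical in $W_p$, and $\Sigma^0_1$-induction in the ground model tells you nothing about it.

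The missing idea is a complexity computation specific to this forcing. For $\theta\in\Delta_0(Y)$, one shows that $p\Vdash\theta$ iff $\theta[Y{:=}s]$ holds for every coded finite set $s$ that agrees with the frozen columns of $p$ on all pairs below a bounding term $t_\theta$. The reason is that any such $s$ is realized by an extension of $p$ that freezes finitely many fresh columns as finite sets, and finite sets lie in the ground model. The quantifier over extensions thus collapses to a bounded number quantifier, so forcing a $\Pi_1(Y)$ sentence is $\Pi^0_1$ in $W_p$.

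The rest then goes through:
\begin{itemize}
\item Ground $\mathrm{I}\Pi^0_1$ with parameter $W_p$ makes every instance of $\mathrm{I}\Pi_1(Y)$ forced, by induction transfer (Lemma~\ref{lem:transfer}).
\item Classical deductive closure of the forced sentences (stability of atoms, the G\"odel--Gentzen reading of $\vee,\exists$) upgrades this to forced $\mathrm{I}\Sigma_1(Y)$.
\item The truth lemma makes it true in $(M,Y)$.
\end{itemize}
With this in hand your first approach works as stated, and the fallback via strict $\beta$-models is unnecessary. Without it, your proposal is a correct blueprint whose load-bearing step is unproved.
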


In particular the reflection scheme is $\Pi^1_1$-conservative over
$\WKL_0$ itself: a $\Pi^1_1$ theorem of
$\WKL_0+\Rfns$ is a theorem of $\RCA$, and a fortiori of $\WKL_0$.

Theorem~A both strengthens and borrows from Harrington's theorem that
$\WKL_0$ is $\Pi^1_1$-conservative over $\RCA$
\cite[\S IX.2]{Simpson}.  Since $\RCA$ is
$\Pi^0_2$-conservative over $\mathsf{PRA}$ \cite[\S IX.3]{Simpson},
Theorem~A also yields the $\Pi^0_2$-conservativity of $\WKL_0+\Rfns$
over $\mathsf{PRA}$ (Corollary~\ref{cor:pra}).

The comparison across bases thus reverses at each end, as the following
table summarizes.

\begin{center}
\begin{tabular}{l|l}
base & reflection scheme $\Rfns$\\
\hline
$\WKLs+\neg\ISig$ & provable (one witness for all formulas)\\[2pt]
$\RCA$, $\WKL_0$ & $\Pi^1_1$- but not $\Pi^1_2$-conservative; unprovable\\[2pt]
$\ACA$ & $\Rfn{n}\equiv$ strong $\Sigma^1_n$-DC, level by level\\
\end{tabular}
\end{center}

It is important to emphasize the connection of our conservation result
to the revised Hilbert program. Hilbert's original program asked for a finitistic
justification of infinitistic mathematics. Although G\"odel's incompleteness
theorems rule this out wholesale, they do not prevent partial results.
In the revision articulated by Simpson \cite{Simpson1988}, one adopts Tait's
identification of finitistic reasoning with $\mathsf{PRA}$
\cite{Tait1981} and asks which portions of infinitistic mathematics
are \emph{finitistically reducible}, that is, conservative over
$\mathsf{PRA}$ for $\Pi^0_2$ sentences. On such a portion the
infinitistic apparatus is a convenience that finitism itself can
certify. The
showcase of the program is $\WKL_0$. A substantial part of ordinary
mathematics is provable in it: the Heine--Borel covering lemma, the
Hahn--Banach theorem for separable spaces, the G\"odel completeness
theorem, and much else \cite{Simpson1988,Simpson}. At the same time, by
a theorem of Friedman, $\WKL_0$ is $\Pi^0_2$-conservative over
$\mathsf{PRA}$ \cite[\S IX.3]{Simpson}. The conservativity of $\WKL_0$
is what carries the foundational weight: it converts every
$\WKL_0$-proof of a $\Pi^0_2$ sentence, however infinitistic in
appearance, into a guarantee that the sentence is already a theorem of
$\mathsf{PRA}$.

Theorem~A places $\WKL_0+\Rfns$ in the same finitistically reducible
zone, and the addition of $\Rfns$ is not a vacuous one. 
It is unprovable not only in $\WKL_0$ but even in $\Pi^1_n\text{-CA}_0$, because
$\Rfn{n+1}$ implies $\Pi^1_{n+1}\text{-CA}_0$ over $\ACA$
(Theorem~\ref{thm:acalr} with \cite[\S VII.6]{Simpson}), and its new
consequences appear already at
the $\Sigma^1_1$ level (Corollary~\ref{cor:unif}).

Does the added strength meaningfully expand $\WKL_0$ as a framework
for mathematics, rather than merely exceed it in logical strength? We
have not examined this in any detail, but there is a concrete reason
to expect so. Reflective universes are the device by which Feferman
\cite{Feferman1969} gave conservative set-theoretic foundations for
the kind of category-theoretic reasoning that normally uses the axiom
of Grothendieck universes --- a typical application of this is
to embed what would otherwise be large categories as objects in other
categories. More precisely, he adjoined to $\mathsf{ZFC}$
a constant for a set $U$ with an axiom scheme asserting that $U$ reflects
the universe. Then the idea is that in order to prove facts about
large totalities such as the category of all groups it suffices to prove the
counterpart facts about the category of all $U$-groups, which is a set-sized
object. Theorem~A supplies a very similar reflection principle
over a finitistically reducible base. And Feferman's further reasoning
with the conservative extension by a constant $U$ can be replicated
(this is a straightforward argument based on the compactness theorem).
Though, of course, $\mathsf{WKL}_0+\Rfns$ is a far weaker theory than $\mathsf{ZFC}$ in many other important respects. Thus, we anticipate that when
applying this approach to the formalization of particular
sufficiently complex mathematical
arguments, extra difficulties will often surface,
related to uses of non-recursive comprehension,
or perhaps some uses of choice principles (beyond what $\mathsf{WKL}_0$
delivers), as well as the power-set axiom.

The proof of Theorem~A is a model extension construction: the
second-order part of a countable model of $\RCA$ is extended, keeping
the first-order part fixed, by an \emph{$\omega_1$-tower of forcing
extensions}. This is an increasing continuous $\omega_1$-chain of countable
sets $\mathcal S_\alpha$ of subsets of the fixed first-order part,
where for each $\alpha<\omega_1$ there is a set $X_\alpha\in \mathcal{S}_{\alpha+1}$
internally coding $\mathcal{S}_\alpha$ ($X_\alpha$ considered as
a code for an internally countable family of sets of internal naturals
enumerates precisely $\mathcal{S}_\alpha$).
This construction of the chain easily guarantees the validity
of the reflection principle in its union.

An important note here is about the meta-theoretic assumptions required to prove
this conservation result. While the assertion of conservativity
is a $\Pi^0_2$ statement, the argument passes through an uncountable object and uses
its uncountable cofinality essentially. Section~\ref{sec:onecon}
bounds the metamathematics: $\mathsf{PRA}$ together with the
$1$-consistency of full second-order arithmetic suffices.
Thus it remains open whether the conservation result of this
paper can be proved by finitistic (or at least relatively modest) means.
We discuss this further at the end of Section~\ref{sec:onecon}.

Theorem~A is sharp: the conservation does not extend to
$\Sigma^1_1$. Reflection refutes the \emph{uniform
enumerability} of the universe, the existence of a single formula
naming every set of the model by a number. Uniform enumerability by an
arithmetical formula is a $\Pi^1_1$ property, and it holds in the low
$\omega$-models of $\WKL_0$. So already $\Rfn{1}$ proves a
$\Sigma^1_1$ sentence unprovable in $\WKL_0$ (Theorem~\ref{thm:unif},
Corollary~\ref{cor:unif}).

\subsection{The use of AI in the research}
This paper grew out of an extended interaction between the author and
Anthropic's language model Fable~5, and it seems right to record how.
The proof of the central result is not technically demanding and the
real obstruction to finding it was rather that the statement looks too good
to be true. The author, personally, was not at all expecting a principle like that to
be conservative over $\WKL_0$ or $\RCA$.

In fact the starting problem for the AI-driven investigation
was to calibrate the strength of the collection scheme over
theories like $\RCA$ and $\WKL_0$. The author had long been aware
of the situation with the collection scheme over $\ACA$,
where it is level-by-level equivalent to the choice scheme $\mathsf{AC}$,
and of the fact that the collection scheme is provable in
$\WKLs+\neg\ISig$ using the results of \cite{FKWY}.

The route to the result is best described as genuinely collaborative:
the model made many investigative attempts in various directions,
proposing various constructions, while the human author supplied insight, direction,
and the repeated recontextualization of what the model proposed. Probably the
single most important contribution on the model's side was the idea of
building models of $\Sigma^1_1$-collection as $\omega_1$-chains of
extensions of the second-order part, each stage containing a universal
set for its predecessor. In retrospect this is unsurprising, in the
light of the known validity of collection in $\omega_1$-like nonstandard models of
arithmetic \cite{MillsParis,EnayatMohsenipour}.
The realization that the very same construction yields the much
stronger \emph{reflection} scheme, rather than $\Sigma^1_1$-collection alone, is due
to the author. The proof of the main technical
Lemma~\ref{lem:codingns} in its current form is due to the human author:
although the core high-level idea of the relevance of adding universal sets
by forcing arose as a result of a collaborative interaction,
the AI system was not able to correctly
deal with the technical nuances of the argument in an autonomous manner.
It is also worth mentioning that the observation of Corollary~\ref{cor:weakcons} as a consequence of the
central result of Section~\ref{sec:towers} and Section~\ref{sec:wkl} is entirely due to
the AI system.

As for the text: it is almost entirely written by AI models, but a
lot of granular, low-level editing requests have been made to bring
it to the author's own standards and preferences. The human author has also
carefully inspected all the content of the article, to
ensure that the standard expectations of quality, as well as his
personal standards, are met.

\subsection{Organization}
Section~\ref{sec:prelim} fixes notation, defines the reflection scheme,
and derives collection from reflection. Section~\ref{sec:aca} treats the
base $\ACA$ and the equivalence with strong dependent choice, and
Section~\ref{sec:wkl} treats the base $\WKLs+\neg\ISig$. These are the
two known endpoints. Section~\ref{sec:towers} builds the $\omega_1$-towers,
proves the central conservativity result, Theorem~A, extends the
conservation to the weak bases $\RCAs$ and $\WKLs$, and poses the open
problem of running the tower over them directly.
Section~\ref{sec:optimality} shows that the conservation is optimal:
it does not extend to $\Sigma^1_1$ sentences. Section~\ref{sec:onecon}
bounds the metamathematics of the proof of Theorem~A, ending with the
question of finitism.

\section{The reflection scheme}\label{sec:prelim}

We work in the two-sorted language $L_2$ of second-order arithmetic with
constants $0,1$, addition and multiplication function symbols and
a unary function symbol $\mathsf{exp}(x)$ for base-two exponentiation.
The classes $\Sigma^1_0=\Pi^1_0=\Pi^0_\infty$ (formulas without second-order quantifiers), $\Sigma^1_n$, $\Pi^1_n$ (prenex classes with second-order quantifiers and $\Pi^0_\infty$ matrices), $\Delta^0_0$ (formulas without second-order quantifiers and only bounded first-order quantifiers), and $\Sigma^0_n$, $\Pi^0_n$ (classes of formulas with first-order prenexes and $\Delta^0_0$-matrices) are defined in a standard manner using our signature.
For first-order languages --- plain arithmetic, or arithmetic
augmented by a set constant $X$, as in the forcing construction
below --- we write the corresponding classes with a single index:
$\Delta_0$, $\Sigma_n$, $\Pi_n$, and $\Delta_0(X)$, $\Sigma_n(X)$,
$\Pi_n(X)$, etc.; the double-indexed classes always refer to the
two-sorted language $L_2$.
As usual we denote by $\underline{n}$ the numerals for natural numbers $n$, i.e.\ the closed terms $\underline{0}=0$ and $\underline{n+1}=(\underline{n}+1)$.
The weakest theory that we consider
is the Simpson--Smith
\cite{SimpsonSmith} system $\mathsf{RCA}_0^*$ that is axiomatized over
Robinson's arithmetic $Q$ with the extra recursive defining axioms for $\mathsf{exp}(x)$
by the schemes of $\Delta^0_0$-induction and $\Delta^0_1$-comprehension (comprehension for properties
that are both $\Sigma^0_1$ and $\Pi^0_1$).

We fix a monotone primitive recursive pairing bijection
$\langle\cdot,\cdot\rangle\colon\N^2\to\N$ and write
$(X)_i=\{m:\langle i,m\rangle\in X\}$ for the $i$-th
section of $X$. All schemes apply
to formulas with arbitrary parameters.


We code $\omega$-models by their list of sections and relativize to it:

\begin{definition}\label{def:cl}
Write $X\dot\in Y$ for $\exists z\,((Y)_z=X)$, read ``$X$ is a section of
$Y$''. For an $L_2$ formula $\varphi$ write $\varphi^Y$ for the result of
relativizing all set quantifiers of $\varphi$ to the sections of $Y$;
when the free set variables are instantiated by sections, $\varphi^Y$ is
arithmetical with parameter $Y$. For a theory $T$ we write $T^Y$ for
the relativized assertion that the sections of $Y$ form a model of $T$.
We say \emph{$Y$ codes an $\omega$-model over $P_1,\dots,P_k$} if each
$P_i\dot\in Y$ (any finite number of parameters, including none). The
structure coded is the $L_2$ structure whose first-order part is that
of the ambient model and whose sets are the sections of $Y$. 
\end{definition}

\begin{definition}\label{def:lr}
For an $L_2$ formula $\varphi(\vec P,\vec x,\vec X)$, with set variables
$\vec P,\vec X$ and number variables $\vec x$, the reflection instance
$\Rfns_\varphi$ is the sentence
\[
\forall\vec P\,\exists Y\,\Bigl(\vec P\dot\in Y\ \wedge\
\forall\vec x\,\forall\vec X\dot\in Y\;
\bigl(\varphi(\vec P,\vec x,\vec X)\leftrightarrow
\varphi^Y(\vec P,\vec x,\vec X)\bigr)\Bigr),
\]
where $\vec P\dot\in Y$ abbreviates $\bigwedge_i P_i\dot\in Y$, the
parameters $\vec P$ form a tuple of arbitrary length, and the inner
$\forall\vec X\dot\in Y$ ranges over sections of $Y$ (so that
$\varphi^Y$, evaluated at sections, is arithmetical in $Y$). Thus $Y$
need only \emph{contain} the chosen $\vec P$ and \emph{agree} with the
ambient model on its own sections. The scheme $\Rfn{n}$ collects
$\Rfns_\varphi$ for all $\Sigma^1_n$ and $\Pi^1_n$ formulas $\varphi$;
the full \emph{reflection scheme} $\Rfns$ is the union
of the $\Rfn{n}$.
\end{definition}

For a finite set of formulas $\Phi=\{\varphi_i(\vec{P},\vec{x}_i,\vec{X}_i)\mid i<k\}$ we write $\Rfns_\Phi$ for the \emph{simultaneous} statement: one
$Y$ with $\vec P\dot\in Y$ that is $\varphi$-elementary for every
$\varphi\in\Phi$. Since reflecting $\varphi$ is the same as reflecting
$\neg\varphi$, if all $\varphi_i\in \Sigma^1_n \cup \Pi^1_n$ then by
switching some of them with their negations, we may assume that all of them
are in $\Sigma^1_n$. Also we may assume that the variable vectors $\vec{x}_i,\vec{X}_i$
are pairwise disjoint.
Then \[\psi(\vec{P},\vec{x}_0,\ldots,\vec{x}_{k-1},\vec{X}_0,\ldots,\vec{X}_{k-1},y):=\bigwedge_{i<k}(y=\underline{i}\to\varphi_i)\] is
logically equivalent to a $\Sigma^1_n$ formula, and $\Rfns_\psi$, read off at
$y=\underline{0},\dots,\underline{k-1}$, yields $\Rfns_\Phi$ at a single $Y$. Thus
$\Rfn{n}\vdash\Rfns_\Phi$ for every finite $\Phi$ consisting of $\Sigma^1_n$-formulas.

The scheme asks $Y$ only to contain $\vec P$ and to be elementary; it
does not ask the sections to model anything. It implies, however, its
own strengthened variant in which the model reflected on is moreover
required to satisfy any prescribed true $\Pi^1_{n+1}$ sentences.
Indeed, let $\pi$ be a $\Pi^1_{n+1}$ sentence, written as
$\forall\vec X\,\sigma(\vec X)$ with $\sigma\in\Sigma^1_n$, and let
$\Phi$ be a finite set of $\Sigma^1_n$ formulas. The instance
$\Rfns_{\Phi\cup\{\sigma\}}$ delivers a $Y$ over $\vec P$ that is both
$\Phi$-elementary and $\sigma$-elementary. If $\pi$ is true, then
$\sigma$ holds at every tuple of sections of $Y$, so by
$\sigma$-elementarity $\sigma^Y$ holds at every tuple of sections ---
and that is exactly $\pi^Y$. Thus $\Rfn{n}$ proves
\[
\pi\ \to\ \forall\vec P\,\exists Y\,\bigl(\vec P\dot\in Y\ \wedge\
\text{$Y$ is $\Phi$-elementary}\ \wedge\ \pi^Y\bigr).
\]
In particular, already $\Rfn{1}$ makes any given true $\Pi^1_2$
sentence hold in the models on which we reflect; hence any theory
axiomatized by finitely many $\Pi^1_2$ sentences, once it holds in
the ambient model, may be assumed to hold in every $Y$ the scheme
delivers. Likewise, $\Rfn{n}$ secures any theory axiomatized by
finitely many true $\Pi^1_{n+1}$ sentences. This covers $\RCAs$,
$\RCA$, $\WKLs$, $\WKL_0$ and $\ACA$: each of these theories is
finitely axiomatizable by $\Pi^1_2$-sentences (for $\RCA$, $\WKL_0$ and $\ACA$ see
\cite[\S\S VIII.1--VIII.2]{Simpson}; the same
universal-$\Sigma^0_1$-formula arguments work for $\RCAs$ and
$\WKLs$, with the induction axioms handled by the finite
axiomatizability of $\mathrm{I}\Delta_0(\mathsf{exp})=\mathsf{EA}$
\cite{GaifmanDimitracopoulos}). Hence, whichever of them serves as the base
theory, we may always assume that $Y$ satisfies it. In particular,
if the base theory proves $\varphi\leftrightarrow\varphi'$, the
equivalence relativizes to any such $Y$, so a $Y$ delivered for
$\varphi'$ is then $\varphi$-elementary as well.

As in set theory, reflection proves collection. The scheme $\Coll{n}$
asserts, for $\Sigma^1_n$ formulas $\varphi(x,Y)$ perhaps with extra free variables:
\[\forall x\,\exists Y\,\varphi(x,Y)\to\exists X\,\forall x\,\exists
y\,\varphi(x,(X)_y).\]

\begin{proposition}\label{prop:rfncoll}
Over $\RCAs$, each instance $\Rfns_\Phi$ implies, for every $\varphi(x,Z)$
with $\varphi,\exists Z\,\varphi\in\Phi$, the strong collection statement
$\exists X\,\forall x\,(\exists Z\,\varphi(x,Z)\to\exists z\,\varphi(x,(X)_z))$.
In particular $\Rfn{n}$ implies $\Coll{n}$ for every $n\ge1$, and $\Rfns$
implies collection for all $L_2$ formulas.
\end{proposition}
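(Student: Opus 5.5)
The plan is to take $X:=Y$, where $Y$ is the witness supplied by $\Rfns_\Phi$ for the given parameters. All free variables of $\varphi(x,Z)$ other than $x$ and $Z$ are placed among the parameters $\vec P$; the reflected variables are $\vec x,\vec X$. Then $Y$ contains $\vec P$ as sections and is elementary for both $\varphi$ and $\exists Z\,\varphi$.

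Now fix $x$ with $\exists Z\,\varphi(x,Z)$ true in the universe. The formula $\exists Z\,\varphi$ lies in $\Phi$, and its free set variables are all in $\vec P$ and hence are sections of $Y$. Elementarity therefore gives $(\exists Z\,\varphi)^Y(x)$. Spelled out, this says that there is a $z$ with $\varphi^Y(x,(Y)_z)$. Next apply elementarity for $\varphi$ itself, with the section $(Y)_z$ in the role of $Z$. This turns $\varphi^Y(x,(Y)_z)$ back into $\varphi(x,(Y)_z)$ in the universe, which is the required statement.

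The one point to check is that the relativization $(\exists Z\,\psi)^Y$ is literally $\exists z\,\psi^Y[(Y)_z/Z]$. This holds by Definition~\ref{def:cl}, once a section variable is read as "some $z$". Sections exist by $\Delta^0_1$-comprehension in $\RCAs$, since $(Y)_z$ is $\Delta^0_0$ in $Y$ and $z$. No induction is used, so the argument goes through over $\RCAs$.

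For the "in particular" claims, take $\varphi\in\Sigma^1_n$. Then $\exists Z\,\varphi$ is also $\Sigma^1_n$, because the existential set quantifier merges into the leading block of a prenex $\Sigma^1_n$ formula (for $n\ge1$). The earlier remark that $\Rfn{n}\vdash\Rfns_\Phi$ for finite $\Phi$ then applies to $\Phi=\{\varphi,\exists Z\,\varphi\}$. Under the hypothesis $\forall x\,\exists Z\,\varphi$, strong collection yields $\exists X\,\forall x\,\exists y\,\varphi(x,(X)_y)$, which is $\Coll{n}$. For arbitrary $L_2$ formulas, the two formulas are handled by a single instance of $\Rfns_\Phi$ using the same $\psi$-combination trick. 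That trick works for arbitrary formulas because $\Rfns$ includes all of them.

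I do not expect any step to be a real obstacle. The only subtlety is bookkeeping: the extra free number variables of $\varphi$ must be treated as reflected variables $\vec x$ rather than parameters, and this is harmless because reflection quantifies over all $\vec x$.
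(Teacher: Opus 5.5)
Your proof is the paper's: take the reflecting $Y$ itself as the collecting set, use elementarity for $\exists Z\,\varphi$ to find a witness among the sections $(Y)_z$, and then use elementarity for $\varphi$ to transfer $\varphi^Y(x,(Y)_z)$ back to $\varphi(x,(Y)_z)$. The paper is more careful only at the level count: it treats $\exists Z\,\varphi$ as merely $\RCAs$-provably equivalent to a $\Sigma^1_n$ formula (by contraction of like set quantifiers) and therefore also arranges $\RCAs^Y$, as the remark after Definition~\ref{def:lr} allows, so that this equivalence relativizes to $Y$ --- your claim that the new quantifier ``merges into the leading block'' is literally correct only if $\Sigma^1_n$ admits blocks of like quantifiers, and otherwise needs exactly this extra step.
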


\begin{proof}
Let $\vec Q$ be the parameters of $\varphi$ and apply $\Rfns_\Phi$ with
parameters $\vec Q$, obtaining $Y_0$ with the $\vec Q$ among its
sections. Fix $a$ with
$\exists Z\,\varphi(a,Z,\vec Q)$.
By elementarity for $\exists Z\,\varphi$ there is $z$ with
$\varphi^{Y_0}(a,(Y_0)_z,\vec Q)$, and by elementarity for $\varphi$
this gives $\varphi(a,(Y_0)_z,\vec Q)$.
For the level count: by contraction of like set quantifiers,
$\exists Z\,\varphi$ is $\RCAs$-provably equivalent to a $\Sigma^1_n$
formula, and $Y_0$ may be assumed to satisfy $\RCAs$, as above, so
elementarity for that formula yields elementarity for
$\exists Z\,\varphi$.
\end{proof}

\section{Reflection over $\ACA$: reflection is dependent
choice}\label{sec:aca}

For an $L_2$ formula $\eta(n,X,Y)$ write
$(Z)_{<n}:=\{\langle i,m\rangle\in Z : i<n\}$ for the restriction
of $Z$ to its columns below $n$. The \emph{strong} dependent choice scheme
strong $\Gamma\text{-}\mathsf{DC}$ consists, for $\eta\in\Gamma$, of the
statements
\[
\exists Z\,\forall n\,\bigl(\exists
Y\,\eta\bigl(n,(Z)_{<n},Y\bigr)\to
\eta\bigl(n,(Z)_{<n},(Z)_n\bigr)\bigr);
\]
strong $\DC{k}$ abbreviates strong $\Sigma^1_k\text{-}\mathsf{DC}$. 
Trivially, over $\ACA$ this is equivalent to the formulation from
\cite[\S VII.6]{Simpson}.

\begin{theorem}\label{thm:acalr}
Over $\ACA$, for every $n\ge1$ the scheme $\Rfn{n}$ is equivalent to
strong $\DC{n}$.
\end{theorem}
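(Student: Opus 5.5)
We prove the two directions separately, working in $\ACA$ throughout. The tool that makes both directions short is a universal $\Sigma^1_n$ formula over $\ACA$: a single $\Sigma^1_n$ formula $\sigma_n(e,x,X)$ such that every $\Sigma^1_n$ formula $\varphi(x,X)$ is $\ACA$-provably equivalent to $\sigma_n(\underline{e},x,X)$ for a suitable $e$. Here number and set parameters are coded into single ones, and we use the Kleene normal form of arithmetical matrices available in $\ACA$.

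\emph{Strong $\DC{n}$ implies $\Rfn{n}$.} By the remarks after Definition~\ref{def:lr}, reflecting a $\Pi^1_n$ formula is the same as reflecting its negation, so it suffices to produce, for given $\vec P$, a coded $Y$ containing $\vec P$ that is $\sigma_n$-elementary. Such a $Y$ is then $\varphi$-elementary for every $\Sigma^1_n$ formula $\varphi$, once we also ensure $\ACA^Y$, so that the equivalence $\varphi\leftrightarrow\sigma_n(\underline e,\dots)$ relativizes.

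\begin{itemize}
\item We apply strong $\DC{n}$ to a $\Sigma^1_n$ formula $\eta(m,W,V)$. Reading $m=\langle e,x,i,j\rangle$ through a surjective enumeration, $\eta$ says: if $m=0$, then $V$ is the join of $\vec P$; otherwise $V$ is a witness to $\sigma_n(e,x,(W)_i\oplus (W)_j)$, where $\sigma_n$ is recast so that its leading existential set quantifier is exposed as the witness $V$. We also include steps that put the Turing jump of $(W)_i$ into $V$, so that the resulting model satisfies $\ACA$.
\item By choosing the dovetailing carefully, every pair of columns $i,j<m$ and every $e,x$ recur at some later stage. Let $Z$ be the set given by strong $\DC{n}$, and let $Y$ code the columns of $Z$.
\item Elementarity of $Y$ for $\sigma_n$ is proved by induction along the quantifier prefix, Tarski--Vaught style. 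An existential set quantifier that is true in the universe gets a witness among the columns, by the strong $\DC{}$ clause. The remaining first-order and lower-level parts are handled by the inductive hypothesis applied to $\Sigma^1_{n-1}$ and $\Pi^1_{n-1}$ subformulas, which $\eta$ also treats.
\end{itemize}

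To keep the induction clean, we instead let $\eta$ range over all $\Sigma^1_k$ subformula-witnessing tasks for $k\le n$ simultaneously; each is $\Sigma^1_n$, so a single instance of strong $\DC{n}$ suffices. The main obstacle is this bookkeeping: the inductive Tarski--Vaught argument must go through with a fixed finite set of universal formulas.

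\emph{$\Rfn{n}$ implies strong $\DC{n}$.} Given $\eta\in\Sigma^1_n$ with parameters $\vec P$, we apply $\Rfns_\Phi$ with $\Phi=\{\eta,\exists Y\,\eta\}$ together with the arithmetical formulas needed. This gives a $Y_0\ni\vec P$ satisfying $\ACA^{Y_0}$ and elementary for $\exists Y\,\eta(n,W,Y)$ and for $\eta$, at parameters from $Y_0$. We then build $Z$ arithmetically in $Y_0$ by primitive recursion on $n$. Since the sections of $Y_0$ are closed under joins and $\ACA^{Y_0}$ holds, we take $(Z)_{<n}$ to be a section of $Y_0$ with a known index. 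At stage $n$ we let $(Z)_n$ be the first section $V$ of $Y_0$ with $\eta^{Y_0}(n,(Z)_{<n},V)$ if one exists, and $\emptyset$ otherwise.

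This is an arithmetical recursion in $Y_0$, which needs only the jump hierarchy up to level $n$ (since $\eta^{Y_0}$ is arithmetical in $Y_0$), so $Z$ exists in $\ACA$. The one subtle point is showing that each $(Z)_{<n}$ is a section of $Y_0$. We prove this by $\Sigma^0_1$/arithmetical induction on $n$: the index of $(Z)_{<n}$ is obtained from finitely many section indices via the join closure of $Y_0$, using $\ACA^{Y_0}$. Now, if $\exists Y\,\eta(n,(Z)_{<n},Y)$ holds, then elementarity gives such a $Y$ among the sections of $Y_0$ satisfying $\eta^{Y_0}$. Hence $(Z)_n$ satisfies $\eta^{Y_0}$, and by elementarity it satisfies $\eta$, as required.
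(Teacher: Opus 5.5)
Your derivation of strong $\DC{n}$ from $\Rfn{n}$ is essentially the paper's. You reflect $\eta$ and $\exists Y\,\eta$, secure $\ACA^{Y_0}$, and run a recursion arithmetical in $Y_0$ that carries the index of the history $(Z)_{<m}$ together with the index of the chosen column. You then check by induction that $(Z)_{<m}$ is the indexed section. A few statements need tightening:
\begin{enumerate}
\item $Z$ exists because the step rule has fixed, standard arithmetical complexity in $Y_0$. So the graph of the recursion (``some coded finite sequence obeys the rule and ends here'') is arithmetical. The justification should not be a jump hierarchy ``up to level $n$'' with $n$ doubling as the stage variable, since $\ACA$ does not supply $n$-th jumps for nonstandard $n$.
\item The next history index comes from the current one and the chosen column by a single $\Delta^0_1$ step, not from ``finitely many'' indices at once.
\item $\exists Y\,\eta$ is $\Sigma^1_n$ only after contracting quantifiers, so reflect the contracted form.
\item $\ACA^{Y_0}$ comes from reflecting the $\Sigma^1_1$ matrices of a $\Pi^1_2$ axiomatization of $\ACA$. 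Reflecting arithmetical formulas adds nothing, since they are absolute.
\end{enumerate}

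For the converse you take a genuinely different route. The paper works one target formula $\varphi^*$ at a time. It lists witnessing tasks for the finitely many set-quantified subformulas of $\varphi^*$ and feeds all their substitutional variants, with earlier columns as set parameters, into a single $\eta$. Elementarity then follows by external induction on subformulas, and the resulting $Z$ need satisfy no axioms at all.

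You instead make $Y$ elementary for a universal $\Sigma^1_n$ formula $\sigma_n$, and transfer this to each $\varphi$ through $\ACA\vdash\varphi\leftrightarrow\sigma_n(\underline e,\dots)$. This buys more: a single coded $\beta_n$-model over $\vec P$, i.e.\ the equivalence with ``every set lies in a countable coded $\beta_n$-model'' mentioned in the introduction. The price is the normal-form theorem and $\ACA^Y$. Without $\ACA^Y$ the equivalences do not relativize, and joins or components of sections need not be sections.

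Your bookkeeping for $\ACA^Y$ is insufficient as stated. Adding jumps of single columns does not close the sections under $\oplus$ or under $\Delta^0_1$ definability. The repair is easy: also include, at all earlier columns, witnessing tasks for the $\Pi^1_2$ axioms of $\ACA$. Examples are ``$V$ is the jump of $(W)_i\oplus(W)_j$'' and ``$V$ is the set $\Delta^0_1$-defined from $(W)_i$ by a given pair of indices, if these define one.'' These tasks are arithmetical and hence admissible in $\eta$. With this fix your Tarski--Vaught argument over the finitely many subformulas of $\sigma_n$ goes through.
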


\begin{proof}
\emph{Reflection $\Rightarrow$ dependent choice.} Fix $\eta\in\Sigma^1_n$
with set parameters $\vec Q$; by contraction of like set quantifiers,
$\exists Y\,\eta$ is $\ACA$-provably equivalent to a $\Sigma^1_n$
formula $\chi$. Apply $\Rfns_\Phi$ with parameters $\vec Q$, where
$\Phi=\{\eta,\chi\}$. This yields $Y_0$ with $\vec Q$ among its
sections and with elementarity for $\eta$ and $\chi$. Taking the
comprehension instances along, as after Definition~\ref{def:lr}, we
may assume $\ACA^{Y_0}$: then $\emptyset$ is a section, every set
$\Delta^0_1$-definable from sections is again a section, and the
contraction equivalence relativizes, so $Y_0$ is elementary for
$\exists Y\,\eta$ as well. Write
$\eta^{Y_0}(m,w,z)$ for the relativization of $\eta$ evaluated at the
sections $(Y_0)_w,(Y_0)_z$; this formula is arithmetical in $Y_0$.

We now pick out a choice sequence for $\eta$ from among the sections
of $Y_0$. Define pairs $\langle w_m,z_m\rangle$ by recursion on $m$,
with $w_m$ the index of the history accumulated before step $m$ and
$z_m$ the index of the column chosen at step $m$:
\begin{align*}
w_0 &:= \text{the least $w$ such that } (Y_0)_w=\emptyset;\\
z_m &:= \text{the least $z$ such that $\eta^{Y_0}(m,w_m,z)$,
        if such a $z$ exists,}\\
        &\hphantom{{}:={}}\text{and otherwise the least $z$ such that
        $(Y_0)_z=\emptyset$;}\\
w_{m+1} &:= \text{the least $w$ such that $(Y_0)_w$ is $(Y_0)_{w_m}$
        extended by $(Y_0)_{z_m}$ as column $m$.}
\end{align*}
The set required of $w_{m+1}$ is $\Delta^0_1$-definable from the
sections $(Y_0)_{w_m}$ and $(Y_0)_{z_m}$ and is therefore a section;
since $\emptyset$ is a section too, the minimizations are nonvacuous.

The pair $\langle w_{m+1},z_{m+1}\rangle$ is produced from
$\langle w_m,z_m\rangle$ by a rule that is arithmetical in $Y_0$, and
a function defined by primitive recursion from an arithmetical rule
is itself arithmetical: its graph is expressed by ``there is a coded
finite sequence of pairs that starts with $\langle w_0,z_0\rangle$,
obeys the rule at every step, and ends with $\langle
w_m,z_m\rangle$''. Hence the graph of $m\mapsto\langle
w_m,z_m\rangle$ is arithmetical in $Y_0$.

By arithmetical comprehension we may therefore form the set
\[
Z:=\{\langle m,k\rangle: k\in (Y_0)_{z_m}\},
\]
that is, the set with $(Z)_m=(Y_0)_{z_m}$ for all $m$; induction on
$m$ gives $(Z)_{<m}=(Y_0)_{w_m}$. We claim
that $Z$ is as required by strong $\DC{n}$. Suppose
$\exists Y\,\eta(m,(Z)_{<m},Y)$ holds. Since $(Z)_{<m}$ is the
section $(Y_0)_{w_m}$, elementarity for $\exists Y\,\eta$ gives
$\exists z\,\eta^{Y_0}(m,w_m,z)$. So the first case in the definition
of $z_m$ applied, and $\eta^{Y_0}(m,w_m,z_m)$ holds.
Elementarity for $\eta$ converts this into
$\eta(m,(Z)_{<m},(Z)_m)$, as required. Finally, $\eta$ and $\chi$ are
$\Sigma^1_n$, so the instance of reflection used lies in $\Rfn{n}$.

\emph{Dependent choice $\Rightarrow$ reflection.} Fix a target
$\Sigma^1_n$ or $\Pi^1_n$ formula $\varphi^*$ and parameters
$P_0,\dots,P_{k-1}$ (note that without loss of generality we may assume that $k\ge 1$).
We build a single $Z$ with each $P_l\dot\in Z$ that
is elementary for $\varphi^*$.

In the metatheory $\varphi^*$ has finitely many subformulas. Let
$\varphi_1,\dots,\varphi_s$ enumerate all $\varphi$ such that
$\exists Y\,\varphi$ is a subformula of $\varphi^*$, together with all
$\neg\varphi$ such that $\forall Y\,\varphi$ is a subformula of
$\varphi^*$. We note
that up to logical equivalence each $\varphi_i$ is a $\Sigma^1_n$
formula, and we tacitly work with these $\Sigma^1_n$ forms.

Reason in $\ACA+{}$strong $\DC{n}$. Fix an arithmetical enumeration
$m\mapsto(i_m,\vec a_m,\vec C_m)$, defined for $m\ge k$, of all possible
\emph{substitutional variants}
$\exists Y\,\varphi_{i_m}(Y,\vec a_m,\vec C_m)$: here $i_m\le s$, the
entries of $\vec a_m$ are numbers, and $\vec C_m$ is a tuple of set
constants drawn from $C_0,C_1,\dots$. The
enumeration is subject to the condition that $C_j$ occurs in
$\vec C_m$ only for $j< m+k$. 

Let $\eta(m,X,Y)$ say: either $m<k$ and $Y=P_{m}$, or else
$m\ge k$ and $\varphi_{i_{m-k}}(Y,\vec a_{m-k},\vec C_{m-k}^X)$, where
$\vec C_{m-k}^X$ is the result of replacement of each $C_j$ with $(X)_j$.
The resulting formula $\eta$ is clearly equivalent to a $\Sigma^1_n$ formula.
Apply strong $\DC{n}$ to
$\eta$, obtaining $Z$ with, for every $m$,
\[
\exists Y\,\eta\bigl(m,(Z)_{<m},Y\bigr)\ \longrightarrow\
\eta\bigl(m,(Z)_{<m},(Z)_m\bigr).
\]
That is, we get $Z$ such that $(Z)_m=P_m$ for $m<k$ and for $m\ge k$ we get that $(Z)_m$ witnesses  $\exists Y\; \varphi_{i_{m-k}}(Y,\vec a_{m-k},\vec C_{m-k}^Z)$ if it is true.
Thus, due to our choice of the enumeration, for any substitution in any of the $\varphi_i$
of (internal) naturals for the first-order variables and of $(Z)_m$'s for the second-order variables,
if the substituted formula is true, then there is a witness for it among the $(Z)_m$'s.

\emph{Elementarity.} By external induction on the subformulas $\chi$ of
$\varphi^*$ we show $\chi(\vec a,\vec S)\leftrightarrow
\chi^Z(\vec a,\vec S)$ for all numbers $\vec a$ and all vectors $\vec S$ of sections of $Z$. Atomic and arithmetical $\chi$ are absolute; Boolean connectives
and number quantifiers commute with relativization, by the induction
hypothesis. Let $\chi=\exists Y\,\varphi_i$ and suppose
$\chi(\vec a,\vec S)$ with $\vec S$ sections. By the property of $Z$ proven above,
there is a section $D$ of $Z$ that witnesses
$\exists Y\,\varphi_i(Y,\vec a,\vec S)$ if it is true. Thus we have the desired equivalence.
The case of a universally quantified formula is completely analogous.
\end{proof}

\section{Reflection over $\WKLs+\neg\ISig$}\label{sec:wkl}
The isomorphism theorem of Fiori-Carones, Ko{\l}odziejczyk,
Wong and Yokoyama \cite[Theorem~2.1]{FKWY} states that any two coded
$\omega$-models of $\WKLs+\neg\ISig$ over the same countable
first-order universe are isomorphic. Its model-completeness corollary
\cite[Corollary~3.3]{FKWY} is that $\WKLs+\neg\ISig$ proves, for every
$L_2$ formula $\varphi(\vec x,\vec Y)$ without extra free variables, the two equivalences
\begin{align}
\varphi(\vec x,\vec X)\;&\leftrightarrow\;\exists Y \bigl( \vec{X}\dot\in Y\land (\WKLs\land\neg\ISig)^Y\land \varphi^Y(\vec{x},\vec{X})\bigr),
\tag{$\ast_\exists$}\\
\varphi(\vec x,\vec X)\;&\leftrightarrow\;\forall Y \bigl( (\vec{X}\dot\in Y\land (\WKLs\land\neg\ISig)^Y)\to \varphi^Y(\vec{x},\vec{X})\bigr).
\tag{$\ast_\forall$}
\end{align}

It is easy to see that together the two equivalences
in particular entail the L\'evy--Montague reflection
principle:

\begin{theorem}\label{thm:wkllr}
$\WKLs+\neg\ISig$ proves the reflection scheme $\Rfns$.
\end{theorem}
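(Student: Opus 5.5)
The plan is to take as the reflecting model any coded $\omega$-model of $\WKLs+\neg\ISig$ that contains the parameters. The two equivalences $(\ast_\exists)$ and $(\ast_\forall)$ then give absoluteness of $\varphi$ directly. Concretely, work in $\WKLs+\neg\ISig$, fix $\varphi(\vec P,\vec x,\vec X)$ and parameters $\vec P$.

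\textbf{Step 1: find a model containing $\vec P$.} We need some $Y$ with $\vec P\dot\in Y$ and $(\WKLs\land\neg\ISig)^Y$. One way is to instantiate $(\ast_\exists)$ at a true formula. Take the formula $\top$, or better $X_0=X_0$ with the tuple $\vec P$ as its set variables. Then $(\ast_\exists)$ immediately yields such a $Y$.

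A more direct route is to use the standard fact that $\WKLs$ proves every set lies in a coded $\omega$-model of $\WKLs$. This is the strict $\beta$-model construction from \cite{Simpson}, which works over $\WKLs$. Then $\neg\ISig$ relativizes automatically, since a failure of $\Sigma^0_1$ induction in the ground model with parameter $A$ persists in any $Y$ containing $A$. The first route is cleaner and uses only what is stated, so I will use it.

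\textbf{Step 2: absoluteness at all parameters from $Y$.} Fix $\vec x$ and sections $\vec X\dot\in Y$. We must show $\varphi(\vec P,\vec x,\vec X)\leftrightarrow\varphi^Y(\vec P,\vec x,\vec X)$, and we apply the equivalences to $\varphi$ with all of $\vec P,\vec X$ treated as free set variables.
\begin{itemize}
\item If $\varphi$ holds, then by $(\ast_\forall)$ its relativization holds in every coded model of $\WKLs+\neg\ISig$ containing $\vec P,\vec X$. This includes $Y$, so $\varphi^Y$ holds.
\item Conversely, if $\varphi^Y$ holds, then $Y$ itself witnesses the right-hand side of $(\ast_\exists)$, so $\varphi$ holds.
\end{itemize}
Since this is uniform in $\vec x,\vec X$, it gives the required universally quantified biconditional, and the result is exactly the instance $\Rfns_\varphi$.

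\textbf{Main obstacle.} The point I expect to need care is that $(\ast_\exists)$ and $(\ast_\forall)$ are schemes stated for formulas ``without extra free variables''. We must make sure they apply with $\vec P$ as well as $\vec X$ among the free set variables. This is fine if we simply regard the combined tuple $(\vec P,\vec X)$ as the displayed $\vec X$ of the scheme.

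We must also check that "$(\vec P,\vec X)\dot\in Y$" matches the relativization. Here $\varphi^Y$, evaluated at sections, agrees with the structure coded by $Y$, since the first-order part of $Y$ is the ambient one. Everything else is immediate.
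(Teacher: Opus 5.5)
Your proof is correct and follows the paper's argument. You obtain $Y$ with $\vec P\dot\in Y$ and $(\WKLs\land\neg\ISig)^Y$ from $(\ast_\exists)$, then derive $\varphi\to\varphi^Y$ from $(\ast_\forall)$ at the combined tuple $(\vec P,\vec X)$. The only cosmetic difference is the converse: you use $(\ast_\exists)$ with $Y$ itself as the witness, whereas the paper applies $(\ast_\forall)$ to $\neg\varphi$, and these amount to the same step.
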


\begin{proof}
  We reason in $\WKLs+\neg\ISig$. We are given $\varphi(\vec{P},\vec{x},\vec{X})$
  and claim that there is $Y$ such that $\vec{P}\dot\in Y$ and $\forall \vec{x},\vec{X} (\varphi(\vec{P},\vec{x},\vec{X})\mathrel{\leftrightarrow} \varphi^Y(\vec{P},\vec{x},\vec{X}))$.

  By $\ast_\exists$ there is $Y$ that contains $\vec{P}$ among its sections and such that $(\WKLs\land\neg\ISig)^Y$. By $\ast_\forall$ applied to $\varphi$, we have that $\varphi(\vec{P},\vec{x},\vec{X})\to \varphi^Y(\vec{P},\vec{x},\vec{X})$ for all $\vec{x}$ and $\vec{X}\dot\in Y$. At the same time by the same principle applied to $\lnot\varphi$, we have $\lnot \varphi(\vec{P},\vec{x},\vec{X})\to \lnot \varphi^Y(\vec{P},\vec{x},\vec{X})$ for all $\vec{x}$ and $\vec{X}\dot\in Y$. Thus we have the desired instance of $\Rfns$.
\end{proof}

\begin{corollary}\label{cor:wkllrcons}
$\RCAs+\neg\ISig$ together with $\Rfns$ is $\Pi^1_1$-conservative over $\RCAs+\neg\ISig$.
\end{corollary}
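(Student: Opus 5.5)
Given a countable model of $\RCAs+\neg\ISig$, we will extend it to a model of $\WKLs+\neg\ISig$ that has the same first-order part and is $\omega$-extending in the relevant sense. Theorem~\ref{thm:wkllr} then shows that this extension satisfies $\Rfns$.

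Suppose a $\Pi^1_1$ sentence $\forall X\,\theta(X)$, with $\theta$ arithmetical, is provable in $\RCAs+\neg\ISig+\Rfns$ but not in $\RCAs+\neg\ISig$. By completeness there is a countable model $(M,\mathcal S)\models\RCAs+\neg\ISig$ and some $A\in\mathcal S$ with $\neg\theta(A)$. The plan is to invoke the known conservation theorem that $\WKLs$ is $\Pi^1_1$-conservative over $\RCAs$ (Simpson--Smith), in its model-theoretic form. That form says every countable model $(M,\mathcal S)$ of $\RCAs$ has an extension $(M,\mathcal S')\models\WKLs$ with $\mathcal S\subseteq\mathcal S'$ and the same first-order part $M$. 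It is obtained by iteratively adding paths through infinite trees via the low basis argument, or via a Harrington-style forcing, which in the $\RCAs$ setting uses only $\Delta^0_0$-induction and exponentiation.

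Two properties of this extension carry the argument. First, $\neg\ISig$ is preserved. It is a property involving a failure of $\Sigma^0_1$-induction witnessed by some set parameter $B\in\mathcal S$, or by the first-order part alone. That failure persists in $(M,\mathcal S')$, because the first-order part $M$ and the set $B$ are unchanged and the formula is arithmetical, hence absolute. Second, $\neg\theta(A)$ still holds in $(M,\mathcal S')$, since $\theta$ is arithmetical and $A\in\mathcal S'$.

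Consequently $(M,\mathcal S')\models\WKLs+\neg\ISig$. By Theorem~\ref{thm:wkllr}, $(M,\mathcal S')\models\Rfns$. But $(M,\mathcal S')$ also satisfies $\neg\forall X\,\theta(X)$, which contradicts the assumed provability.

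The main obstacle is the extension step: obtaining an $\omega$-extension to $\WKLs$ over an arbitrary countable model of $\RCAs$, rather than merely the conservation statement. I would cite Simpson--Smith, whose proof is exactly such a model construction. Alternatively, I would redo it by adjoining, one at a time, generic paths through each infinite binary tree, taking unions over countably many steps. The only point to check is that $\Delta^0_0$-induction and $\Delta^0_1$-comprehension survive each step, and this is the standard forcing-preservation argument. Everything else in the proof is immediate.
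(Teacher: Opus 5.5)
Your argument is correct and is essentially the paper's. The paper notes that $\RCAs+\neg\ISig+\Rfns$ is contained in $\WKLs+\neg\ISig$ by Theorem~\ref{thm:wkllr}, and then applies the Simpson--Smith $\Pi^1_1$-conservativity of $\WKLs$ over $\RCAs$; your extension over a fixed first-order part, with the explicit check that the failures of $\ISig$ and of $\theta(A)$ persist, is just the semantic form of that step. One caution about your fallback of redoing Simpson--Smith by hand: the real work there is preserving $\Sigma^0_1$ bounding relative to each new path (so that the $\Delta^0_1$-closure still satisfies $\Delta^0_0$ induction), not a routine check of $\Delta^0_0$ induction and $\Delta^0_1$ comprehension, so citing the theorem is the right choice.
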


\begin{proof}
By Theorem~\ref{thm:wkllr} and Proposition~\ref{prop:rfncoll} the
theory is contained in $\WKLs+\neg\ISig$,
which is $\Pi^1_1$-conservative over $\RCAs+\neg\ISig$ by the
Simpson--Smith theorem \cite{SimpsonSmith} relativized as in
\cite[\S3]{FKWY}. 
\end{proof}

\begin{remark}\label{rem:wkl}
$\WKLs+\neg\ISig$ has no $\omega$-models, and its characteristic
theorems (the reflection instances, full collection) fail in
$\omega$-models as simple as $\mathrm{REC}$. So nothing here bears on the
$\omega$-model and conservativity questions over $\WKL_0$ taken up next.
The two provability phenomena live on disjoint classes of models.
They do, however, combine to cover the weak bases themselves, by
splitting on $\ISig$; see Corollary~\ref{cor:weakcons}.
\end{remark}

\section{The $\omega_1$-tower of forcing extensions}\label{sec:towers}

We now treat the central regime of $\RCA$ and $\WKL_0$, where
reflection is unprovable (Corollary~\ref{cor:unif}) but, as we show,
$\Pi^1_1$-conservative. The strategy is this. We start from a
countable model and keep its first-order part $M$ fixed. By a
recursion of length $\omega_1$ we expand the second-order part. At
each successor step we adjoin a \emph{universal set} for the level
built so far, that is, a single set whose sections list that level
exactly; and we close the level under $\Delta^0_1$ comprehension and 
appending of generic paths through unbounded binary trees. At limits we take unions. Reflection is
then proved as in the L\'evy--Montague reflection theorem of set
theory, with the levels of the tower in place of the $V_\alpha$'s.
The tower is increasing and continuous, so a L\"owenheim--Skolem
argument produces, for any given formula, levels that are elementary
in the union with respect to it; the universal set sitting on top of
such a level mirrors the whole model inside a single set. The cardinality considerations are as in $\omega_1$-like models
of arithmetic \cite{MillsParis,EnayatMohsenipour}.

\subsection{Covering, continuity, and reflection}

In this subsection we isolate the abstract properties by which a
model of this shape recovers collection and reflection. The
construction of the model itself follows in the next two
subsections.

\begin{definition}\label{def:covering}
Let $(M,\mathcal{S})\models\RCAs$. An \emph{$M$-indexed family} in
$\mathcal{S}$ is a family $\{Z_y : y\in M\}\subseteq\mathcal{S}$,
indexed in the metatheory by the elements of $M$. For
$X\in\mathcal{S}$ write $\mathcal{S}_X:=\{(X)_y : y\in M\}$ for the
family of sections of $X$, itself an $M$-indexed family in
$\mathcal{S}$ by $\Delta^0_1$ comprehension.
\begin{enumerate}
\item[(i)] $(M,\mathcal{S})$ has the \emph{covering property} if
every $M$-indexed family in $\mathcal{S}$ is contained in
$\mathcal{S}_X$ for a single $X\in\mathcal{S}$.
\item[(ii)] $(M,\mathcal{S})$ has the \emph{continuous covering
property} if the covers can moreover be drawn from a single family
$\mathcal{I}\subseteq\mathcal{S}$ that is closed under unions of
small upwards directed subfamilies. That is: every $M$-indexed family in $\mathcal{S}$ is
contained in $\mathcal{S}_X$ for some $X\in\mathcal{I}$; and
whenever $C\subseteq\mathcal{I}$ has cardinality at most $|M|$ and
$\{\mathcal{S}_X : X\in C\}$ is upwards directed with respect to
inclusion ordering, then there is
$X'\in\mathcal{I}$ with
$\mathcal{S}_{X'}=\bigcup_{X\in C}\mathcal{S}_X$.
\end{enumerate}
\end{definition}

$M$-indexation generalizes countability: a nonempty family is
$M$-indexed exactly when it has at most $|M|$ members, and models of
$\RCAs$ are infinite, so finite and countable subfamilies are always
$M$-indexed. The covering property will give collection. The
continuous covering property will give reflection: the section
families $\mathcal{S}_X$, $X\in\mathcal{I}$, play the role of the
partially built submodels in a downward L\"owenheim--Skolem
argument, covering provides the next submodel, and continuity
provides the limit.

\begin{lemma}\label{lem:covering}
Let $(M,\mathcal{S})\models\RCAs$.
\begin{enumerate}
\item[(i)] If $(M,\mathcal{S})$ has the covering property, then it
satisfies \emph{collection} for all $L_2$ formulas
$\varphi(x,Y)$ with parameters:
$\forall x\,\exists Y\,\varphi(x,Y)\to\exists X\,\forall x\,\exists
y\,\varphi(x,(X)_y)$.
\item[(ii)] If $(M,\mathcal{S})$ has the continuous covering
property, then it satisfies the full reflection scheme $\Rfns$.
\end{enumerate}
\end{lemma}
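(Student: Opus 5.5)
For (i), I will argue directly from the covering property. Assume $(M,\mathcal S)\models\forall x\,\exists Y\,\varphi(x,Y)$, with the parameters of $\varphi$ fixed. In the metatheory I choose, for each $x\in M$, a witness $Z_x\in\mathcal S$ with $(M,\mathcal S)\models\varphi(x,Z_x)$. This needs only the axiom of choice in the metatheory. The family $\{Z_x:x\in M\}$ is $M$-indexed, so covering yields $X\in\mathcal S$ with every $Z_x\in\mathcal S_X$. Then for every $x$ there is $y\in M$ with $(X)_y=Z_x$, so $(M,\mathcal S)\models\forall x\,\exists y\,\varphi(x,(X)_y)$. Nothing about the complexity of $\varphi$ is used, since satisfaction is computed externally.

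For (ii), I follow the L\'evy--Montague/downward L\"owenheim--Skolem pattern. Fix $\varphi(\vec P,\vec x,\vec X)$ and parameters $\vec P\in\mathcal S$. Let $\chi_1,\dots,\chi_s$ be the finitely many subformulas of $\varphi$ of the form $\exists Y\,\psi$, where universal quantifiers are rewritten as $\neg\exists\neg$. Call a family $\mathcal F\subseteq\mathcal S$ \emph{closed} if the following holds: for all $i$, all number tuples $\vec a$ and all set tuples $\vec S$ from $\mathcal F$, if $(M,\mathcal S)\models\exists Y\,\psi_i(Y,\vec a,\vec S)$, then some $D\in\mathcal F$ satisfies $(M,\mathcal S)\models\psi_i(D,\vec a,\vec S)$. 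By external induction on subformulas, exactly as in the elementarity part of the proof of Theorem~\ref{thm:acalr}, for any $X\in\mathcal S$ with $\mathcal S_X$ closed and $\vec P\dot\in X$, the set $X$ witnesses $\Rfns_\varphi$ at $\vec P$. The atomic and arithmetical steps need only that membership in a section of $X$ is computed correctly inside $(M,\mathcal S)$. This holds because $(X)_y$ is literally the set $\{m:\langle y,m\rangle\in X\}$, and by $\Delta^0_1$ comprehension it lies in $\mathcal S$. Consequently $\varphi^X$, evaluated internally, agrees with $\varphi$ evaluated on the structure whose sets are $\mathcal S_X$.

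It remains to find $X\in\mathcal I$ with $\vec P\dot\in X$ and $\mathcal S_X$ closed. I build a chain $X_0,X_1,\dots$ in $\mathcal I$ of length $\omega$. First, covering applied to the finite family $\vec P$ gives $X_0\in\mathcal I$ with $\vec P\subseteq\mathcal S_{X_0}$. Given $X_n$, there are at most $|M|$ many triples $(i,\vec a,\vec S)$ with $\vec S$ from $\mathcal S_{X_n}$. For each true instance I choose a witness $D$ in the metatheory. The family consisting of these witnesses together with $\mathcal S_{X_n}$ is $M$-indexed, since $M$ is infinite and $|M|\cdot|M|=|M|$. Covering then gives $X_{n+1}\in\mathcal I$ whose section family contains this family, so $\mathcal S_{X_n}\subseteq\mathcal S_{X_{n+1}}$. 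The families $\mathcal S_{X_n}$ therefore form a countable increasing, hence upwards directed, family, and $\omega\le|M|$. Continuity yields $X'\in\mathcal I$ with $\mathcal S_{X'}=\bigcup_n\mathcal S_{X_n}$. Any finite tuple $\vec S$ from this union lies in some $\mathcal S_{X_n}$, so its witnesses lie in $\mathcal S_{X_{n+1}}\subseteq\mathcal S_{X'}$. Thus $\mathcal S_{X'}$ is closed, and $X'$ witnesses the instance.

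The main obstacle is the bookkeeping that the covers from $\mathcal I$ really enlarge the previous stage. I handle it, as above, by including $\mathcal S_{X_n}$ itself in the family to be covered, which makes the chain increasing and directed. A second point to check is the correctness of the relativization $\varphi^{X'}$ as evaluated internally in $(M,\mathcal S)$, given that the index set $M$ may be nonstandard. This is harmless, because the inductive argument treats the internal quantifier $\exists z$ over indices and the external quantifier over $\mathcal S_{X'}$ as ranging over the same sets.
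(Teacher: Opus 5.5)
Your proposal is correct and follows essentially the paper's proof. For (i) you choose witnesses in the metatheory and cover them by a single $X$. For (ii) you run the same downward L\"owenheim--Skolem chain $X_0,X_1,\dots$ in $\mathcal I$, putting $\mathcal S_{X_n}$ into each family to be covered, taking the limit cover by continuity, and finishing with the subformula induction, where your ``closed family'' criterion is just the paper's induction over $\Phi$ stated as a Tarski--Vaught test, and your remark on $\varphi^{X'}$ matches the paper's observation that the $\omega$-model coded by $X$ is exactly $(M,\mathcal S_X)$.
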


\begin{proof}
(i) Assume $(M,\mathcal{S})\models\forall x\,\exists Y\,\varphi(x,Y)$.
For each $a\in M$ choose, in the metatheory, $Y_a\in\mathcal{S}$
with $(M,\mathcal{S})\models\varphi(a,Y_a)$; one exists by the
assumption. This is an $M$-indexed family; let $X\in\mathcal{S}$
cover it. For every $a\in M$ we have $\varphi(a,Y_a)$ and
$Y_a=(X)_y$ for some $y$, whence $\exists y\,\varphi(a,(X)_y)$; thus
$\forall x\,\exists y\,\varphi(x,(X)_y)$. Note that $\varphi$ is
evaluated in the single model throughout.

(ii) Reflection is a scheme, so fix an instance $\Rfns_\varphi$ and
parameters $\vec P\in\mathcal{S}$, and let $\Phi$ consist of the
subformulas of $\varphi$, with $\forall Z$ read as
$\neg\exists Z\,\neg$ throughout --- a finite, subformula-closed
family. We run the downward L\"owenheim--Skolem argument for the
formulas of $\Phi$, using the section families $\mathcal{S}_X$,
$X\in\mathcal{I}$, as the partially built submodels. Let
$X_0\in\mathcal{I}$ cover the finite family $\{\vec P\}$. Suppose
$X_n\in\mathcal{I}$ is built. Consider the formulas
$\exists Z\,\psi$ in $\Phi$ and all assignments of elements of $M$ to
the free number variables and of members of $\mathcal{S}_{X_n}$ to
the free set variables other than $Z$. For each, choose in the
metatheory a witness $W\in\mathcal{S}$ with
$(M,\mathcal{S})\models\psi(W,\dots)$, if one exists. There are only
$|M|$ many pairs of a formula and an assignment. So the witnesses,
together with $\mathcal{S}_{X_n}$, form an $M$-indexed family; let
$X_{n+1}\in\mathcal{I}$ cover it. The resulting chain
$\mathcal{S}_{X_0}\subseteq\mathcal{S}_{X_1}\subseteq\cdots$ is
countable, so there is $X\in\mathcal{I}$ with
$\mathcal{S}_X=\bigcup_n\mathcal{S}_{X_n}$.

We claim that $(M,\mathcal{S}_X)$ agrees with $(M,\mathcal{S})$ on
every formula of $\Phi$, at all parameters from $M$ and
$\mathcal{S}_X$. Induct on the members of $\Phi$, the subformula
closure keeping the induction inside $\Phi$. The two structures
share the first-order part, so atomic formulas get the same value,
and negation, conjunction and number quantifiers pass through the
induction. Consider $\exists Z\,\psi$ in $\Phi$. If
$(M,\mathcal{S}_X)\models\exists Z\,\psi$, its witness also belongs
to $\mathcal{S}$, and the agreement on $\psi$ carries the truth up.
If $(M,\mathcal{S})\models\exists Z\,\psi$, the finitely many set
parameters lie in some $\mathcal{S}_{X_n}$; a chosen witness $W$
then lies in $\mathcal{S}_{X_{n+1}}\subseteq\mathcal{S}_X$, and the
agreement on $\psi$ carries the truth down.

Finally, $\vec P\dot\in X$, and the $\omega$-model coded by $X$
(Definition~\ref{def:cl}) is exactly $(M,\mathcal{S}_X)$. So for all
$\vec x\in M$ and all $\vec X\dot\in X$, the relativization
$\varphi^X(\vec P,\vec x,\vec X)$ expresses the satisfaction of
$\varphi(\vec P,\vec x,\vec X)$ in $(M,\mathcal{S}_X)$, and this
agrees with its truth in $(M,\mathcal{S})$. Thus $X$ witnesses
$\Rfns_\varphi$ at $\vec P$.
\end{proof}

The proof of (ii) --- the choice of witnesses and the induction ---
involves only the finitely many subformulas of the instance being
proved. This locality costs nothing here (running the same chain
over all formulas at once, of which there are countably many, would
produce a single $X$ elementary for the full language), but it is
what lets the metamathematical analysis of
Section~\ref{sec:onecon} formalize the argument as it stands, one
instance at a time.

\subsection{Coding by generic listing}

This subsection constructs the universal set of the successor step
of the tower: a single set whose columns list the second-order part
of a given countable model exactly. If a countable family
$\{Y_k : k\in\omega\}$ of nonempty sets is coded
naively over a nonstandard $M$, by putting $Y_k$ in column $k$, then the
nonempty columns of the resulting $X$ form exactly the standard cut,
which is $\Sigma_1(X)$-definable, so $\ISig$ with parameter $X$
fails. Genericity is exactly what prevents the listing from leaving such
a definable trace. (For a standard $M$ the genericity is harmless, and
the lemma applies to every countable model, standard or not.)

\begin{lemma}[Coding lemma]\label{lem:codingns}
Let $(M,\mathcal{S})\models\RCA$ be countable. Then there are a family
$\overline{\mathcal{S}}\supseteq\mathcal{S}$ with
$(M,\overline{\mathcal{S}})\models\RCA$ and a set
$X\in\overline{\mathcal{S}}$ whose $M$-sections are precisely the members
of $\mathcal{S}$, such that every member of $\overline{\mathcal{S}}$ is
$\Delta_1(X)$ in the sense of $M$.
\end{lemma}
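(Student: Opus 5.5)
We construct $X$ by forcing over the countable model $(M,\mathcal{S})$ and then take $\overline{\mathcal{S}}:=\{Z\subseteq M: Z\text{ is }\Delta_1(X)\text{ in }M\}$. This family is closed under $\Delta^0_1$ comprehension. It contains $\mathcal{S}$ as soon as every member of $\mathcal{S}$ is a section of $X$, and every member of $\overline{\mathcal{S}}$ is then trivially $\Delta_1(X)$. So the whole content of the lemma reduces to two requirements: (a) the sections of $X$ are exactly the members of $\mathcal{S}$, and (b) $(M,X)\models\ISig(X)$, i.e.\ $\Sigma_1(X)$-induction holds in $M$. Indeed, (b) gives $\RCA$ for the $\Delta_1(X)$-definable sets, since $\Sigma_1$ formulas with parameters that are $\Delta_1(X)$ are $\Sigma_1(X)$.

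The forcing conditions will be finite partial listings, relativized to $M$. A condition is a pair $p=(\ell_p,s_p)$, where $\ell_p\in M$ is a length and $s_p$ is an $M$-finite sequence $\langle(i,\sigma_i): i<\ell_p\rangle$ with each $\sigma_i$ an $M$-finite initial-segment approximation. We also attach to each column $i<\ell_p$ a commitment to a set $A_i\in\mathcal{S}$. To keep conditions "internal", I would rather take conditions to be pairs consisting of an $M$-finite piece of $X$ together with a standard-finite list of promises $(i,A)$ with $A\in\mathcal{S}$, meaning that column $i$ must equal $A$. This is a Mathias/Cohen hybrid. Extension adds more of $X$ consistent with the promises and adds new promises on columns not yet mentioned. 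Every column, including nonstandard ones, must eventually be promised to some member of $\mathcal{S}$. Density gives this: for any column index $c\in M$ we can extend by promising column $c$ to $\emptyset$ or to any chosen $A$. Since $\mathcal{S}$ and $M$ are countable, there are countably many such dense sets, together with the dense sets ensuring that each $A\in\mathcal{S}$ appears as some column. A sufficiently generic filter $G$ then determines $X$ satisfying (a): each column equals its promised set, and all of $\mathcal{S}$ is listed.

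The main obstacle is (b): preserving $\ISig(X)$. This is exactly where the naive coding fails, as noted before the lemma. I would follow the standard proof that Cohen-generic sets over models of $\ISig$ preserve $\ISig$ (and Harrington-style forcing). Fix a $\Sigma_1(X)$ formula $\exists y\,\theta(x,y,X)$ and a condition $p$. The key point is that the relation "$q\le p$ forces $\exists y\,\theta(x,y,\dot X)$" must be $\Sigma_1$ over $(M,\mathcal{S})$ in parameters from $\mathcal{S}$, namely those promised sets. The promises are finitely many sets $A_1,\dots,A_k$, so their join $A=A_1\oplus\dots\oplus A_k\in\mathcal{S}$. The set of $M$-finite extensions compatible with the promises is then $\Delta_1(A)$, and forcing a $\Sigma_1$ statement amounts to the existence of a compatible $M$-finite extension witnessing $\theta$. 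The subtlety is that new promises added later could conflict: an extension forcing $\theta$ may fill a column that a later promise needs. I would handle this by letting finite extensions write only on columns already promised or on columns with index beyond a bound. This can be arranged, since unpromised columns are free and a witness uses only finitely many bits. Given this, density of deciding conditions follows from $\ISig(A)$ in $(M,\mathcal{S})$. Take the least $x\le a$ such that no compatible extension forces $\exists y\,\theta(x,y)$, a $\Pi_1(A)$ minimization available by $\ISig(A)$, which is equivalent to $\mathrm{L}\Pi_1(A)$. This yields a condition forcing any failure of induction to be refuted, so the generic $X$ satisfies $\ISig(X)$. I expect the bookkeeping showing that the forcing relation stays $\Sigma_1$ in a single member of $\mathcal{S}$, while the promises interact correctly with nonstandard column indices, to be the delicate step.
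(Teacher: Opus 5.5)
Your route is correct in outline, but it differs from the paper's in how conditions are designed and in how induction is preserved.

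\textbf{Conditions.} The paper takes a condition to be a pair $(W_p,a_p)$ with $W_p\in\mathcal{S}$ and $a_p\in M$. It freezes the first $a_p$ columns, possibly nonstandardly many, as whole sets, with no partial bits at all. The worry that made you abandon your first description (an $M$-length list of commitments) is resolved by letting the list itself be a single set of $\mathcal{S}$. Your standard-finite promise lists cannot freeze all columns below a nonstandard bound, which is why you need the Cohen piece.

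\textbf{Preservation of induction.} The paper defines forcing internally, Kripke-style, for all formulas. It shows that forcing a $\Pi_1(X)$ formula is $\Pi^0_1$ in $W_p$, and transfers the ground model's $\Pi^0_1$ induction to a forced $\mathrm{I}\Pi_1(X)$. It then gets $\mathrm{I}\Sigma_1(X)$ from deductive closure of forcing, and finishes with a full truth lemma for filters meeting all definable dense classes. You stay at the $\Sigma_1/\Pi_1$ level. ``Some extension of $p$ strongly forces $\exists y\,\theta(x,y)$'' is $\Sigma^0_1$ in the join of $p$'s promises, since new promises can be replaced by copying their finitely many relevant bits into the finite piece. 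Take the least $x_0\le a$ at which this fails. The cases $x_0=0$, $0<x_0\le a$ (via persistence of $\neg\varphi(x_0)$ to an extension forcing $\varphi(x_0-1)$), and no such $x_0$ each give an extension settling the induction instance up to $a$. Meeting these countably many explicit dense sets then suffices.

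\textbf{Comparison.} The key observation is shared: the forcing predicate one level up is $\Pi^0_1$ in a ground-model set, so the ground model's $\Sigma^0_1$ induction applies. Your version is more elementary and needs genericity only for explicitly listed dense sets. The paper's version is uniform and internal, because a condition is a single set of $\mathcal{S}$ and forcing is a formula. That is what supports generalizations such as Remark~\ref{rem:fullind}.

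\textbf{A point to fix.} The conflict you worry about does not arise, and the restriction you propose would hurt. A promise for column $c$ can always be chosen consistent with the $M$-finitely many bits already written there; the $M$-finite set of those bits is itself in $\mathcal{S}$. A set $A$ to be listed can go on a fresh column above everything written, so no restriction is needed.

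Forbidding writes on unpromised low columns would instead prevent any condition from determining $X$ below a nonstandard bound $t$. That is exactly what the $\Pi_1$ half of your truth lemma needs. Suppose $\theta(x,y,X)$ holds while some $q\in G$ has no extension strongly forcing $\exists y\,\theta(x,y)$. The contradiction comes from a condition in $G$ below $q$ whose finite piece covers all positions below $t_\theta$. You should state the density of such conditions explicitly.
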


The proof occupies the rest of the subsection. Following the
proof-theoretic use of forcing surveyed by Avigad \cite{Avigad2004},
we set the forcing up internally, as a scheme over $\RCAs$:
Definition~\ref{def:collapse} introduces the forcing relation,
Lemma~\ref{lem:logic} establishes its logic, Lemma~\ref{lem:impbq}
derives the clause for bounded quantification,
Lemma~\ref{lem:complexity} computes the arithmetic complexity of
forcing, and
Lemmas~\ref{lem:transfer} and~\ref{lem:forceind} turn induction in
the ground model into forced induction. Models and generic filters
enter only at the end (Definition~\ref{def:generic}), through the
truth lemma (Lemma~\ref{lem:truth}) and the assembly of the
extension.

\begin{definition}[$\RCAs$]\label{def:collapse}
The \emph{listing forcing} is defined as follows. A
\emph{condition} is a pair $p=(W_p,a_p)$ of a
set and a number, read as the finite list
$\langle (W_p)_i:i<a_p\rangle$ of the first $a_p$ columns of $W_p$;
columns beyond $a_p$ are immaterial. Put $q\le p$ iff $a_q\ge a_p$
and $(W_q)_i=(W_p)_i$ for $i<a_p$, i.e.\ $q$ end-extends the list of
$p$. The \emph{forcing language} is the first-order language of
arithmetic with one set constant $X$ and connectives
$\neg,\wedge,\to,\forall$ ($\vee,\exists$ abbreviated as usual). For
a sentence $\varphi$ of the forcing language --- numerical
parameters allowed, here and throughout --- and a condition $p$,
define $p\Vdash\varphi$ by recursion on $\varphi$:
\begin{itemize}\setlength\itemsep{1pt}
\item $X$-free atomic $\alpha$: $p\Vdash\alpha$ iff $\alpha$ holds;
\item $p\Vdash(k\in X)$ iff $k=\langle i,m\rangle$ with $i<a_p$
and $k\in W_p$;
\item $p\Vdash\varphi\wedge\psi$ iff $p\Vdash\varphi$ and $p\Vdash\psi$;
\item $p\Vdash\neg\varphi$ iff no $q\le p$ has $q\Vdash\varphi$;
\item $p\Vdash\varphi\to\psi$ iff every $q\le p$ with
$q\Vdash\varphi$ has $q\Vdash\psi$;
\item $p\Vdash\forall x\,\varphi(x)$ iff $p\Vdash\varphi(n)$ for all
$n$.
\end{itemize}
\end{definition}

For fixed $\varphi$ the clauses unwind to an $L_2$ formula in $p$
and the parameters, set quantifiers entering through the clauses
for negation and implication.
All assertions about $\Vdash$ below are therefore schemes, one
statement per formula, each proved in the theory named with the
statement.

\begin{remark}[The Kripke reading]\label{rem:kripke}
Within any particular model of $\RCAs$, the definition carves out a
first-order Kripke model, definable from within that model: the
nodes are the conditions, extension is accessibility, the domain at
every node is the first-order universe --- constant, and with every
element named by a parameter --- and the atomic clauses give the
valuation. The clauses for $\neg,\wedge,\to,\forall$ are verbatim
the Kripke semantics of intuitionistic logic, so the unwound $L_2$
formula $p\Vdash\varphi$ says precisely that $\varphi$ is forced at
the node $p$ of this Kripke model. The scheme $\Vdash$ is thus an
interpretation, in the classical theory $\RCAs$, of the classical
theory of a Kripke model of the forced theory, a Kripke model being
just a particular kind of first-order structure; this is the
treatment of forcing of \cite{PakhomovVisser}. Consequently the
standard facts of Kripke semantics are available as theorem schemes
of $\RCAs$: their textbook proofs are external inductions on a
formula or on a derivation, each instance a finite unwinding of the
clauses, carried out in $\RCAs$ and consuming no induction of the
theory. What the standard facts take as input is two properties of
the atomic valuation, checked here once and for all. \emph{Atoms
are monotone}: an $X$-free atom does not mention the condition, and
an atom $k\in X$ forced by $p$ has $k=\langle i,m\rangle$ with its
column frozen, $i<a_p$, so every $q\le p$, having
$(W_q)_i=(W_p)_i$, forces it as well. \emph{Atoms are stable}: if
$p\nVdash\alpha$, then some $q\le p$ forces $\neg\alpha$ --- $p$
itself when $\alpha$ is $X$-free or its column is frozen in $p$, no
extension then changing the evaluation, and the extension freezing
the column as the empty set otherwise. Classical logic is recovered
on the $\vee,\exists$-free fragment over the stable atoms: reading
$\vee$ and $\exists$ as abbreviations is, up to the double negation
of atoms, the G\"odel--Gentzen translation, built into the syntax.
This settles the logic of forcing, as the next lemma records.
\end{remark}

\begin{lemma}[Logic of forcing; $\RCAs$]\label{lem:logic}
For all formulas $\varphi$ of the forcing language and all
conditions $p$:
\begin{enumerate}
\item[(i)] \emph{Persistence:} if $p\Vdash\varphi$ and $q\le p$, then
$q\Vdash\varphi$.
\item[(ii)] \emph{Consistency:} no condition forces both $\varphi$
and $\neg\varphi$.
\item[(iii)] \emph{Intuitionistic closure:} the sentences forced by
a fixed condition are closed under intuitionistic predicate logic,
in the official connectives $\neg,\wedge,\to,\forall$.
\item[(iv)] \emph{Deductive closure:} the sentences forced by a
fixed condition are closed under classical predicate logic.
\item[(v)] \emph{Stability:} $p\Vdash\varphi$ iff
$p\Vdash\neg\neg\varphi$; equivalently, $p\nVdash\varphi$ iff some
$q\le p$ forces $\neg\varphi$.
\item[(vi)] \emph{Decision:} the class
$D_\varphi:=\{q:q\Vdash\varphi\text{ or }q\Vdash\neg\varphi\}$ is
dense: every condition has an extension in $D_\varphi$.
\end{enumerate}
\end{lemma}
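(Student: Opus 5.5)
The plan is to prove the six items by external induction on the formula $\varphi$, each instance being a finite unwinding of the clauses of Definition~\ref{def:collapse} inside $\RCAs$. No induction of the theory is used, in line with Remark~\ref{rem:kripke}. The only inputs are the two atomic facts recorded there, \emph{monotonicity} and \emph{stability} of atoms, together with the reflexivity and transitivity of $\le$ on conditions. Transitivity holds because end-extension of the listed columns composes.

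I would proceed in the following order.

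\emph{(i) Persistence.} Induct on $\varphi$. Atoms are the monotonicity fact. Conjunction and $\forall$ follow directly from the induction hypothesis. For $\neg\varphi$ and $\varphi\to\psi$, the clauses quantify over all extensions of $p$, and by transitivity the extensions of $q$ are among them.

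\emph{(v) Stability.} I would do this next, again by induction on $\varphi$. The direction $p\Vdash\varphi\Rightarrow p\Vdash\neg\neg\varphi$ comes from persistence and the negation clause: any $q\le p$ forces $\varphi$, so it does not force $\neg\varphi$, since $q\le q$. For the converse, the equivalent form ``$p\nVdash\varphi$ implies some $q\le p$ forces $\neg\varphi$'' is the main work.
\begin{itemize}
\item Atoms: this is the atomic stability fact.
\item $\varphi\wedge\psi$: if one conjunct fails at $p$, apply the hypothesis to that conjunct.
\item $\forall x\,\varphi$: if $p\nVdash\varphi(n)$, get $q\le p$ with $q\Vdash\neg\varphi(n)$. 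Then by persistence no extension of $q$ forces $\forall x\,\varphi$.
\item $\neg\varphi$: if $p\nVdash\neg\varphi$, some $q\le p$ forces $\varphi$, hence forces $\neg\neg\varphi$.
\item $\varphi\to\psi$: if $p\nVdash\varphi\to\psi$, pick $q\le p$ with $q\Vdash\varphi$ and $q\nVdash\psi$. By the hypothesis for $\psi$, some $r\le q$ forces $\neg\psi$, and by persistence $r\Vdash\varphi$. No extension of $r$ can then force $\varphi\to\psi$.
\end{itemize}
Having this equivalent form, the displayed equivalence $p\Vdash\varphi\iff p\Vdash\neg\neg\varphi$ follows by the usual short argument.

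\emph{(ii) and (vi).} Consistency is immediate from the negation clause applied with $q=p$. Decision follows from (v): if $q\nVdash\varphi$, then some extension of $q$ forces $\neg\varphi$.

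\emph{(iii) Intuitionistic closure.} I would fix a Hilbert-style calculus for intuitionistic logic in $\neg,\wedge,\to,\forall$ and argue by external induction on derivations. Every axiom instance is forced by every condition, and modus ponens and generalization preserve forcing at $p$. This is the standard soundness proof for Kripke semantics with constant domains, every element named by a parameter. It uses only persistence and the clauses, and the rule for $\forall$ is valid because the domain is constant.

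\emph{(iv) Deductive closure.} Since $\vee,\exists$ are abbreviations, a classical derivation in the official connectives becomes intuitionistic once double-negation elimination $\neg\neg\chi\to\chi$ is added for all formulas $\chi$. This holds as in the G\"odel--Gentzen translation with stable atoms. Each such instance is forced by every condition by (v): any $q$ forcing $\neg\neg\chi$ forces $\chi$. Combining this with (iii) gives closure under classical logic.

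The main obstacle I expect is the implication case of (v), together with making sure that (iii) really covers the quantifier axioms with parameters. Both are routine once persistence is available. The point to check carefully is that every step is a fixed finite unwinding, so that each instance lives in $\RCAs$ without appeal to $\ISig$.
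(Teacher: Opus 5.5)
Your proposal is correct and matches the paper on (i)--(iii), but it routes (iv)--(vi) differently. The paper never proves stability by its own induction. It obtains (iv) first: the atomic stability instances are forced, and for $\vee,\exists$-free $\psi$ intuitionistic logic derives $\neg\neg\psi\to\psi$ from the stability of the atoms of $\psi$ (the cited G\"odel--Gentzen fact), so the whole stability scheme is forced by (iii). Then (v) follows from (iv) in one line via the clause for $\to$, and (vi) is read off the negation clause directly, without (v). You instead prove (v) by a direct semantic induction on $\varphi$, using only persistence and the clauses, and then derive (iv) from (v) and (iii). Your case analysis is sound: the negation case needs no hypothesis, and the implication case needs only the hypothesis for the consequent. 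It is exactly the semantic counterpart of the syntactic lemma the paper quotes. Your route is more self-contained: its only input from intuitionistic proof theory is that classical logic in $\neg,\wedge,\to,\forall$ is intuitionistic logic plus the scheme $\neg\neg\chi\to\chi$. The paper's route is more economical: after (iii), everything reduces to the two atomic properties and standard citations, with no second induction on formulas. Your derivation of (vi) via (v) is valid but roundabout: if no extension of $q$ forces $\varphi$, then $q$ itself forces $\neg\varphi$ by the clause for negation.
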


\begin{proof}
(i) The monotonicity lemma of Kripke semantics, an induction on
$\varphi$ needing only the monotone atoms of
Remark~\ref{rem:kripke}: the clauses for $\neg$ and $\to$ are
persistent outright, extensions of $q$ being extensions of $p$, and
$\wedge,\forall$ pass through the induction.

(ii) If $p\Vdash\neg\varphi$, then no $q\le p$ forces $\varphi$; and
$p\le p$.

(iii) The soundness of intuitionistic predicate logic for Kripke
semantics, applied proof by proof: for a fixed derivation of
$\varphi$ from sentences forced by $p$, an external induction on
the derivation --- persistence entering where hypotheses are
discharged --- verifies $p\Vdash\varphi$, in $\RCAs$ as
Remark~\ref{rem:kripke} explains.

(iv) Every instance of the stability scheme $\neg\neg\psi\to\psi$
is forced at every condition. For an atom $\alpha$: a condition
forcing $\neg\neg\alpha$ --- no extension of it forces
$\neg\alpha$ --- forces $\alpha$, by the stability of atoms
(Remark~\ref{rem:kripke}), and the clause for $\to$ concludes. For
compound $\psi$, intuitionistic logic proves $\neg\neg\psi\to\psi$
from the stability of the atoms occurring in it, so the instances
are forced by the atomic case and (iii). And in our
$\vee,\exists$-free language, classical predicate logic is
axiomatized over intuitionistic predicate logic precisely by the
stability scheme; for this and the preceding standard fact see
\cite[Lemma~2.2, Proposition~2.3]{Avigad2004}, or
\cite[Ch.~2, \S3]{TroelstraVanDalen}. Closure under classical
deduction thus reduces, through the forced stability instances, to
closure under intuitionistic deduction, which is (iii).

(v) If $p\Vdash\varphi$, then by persistence and consistency no
$q\le p$ forces $\neg\varphi$, that is, $p\Vdash\neg\neg\varphi$.
Conversely, $\neg\neg\varphi\to\varphi$ is forced --- the stability
scheme of (iv) --- so the clause for $\to$ turns
$p\Vdash\neg\neg\varphi$ into $p\Vdash\varphi$. For the second
formulation, take the contrapositive and unwind the clause for
$\neg$ once: $p\nVdash\varphi$ iff $p\nVdash\neg\neg\varphi$ iff
some $q\le p$ forces $\neg\varphi$.

(vi) Given $p$: if some $q\le p$ forces $\varphi$, then $q\in
D_\varphi$; if none does, then $p\Vdash\neg\varphi$ by the clause for
negation, and $p\in D_\varphi$.
\end{proof}

The bounded number quantifier has the expected derived clause; we
record it for later use.

\begin{lemma}[Bounded quantification; $\RCAs$]
\label{lem:impbq}
For all formulas of the forcing language and all conditions $p$:
$p\Vdash\forall y\!\le\!u\,\varphi(y)$ iff $p\Vdash\varphi(k)$ for
every $k\le u$.
\end{lemma}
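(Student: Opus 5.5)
Here $\forall y\!\le\!u\,\varphi(y)$ is, in the official connectives, the sentence $\forall y\,(y\le u\to\varphi(y))$, where $y\le u$ is an $X$-free formula (an atom, or an abbreviation built from $X$-free atoms). The plan is to unwind the clauses of Definition~\ref{def:collapse} and then apply persistence and the fact that $X$-free sentences are forced exactly when they are true.

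\emph{Preliminary observation.} For $X$-free sentences $\theta$, forcing agrees with truth: $p\Vdash\theta$ iff $\theta$ holds. I would check this by external induction on $\theta$, using that the relevant atomic valuation does not depend on the condition. For $\neg$, the clause reads ``no $q\le p$ forces $\theta$'', which by induction is ``$\theta$ fails'', since $p\le p$. For $\to$, the clause reads ``every $q\le p$ with $\theta_1$ has $\theta_2$'', which by induction is $\theta_1\to\theta_2$ (again using $p\le p$ for one direction). The cases $\wedge$ and $\forall$ pass through directly.

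\emph{Forward direction.} By the $\forall$ clause, $p\Vdash\forall y\,(y\le u\to\varphi(y))$ means $p\Vdash (k\le u\to\varphi(k))$ for every $k$. Fix $k\le u$. By the observation, $p\Vdash k\le u$. Since $p\le p$, the $\to$ clause then gives $p\Vdash\varphi(k)$.

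\emph{Converse.} Suppose $p\Vdash\varphi(k)$ for every $k\le u$. Fix any $k$; we must show $p\Vdash(k\le u\to\varphi(k))$. Let $q\le p$ with $q\Vdash k\le u$. By the observation, $k\le u$ holds, so $p\Vdash\varphi(k)$, and persistence (Lemma~\ref{lem:logic}(i)) gives $q\Vdash\varphi(k)$. This verifies the $\to$ clause, and the $\forall$ clause then concludes. No internal induction is used: for each fixed $\varphi$ the argument is a finite unwinding, so it goes through in $\RCAs$ as a scheme.

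\emph{Main obstacle.} The only delicate point is the bookkeeping of how $\le$ is officially rendered. If $y\le u$ is defined, say as $\exists z\,(y+z=u)$, that is, $\neg\forall z\,\neg(y+z=u)$, it is no longer literally an atom. The observation above is exactly what covers this case, so it is proved for all $X$-free sentences rather than just atoms.
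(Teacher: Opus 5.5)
Your proof is correct and is essentially the paper's argument: unwind the $\forall$ and $\to$ clauses, use that the $X$-free formula $k\le u$ is forced exactly when it is true (so the instances with $k>u$ are forced vacuously), and use persistence to pass from $p\Vdash\varphi(k)$ to its extensions. Your general observation that every $X$-free sentence is forced iff true is also recorded by the paper, in the proof of Lemma~\ref{lem:forceind}(i). It is needed only if $\le$ is a defined symbol; the paper simply treats $k\le u$ as an atom.
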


\begin{proof}
The bounded quantification abbreviates
$\forall y\,(y\le u\to\varphi(y))$, so $p$ forces it iff
$p\Vdash(k\le u\to\varphi(k))$ for every $k$. The
atom $k\le u$ is $X$-free. If it is true, every condition
forces it, and the clause for $\to$ turns the implication into
``every $q\le p$ forces $\varphi(k)$'', which by persistence is just
$p\Vdash\varphi(k)$. If it is false, no condition forces it,
and the implication is forced vacuously. So exactly the instances
with $k\le u$ remain.
\end{proof}

The forcing relation, on arithmetic formulas of low complexity, is
itself of low complexity. To compute it we substitute coded finite
sets for the set constant. Recall the Ackermann membership relation:
$k\inack s$ iff the $k$th digit of the binary expansion of $s$ is
$1$, that is, iff $\lfloor s/2^k\rfloor$ is odd. With exponentiation
in the signature this is a $\Delta_0$ relation, and bounded
comprehension is available: for every $\Delta^0_0$ formula $\varphi$,
possibly with
parameters, and every $b$ there is $s<2^b$ with
$\forall k\!<\!b\,(k\inack s\leftrightarrow\varphi(k))$. For a
formula $\theta$ of the forcing language and a number $s$, let
$\theta[X{:=}s]$ be the arithmetic formula obtained by replacing
each atom $u\in X$ by $u\inack s$; it is $\Delta_0$ whenever
$\theta\in\Delta_0(X)$.

Next, assign to each $\theta\in\Delta_0(X)$ a \emph{bounding term}
$t_\theta$, in the variables of $\theta$, by recursion on the
construction of $\Delta_0(X)$ formulas:
\begin{gather*}
t_{u\in X}:=u,\qquad
t_\alpha:=0\ \ (\alpha\ X\text{-free atomic}),\qquad
t_{\neg\theta}:=t_\theta,\\
t_{\theta\wedge\psi}:=t_{\theta\to\psi}:=t_\theta+t_\psi,\qquad
t_{\forall y\le u\,\theta}:=t_\theta[y{:=}u].
\end{gather*}
Terms are monotone in each variable, so the evaluation of $\theta$,
under any interpretation of the constant $X$, depends on $X$ only at
numbers up to $t_\theta$
--- an induction on $\theta$, the monotonicity entering at the
bounded quantifier step. Finally, say that $s$ \emph{agrees with $p$
below $t$}, written $s\approx_t p$, if $s$ and the list of $p$ have
the same members among the pairs at most $t$ whose column $p$
freezes:
\[
s\approx_t p\ :\iff\ \forall i\!<\!a_p\,\forall m\!\le\!t\,
\bigl(\langle i,m\rangle\le t\to
(\langle i,m\rangle\inack s\leftrightarrow
\langle i,m\rangle\in W_p)\bigr).
\]
It is easy to see that $s\approx_t p$ can be expressed by a $\Delta^0_0$-formula.
By bounded comprehension every condition
induces canonical agreeing sets: writing $p{\upharpoonright}t$ for
the coded finite set whose members are the pairs
$\langle i,m\rangle\le t$ with $i<a_p$ and
$\langle i,m\rangle\in W_p$, we have $p{\upharpoonright}t<2^{t+1}$
and $p{\upharpoonright}t\approx_t p$. Forcing a $\Delta_0(X)$
formula then means that it holds under every agreeing substitution:

\begin{lemma}[Complexity of forcing; $\RCAs$]\label{lem:complexity}
\begin{enumerate}
\item[(i)] For $\theta\in\Delta_0(X)$,
\begin{align*}
p\Vdash\theta\ &\iff\
\forall s\!<\!2^{t_\theta+1}\,\bigl(s\approx_{t_\theta} p\to\theta[X{:=}s]\bigr)\\
&\iff\
\forall s\,\bigl(s\approx_{t_\theta} p\to\theta[X{:=}s]\bigr).
\end{align*}
Hence $p\Vdash\theta$ is a $\Delta^0_0$ predicate of $p$ and the
parameters of $\theta$ with the set parameter $W_p$.
\item[(ii)] For $\psi=\forall w\,\theta(w)$ with
$\theta\in\Delta_0(X)$, the relation $p\Vdash\psi$ is a
$\Pi^0_1$ predicate of $p$ and the parameters of $\psi$ with the
set parameter $W_p$.
\end{enumerate}
\end{lemma}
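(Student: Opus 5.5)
The plan is to prove (i) by reducing forcing of a $\Delta_0(X)$ formula to classical evaluation at \emph{sufficiently frozen} conditions, and then recovering arbitrary $p$ via stability and decision (Lemma~\ref{lem:logic}(v),(vi)). Call a condition $q$ \emph{$t$-total} if $a_q>t$. By monotonicity of the pairing function, every pair $\langle i,m\rangle\le t$ then has $i\le t<a_q$, so $q$ freezes every column that matters below $t$.

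The first step is an external induction on $\theta\in\Delta_0(X)$. For every $t\ge t_\theta$ (the vector of free variables being arbitrary, as the bounded-quantifier case needs) and every $t$-total $q$, I would show that $q\Vdash\theta$ iff $\theta[X{:=}q{\upharpoonright}t]$ holds. The cases go as follows.
\begin{itemize}
\item Atoms are immediate from the clauses: a membership atom $u\in X$ with $u\le t$ has its column frozen, so forcing coincides with Ackermann membership in $q{\upharpoonright}t$.
\item The case $\wedge$ is trivial.
\item For $\neg$ and $\to$: every extension $r\le q$ is again $t$-total with $r{\upharpoonright}t=q{\upharpoonright}t$. So the induction hypothesis makes the forcing of the subformulas constant along all extensions of $q$, and the clauses collapse to classical negation and implication.
\item For the bounded quantifier, Lemma~\ref{lem:impbq} reduces to the instances $k\le u$. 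Monotonicity of the bounding terms gives $t_{\theta(k)}\le t_\theta[y{:=}u]\le t$, so the induction hypothesis applies to each instance.
\end{itemize}
Alongside this I would record the fact, already noted before the lemma, that $\theta[X{:=}s]$ depends on $s$ only through its members $\le t_\theta$.

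The second step treats a general $p$. Suppose some $s\approx_{t_\theta}p$ satisfies $\neg\theta[X{:=}s]$. Build $q\le p$ with $a_q=t_\theta+1$, keeping the columns $i<a_p$ of $W_p$ and taking the columns $a_p\le i\le t_\theta$ from $s$. Such a $q$ is obtained by $\Delta^0_1$ comprehension from $W_p$ and $s$. Then $q{\upharpoonright}t_\theta$ and $s$ agree below $t_\theta$, so by the first step $q\nVdash\theta$, and hence $p\nVdash\theta$ by persistence. Conversely, suppose $p\nVdash\theta$. By Lemma~\ref{lem:logic}(v) some $r\le p$ forces $\neg\theta$. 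Extend $r$ to a $t_\theta$-total $q$, freezing the extra columns empty. By the first step $\neg\theta[X{:=}q{\upharpoonright}t_\theta]$ holds, and $q{\upharpoonright}t_\theta\approx_{t_\theta}p$ because $q\le p$. This gives the unbounded equivalence.

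The bounded version follows by replacing any agreeing $s$ with its restriction below $t_\theta+1$. This restriction is obtained by bounded comprehension, is $<2^{t_\theta+1}$, still agrees with $p$, and gives the same value of $\theta$. The resulting formula has only bounded quantifiers, with $\approx_t$ being $\Delta^0_0$ in $W_p$, so it is $\Delta^0_0$.

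For (ii), the clause for $\forall$ gives $p\Vdash\forall w\,\theta(w)$ iff $p\Vdash\theta(n)$ for all $n$. By (i), with $w$ treated as a free variable of the bounding term, each conjunct is uniformly $\Delta^0_0$ in $W_p$, so the whole is $\Pi^0_1$.

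I expect the main obstacle to be the $\neg/\to$ step of the first induction, together with keeping all inductions external so that nothing beyond $\RCAs$ is consumed. This is why I work with $t$-total conditions, where forcing is classical, rather than trying to induct directly on the ``all agreeing $s$'' characterization, which is not preserved under negation.
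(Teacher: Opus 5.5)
Your proof is correct, but it is organized differently from the paper's.

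\textbf{The paper's route.} The paper proves $p\Vdash\theta\iff\bar\theta(p)$, where $\bar\theta(p)$ is the unbounded agreeing-$s$ condition, by one induction on $\theta$ covering all conditions at once.
\begin{itemize}
\item Atoms are checked by hand.
\item The negation step uses the same freezing extension you build in your second step, together with the fact that $q{\upharpoonright}t_\theta\approx_{t_\theta}p$ whenever $q\le p$.
\item Implication is rewritten as $\neg(\theta\wedge\neg\psi)$ using deductive closure (Lemma~\ref{lem:logic}(iv)).
\end{itemize}

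\textbf{Your route.} You first isolate an intermediate fact: at conditions with $a_q>t$, forcing a $\Delta_0(X)$ formula is just evaluation at $q{\upharpoonright}t$. Since no extension of such a $q$ can change the relevant bits, every connective, $\to$ included, becomes a one-line case. You then lift to arbitrary $p$ by persistence and stability (Lemma~\ref{lem:logic}(v)). Conceptually, this shows that the agreeing $s$ are exactly the traces of the $t_\theta$-total extensions of $p$, and these are dense below $p$. Your version is more modular. It is not cheaper on the logical side, since stability rests on the same deductive closure that the paper invokes for implications.

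\textbf{Two small corrections.}
\begin{itemize}
\item In your second step, take $a_q=\max(a_p,t_\theta+1)$. With $a_q=t_\theta+1$ you do not get $q\le p$ when $a_p>t_\theta+1$; in that case $q=p$ already works.
\item Your closing claim that a direct induction on the agreeing-$s$ characterization breaks down at negation is not accurate. That is precisely the induction the paper runs, and its negation step goes through.
\end{itemize}
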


\begin{proof}
(i) Write $\bar\theta(p)$ for the unbounded substitution condition,
$\forall s\,\bigl(s\approx_{t_\theta} p\to\theta[X{:=}s]\bigr)$.
It is equivalent to its bounded companion
$\forall s\!<\!2^{t_\theta+1}\,\bigl(s\approx_{t_\theta}
p\to\theta[X{:=}s]\bigr)$, and in both the agreement bound
$t_\theta$ may be raised to any $t\ge t_\theta$. Indeed,
$\theta[X{:=}s]$ depends only on the members of $s$ up to
$t_\theta$. So take any $s\approx_{t_\theta}p$ and replace its
members above $t_\theta$ by those of $p{\upharpoonright}t$: the
result is smaller than $2^{t+1}$, agrees with $p$ below $t$, and
satisfies $\theta[X{:=}\,\cdot\,]$ iff $s$ does. It remains to
prove $p\Vdash\theta\iff\bar\theta(p)$. We
argue by induction on the construction of $\Delta_0(X)$ formulas.

\emph{Atoms.} Suppose first that $\theta$ is an $X$-free atom. The
substitution does not change $\theta$, and at least one agreeing
$s$ exists, namely $p{\upharpoonright}0$; so both sides are
equivalent to $\theta$ itself. Now let $\theta$ be an atom
$k\in X$, here by definition $t_\theta=k$. If $p\Vdash k\in X$, then
$k=\langle i,m\rangle$ with $i<a_p$ and $k\in W_p$, and the
agreement condition at this pair guarantees $k\inack s$ for every
$s\approx_k p$. If $p\nVdash k\in X$, then $p{\upharpoonright}k$ is
an agreeing set that omits $k$, so $\bar\theta(p)$ fails.

\emph{Negations.} Since $t_{\neg\theta}=t_\theta$, the statement
$\overline{\neg\theta}(p)$ says: no $s\approx_{t_\theta}p$
satisfies $\theta[X{:=}s]$.

Suppose $p\Vdash\neg\theta$, and suppose towards a contradiction
that some $s\approx_{t_\theta}p$ satisfies $\theta[X{:=}s]$. Extend
$p$ to a condition $q$ that freezes $s$: put
$a_q:=\max(a_p,t_\theta+1)$, and let $W_q$ keep the columns of
$W_p$ below $a_p$ and read the columns from $a_p$ on off $s$. Every
number up to $t_\theta$ is a pair whose column $q$ freezes, and it
belongs to $W_q$ iff it belongs to $s$: below $a_p$ this is the
agreement, above it is the construction. Hence every
$s'\approx_{t_\theta}q$ has the same members up to $t_\theta$ as
$s$, and therefore satisfies $\theta[X{:=}s']$. This is
$\bar\theta(q)$; by the induction hypothesis $q\Vdash\theta$,
contradicting $p\Vdash\neg\theta$.

Conversely, suppose $\overline{\neg\theta}(p)$, and let $q\le p$.
If $q\Vdash\theta$, then $\theta[X{:=}s]$ holds for
$s=q{\upharpoonright}t_\theta$ by the induction hypothesis; but
this $s$ agrees with $q$ and hence with $p$, contradicting
$\overline{\neg\theta}(p)$. So no extension of $p$ forces $\theta$,
i.e.\ $p\Vdash\neg\theta$.

\emph{Conjunctions and bounded quantifiers.} These cases are
routine, because everything in sight commutes with the connective.
Forcing does: $p\Vdash\theta\wedge\psi$ iff $p\Vdash\theta$ and
$p\Vdash\psi$, and $p\Vdash\forall y\!\le\!u\,\theta$ iff
$p\Vdash\theta(k)$ for all $k\le u$, by Lemma~\ref{lem:impbq}.
The substitution does: it acts on each part separately. And the
bounds match up: $t_{\theta\wedge\psi}$ and
$t_{\forall y\le u\,\theta}$ dominate the bounds of the parts,
terms being monotone, so by the opening remark the statements
$\bar\theta(p)$, $\bar\psi(p)$ and $\overline{\theta(k)}(p)$ may
all be taken at the bound of the compound. The induction hypothesis
for the parts then gives the claim for the compound, after
exchanging two universal quantifiers.

\emph{Implications.} By deductive closure
(Lemma~\ref{lem:logic}(iv)), a condition forces $\theta\to\psi$ iff
it forces $\neg(\theta\wedge\neg\psi)$; the substitution instances
of the two are equivalent outright, and the bounding terms agree,
$t_{\theta\to\psi}=t_{\neg(\theta\wedge\neg\psi)}$. The case thus
reduces to the preceding ones.

(ii) By the clause for $\forall$, $p\Vdash\psi$ iff
$p\Vdash\theta(k)$ for every $k$; and by (i), each statement
$p\Vdash\theta(k)$ is in turn equivalent to its unbounded
substitution condition. Chaining the two equivalences,
\[
p\Vdash\psi\iff
\forall k\,\forall s\,\bigl(s\approx_{t_{\theta(k)}}p\to
\theta(k)[X{:=}s]\bigr).
\]
The right-hand side has the required form: behind the two
universal number quantifiers $\forall k\,\forall s$ stands a
$\Delta^0_0$ matrix, since the value of the bounding term
$t_{\theta(k)}$ is computed from $k$ and the parameters, the
agreement relation $\approx$ is $\Delta^0_0$ with the set
parameter $W_p$, and the substituted formula $\theta(k)[X{:=}s]$
is $\Delta_0$. So $p\Vdash\psi$ is a $\Pi^0_1$ predicate of $p$
and the parameters of $\psi$, with the set parameter $W_p$.
\end{proof}

\begin{lemma}[Induction transfer; $\RCAs$]\label{lem:transfer}
Let $\psi(x)$ be a formula of the forcing language, and suppose
that, for every condition $p$, the formula $p\Vdash\psi(x)$
satisfies induction in the variable $x$. Then every condition
forces the induction axiom
$[\psi(0)\wedge\forall x(\psi(x)\to\psi(x+1))]\to\forall x\,\psi(x)$.
\end{lemma}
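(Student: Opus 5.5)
We fix a condition $p$ and show that $p$ forces the induction axiom. By the clause for $\to$ (and persistence), it suffices to take any $q\le p$ with $q\Vdash\psi(0)\wedge\forall x(\psi(x)\to\psi(x+1))$ and derive $q\Vdash\forall x\,\psi(x)$. Unwinding the clauses for $\wedge$ and $\forall$, this gives $q\Vdash\psi(0)$ and, for every $n$, $q\Vdash\psi(n)\to\psi(n+1)$.

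The natural move is to apply the hypothesis to the single condition $q$. This yields induction for the formula $x\mapsto q\Vdash\psi(x)$. The base case $q\Vdash\psi(0)$ is given. For the step, assume $q\Vdash\psi(n)$. Since $q\le q$ and $q\Vdash\psi(n)\to\psi(n+1)$, the implication clause gives $q\Vdash\psi(n+1)$. Induction in the variable $x$ then yields $q\Vdash\psi(n)$ for all $n$, which is $q\Vdash\forall x\,\psi(x)$ by the clause for $\forall$.

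Since $q\le p$ was arbitrary, $p$ forces the implication, and hence the whole induction axiom.

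There is essentially no obstacle here. The point to check is that the hypothesis is used only at the fixed condition $q$, with the other parameters of $\psi$ held fixed. The induction instance therefore has exactly the form granted by the hypothesis: an $L_2$ formula in $x$ with parameters $q$, $W_q$, and the numerical parameters. The argument uses no induction beyond this one instance, so it goes through in $\RCAs$ as a scheme. The real work lies in verifying the hypothesis, presumably via Lemma~\ref{lem:complexity} for low-complexity $\psi$, and that belongs to the next lemma.
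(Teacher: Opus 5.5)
Your proof is correct and follows the same route as the paper: apply the assumed induction to the predicate $q\Vdash\psi(x)$ at a condition $q$ forcing the premise, using the $\to$ clause at $q\le q$ for the step, and then conclude with the $\to$ clause at $p$. Your parenthetical appeal to persistence is unnecessary, since the clause for $\to$ already ranges over all extensions of $p$.
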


\begin{proof}
Suppose a condition $p$ forces the premise: $p\Vdash\psi(0)$ and
$p\Vdash\forall x\,(\psi(x)\to\psi(x+1))$, the latter giving
$p\Vdash(\psi(n)\to\psi(n+1))$ for every $n$. So the formula
$p\Vdash\psi(x)$ holds at $x=0$ and, by the clause for $\to$,
passes from each $x$ to $x+1$; by the induction assumed for it,
it holds at every $x$, that is, $p$ forces the conclusion
$\forall x\,\psi(x)$.
Since every condition forcing the premise thus forces the
conclusion, the clause for $\to$ makes every condition force the
axiom.
\end{proof}

Call a sentence of the forcing language \emph{forced} if every
condition forces it. Write
$\mathsf{EA}(X)$ for elementary arithmetic in $X$: the basic axioms,
those of exponentiation included, and $\Delta_0(X)$ induction.
Write $\mathsf{I}\Sigma_1(X)$ and $\mathsf{I}\Pi_1(X)$ for the
schemes of induction for $\Sigma_1(X)$ and $\Pi_1(X)$ formulas
--- first-order theories in the constant $X$. By
Lemma~\ref{lem:transfer}, an induction scheme is
forced as soon as the base theory proves induction in $x$ for the
corresponding formulas $p\Vdash\psi(x)$; the two bases of interest
line up as follows.

\begin{lemma}[Forced induction]\label{lem:forceind}
\begin{enumerate}
\item[(i)] ($\RCAs$) Every axiom of $\mathsf{EA}(X)$ is forced.
\item[(ii)] ($\RCA$) Every instance of $\mathsf{I}\Pi_1(X)$ is
forced, and with it, by deductive closure, every consequence of
$\mathsf{EA}(X)+\mathsf{I}\Pi_1(X)$ --- in particular every
instance of $\mathsf{I}\Sigma_1(X)$.
\end{enumerate}
\end{lemma}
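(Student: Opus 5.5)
Both parts go through Lemma~\ref{lem:transfer}: for each induction instance $\psi(x)$ we determine the complexity of the $L_2$ formula $p\Vdash\psi(x)$, using Lemma~\ref{lem:complexity}, and check that the base theory proves induction in $x$ for it. The non-induction axioms need separate handling.

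\emph{Part (i).} The basic axioms of $\mathsf{EA}(X)$, including the recursive axioms for $\mathsf{exp}$, are $X$-free. Each is a universal closure of a quantifier-free or $\Delta_0$ formula true in the ground model. An $X$-free sentence is forced by $p$ iff it holds. This follows by induction on the formula: atoms are forced iff true, and the clauses for $\neg$ and $\to$ collapse because the evaluation does not depend on the condition. So each basic axiom is forced.

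For an instance of $\Delta_0(X)$ induction with $\psi\in\Delta_0(X)$, Lemma~\ref{lem:complexity}(i) shows that $p\Vdash\psi(x)$ is a $\Delta^0_0$ predicate with set parameter $W_p$. $\RCAs$ has $\Delta^0_0$ induction, so the hypothesis of Lemma~\ref{lem:transfer} holds and the instance is forced.

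\emph{Part (ii).} Take a $\Pi_1(X)$ formula $\psi(x)=\forall w\,\theta(x,w)$ with $\theta\in\Delta_0(X)$. If the prenex block has several universal quantifiers, first contract it to one using a coded pairing. This is justified because $\mathsf{EA}(X)$ proves the equivalence, its axioms are forced by (i), and forcing is deductively closed by Lemma~\ref{lem:logic}(iv). By Lemma~\ref{lem:complexity}(ii), $p\Vdash\psi(x)$ is then a $\Pi^0_1$ predicate in $x$ with parameter $W_p$. $\RCA$ proves $\Pi^0_1$ induction, since it is equivalent to $\Sigma^0_1$ induction over $\RCAs$. Lemma~\ref{lem:transfer} therefore forces every instance of $\mathsf{I}\Pi_1(X)$.

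It remains to pass to consequences. The set of sentences forced by every condition contains the axioms of $\mathsf{EA}(X)+\mathsf{I}\Pi_1(X)$ and is closed under classical deduction by Lemma~\ref{lem:logic}(iv). So every theorem of $\mathsf{EA}(X)+\mathsf{I}\Pi_1(X)$ is forced, one derivation at a time, as a scheme. Finally, $\mathsf{I}\Sigma_1(X)$ follows from $\mathsf{I}\Pi_1(X)$ over $\mathsf{EA}(X)$ by the standard argument: to prove $\varphi(a)$, apply $\Pi_1$ induction to $\neg\varphi(a-y)$ for $y\le a$.

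\emph{Main obstacle.} The only delicate point is the normal form in (ii). Lemma~\ref{lem:complexity}(ii) requires a single unbounded universal quantifier in front of a $\Delta_0(X)$ matrix, so the reduction of a general $\Pi_1(X)$ instance must be done inside the forcing via deductive closure. We also need that forcing the reduced instance yields forcing of the original instance. This holds because the equivalence between the two forms is itself forced.
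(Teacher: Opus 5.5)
Your proposal is correct and follows the paper's proof essentially step for step. $X$-free axioms are forced iff they hold, each induction instance goes through Lemma~\ref{lem:transfer} using the $\Delta^0_0$ resp.\ $\Pi^0_1$ complexity of forcing from Lemma~\ref{lem:complexity}, and deductive closure then yields $\mathsf{I}\Sigma_1(X)$. The only differences are two additions. You contract a block of universal quantifiers by pairing; this is harmless but unnecessary, since iterating the $\forall$ clause already gives a $\Pi^0_1$ forcing predicate. You also spell out the derivation of $\Sigma_1$ from $\Pi_1$ induction, where the paper cites H\'ajek--Pudl\'ak.
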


\begin{proof}
(i) An $X$-free sentence is forced if and only if it holds: by
induction on the formula, forcing of an $X$-free formula does not
depend on the condition, so the quantifiers over extensions in the
clauses for $\neg$ and $\to$ are inert. So the
basic axioms, those of exponentiation included, are forced. For an
instance of $\Delta_0(X)$ induction, with induction formula
$\psi(x)\in\Delta_0(X)$, the formula $p\Vdash\psi(x)$ is
$\Delta^0_0$ with the parameters $p$ and $W_p$ by
Lemma~\ref{lem:complexity}(i), and $\RCAs$ has $\Delta^0_0$
induction with parameters; Lemma~\ref{lem:transfer} applies.

(ii) For $\psi(x)\in\Pi_1(X)$ the formula $p\Vdash\psi(x)$ is
$\Pi^0_1$ with the parameters $p$ and $W_p$ by
Lemma~\ref{lem:complexity}(ii), and $\RCA$ has $\Pi^0_1$ induction
with set parameters, the $\Sigma_1$ and $\Pi_1$ induction schemes
being equivalent over the basic axioms
\cite[Ch.~I, \S2(b)]{HajekPudlak}; Lemma~\ref{lem:transfer}
applies. The sentences forced by a fixed condition are deductively
closed (Lemma~\ref{lem:logic}(iv)) and contain, by (i) and the
above, $\mathsf{EA}(X)+\mathsf{I}\Pi_1(X)$, hence
$\mathsf{I}\Sigma_1(X)$ by the same equivalence.
\end{proof}

With the internal development complete, we pass to models and generic
filters.

\begin{definition}\label{def:generic}
Let $(M,\mathcal{S})\models\RCAs$ be countable, and interpret the
forcing of Definition~\ref{def:collapse} in $(M,\mathcal{S})$: the
conditions form a partial order $\mathbb{P}$ definable over
$(M,\mathcal{S})$, and $p\Vdash\varphi$ is the unwound $L_2$
formula evaluated there. A filter
$G\subseteq\mathbb{P}$ (upward closed and downward directed) is
\emph{generic} if it meets every dense subclass of $\mathbb{P}$ that
is definable over $(M,\mathcal{S})$ with parameters from $M$ and
$\mathcal{S}$. From a filter $G$ read off the generic object
\[
X_G:=\{\langle i,m\rangle:\exists p\in G\,(i<a_p\wedge\langle
i,m\rangle\in W_p)\},
\]
and write $(M,G)$ for $(M,X_G)$.
\end{definition}

Since $M$ and $\mathcal{S}$ are countable, there are only countably
many dense definable subclasses, so a generic filter through any
prescribed condition exists, obtained by meeting them one after
another. Forcing accounts for all truth in the generic extensions:

\begin{lemma}[Truth lemma]\label{lem:truth}
In the setting of Definition~\ref{def:generic}, for every sentence
$\varphi$ of the forcing language and every condition $p$,
\[
p\Vdash\varphi\quad\Longleftrightarrow\quad
(M,G)\models\varphi\ \text{for every generic }G\ni p;
\tag{$\dagger$}
\]
and for every generic filter $G$,
\[
(M,G)\models\varphi\quad\Longleftrightarrow\quad
\exists p\in G\ p\Vdash\varphi.
\tag{$\ddagger$}
\]
\end{lemma}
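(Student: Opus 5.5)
We prove $(\ddagger)$ first, by external induction on the sentence $\varphi$ in the official connectives $\neg,\wedge,\to,\forall$, simultaneously for all generic $G$ and all numerical parameters from $M$. Then we derive $(\dagger)$ from it.

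\emph{Atoms.} For an $X$-free atom both sides reduce to its truth in $M$; the right side uses that $G$ is nonempty. For an atom $k\in X$:
\begin{itemize}
\item If $p\in G$ forces it, then $k=\langle i,m\rangle$ with $i<a_p$ and $k\in W_p$, so $k\in X_G$ by definition.
\item Conversely, if $k\in X_G$, this is witnessed by some $p\in G$ that freezes column $i$ with $k\in W_p$, and such a $p$ forces $k\in X$.
\end{itemize}
Here we use that $X_G$ is well defined. Two members of $G$ have a common extension in $G$, so they agree on every column frozen by both; hence membership in $X_G$ of a pair in a frozen column is decided consistently.

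\emph{Conjunction.} This case uses directedness of $G$ together with persistence (Lemma~\ref{lem:logic}(i)).

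\emph{Negation.} Suppose $p\in G$ forces $\neg\varphi$. If $(M,G)\models\varphi$, the induction hypothesis gives $q\in G$ with $q\Vdash\varphi$. A common extension $r\in G$ of $p$ and $q$ then forces both $\varphi$ and $\neg\varphi$, contradicting Lemma~\ref{lem:logic}(ii). Conversely, the class $D_\varphi$ of Lemma~\ref{lem:logic}(vi) is dense and definable over $(M,\mathcal S)$ with parameters. So $G$ meets it at some $q$. If $(M,G)\not\models\varphi$, then $q$ cannot force $\varphi$ (induction hypothesis), so $q\Vdash\neg\varphi$.

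\emph{Implication.} We reduce it to negation and conjunction via $\neg(\varphi\wedge\neg\psi)$, using deductive closure (Lemma~\ref{lem:logic}(iv)), exactly as in the proof of Lemma~\ref{lem:complexity}.

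\emph{Universal quantifier.} This is the main obstacle, since the $\forall$ clause is a conjunction over all $n\in M$, possibly infinitely many. If $p\in G$ forces $\forall x\,\varphi(x)$, then $p\Vdash\varphi(n)$ for every $n$, and the induction hypothesis gives $(M,G)\models\varphi(n)$. For the converse, use the dense class
\[
E:=\{q:\ q\Vdash\forall x\,\varphi(x)\ \text{or}\ \exists n\,q\Vdash\neg\varphi(n)\}.
\]
$E$ is definable over $(M,\mathcal S)$ with parameters. It is dense: if $q\nVdash\forall x\,\varphi$, then some $n$ has $q\nVdash\varphi(n)$, and by stability (Lemma~\ref{lem:logic}(v)) some extension of $q$ forces $\neg\varphi(n)$. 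Genericity gives $q\in G\cap E$. If $q\Vdash\neg\varphi(n)$, the negation case yields $(M,G)\models\neg\varphi(n)$, contradicting the truth of $\forall x\,\varphi$. So $q\Vdash\forall x\,\varphi(x)$. Genericity is applied only to classes definable in $(M,\mathcal S)$, which is exactly what Definition~\ref{def:generic} allows.

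\emph{Deriving $(\dagger)$.} For the forward direction: if $p\Vdash\varphi$ and $G\ni p$ is generic, then $(\ddagger)$ gives $(M,G)\models\varphi$. For the reverse, suppose $p\nVdash\varphi$. By Lemma~\ref{lem:logic}(v) some $q\le p$ forces $\neg\varphi$. Since $\mathcal S$ and $M$ are countable, there is a generic filter through $q$, and being upward closed it contains $p$. By $(\ddagger)$ this $G$ satisfies $\neg\varphi$, so $\varphi$ fails in some generic extension containing $p$.
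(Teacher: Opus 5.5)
Your proof is correct and uses the same ingredients as the paper's proof: the decision class $D_\varphi$ for negation, the dense class of conditions forcing $\forall x\,\varphi$ or some $\neg\varphi(n)$ for the quantifier step, and common extensions inside $G$. The difference is only in how the induction is organized. The paper inducts on $(\dagger)$ together with the forward half of $(\ddagger)$, whereas you induct on both halves of $(\ddagger)$ and then get $(\dagger)$ at once from stability (Lemma~\ref{lem:logic}(v)) and the existence of a generic filter through any condition.
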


\begin{proof}
Replacing each subformula $\chi\to\psi$ by
$\neg(\chi\wedge\neg\psi)$ is a classical equivalence, so it
changes neither satisfaction nor, by deductive closure
(Lemma~\ref{lem:logic}(iv)), forcing; we may therefore assume the
connectives are $\neg,\wedge,\forall$.
Since $(\Leftarrow)$ of $(\ddagger)$ is contained in $(\dagger)$, we
prove $(\dagger)$ and the forward direction of $(\ddagger)$ together,
by a simultaneous induction on $\varphi$.

\emph{Atoms and conjunctions.} The atomic cases are immediate from
the definition of $X_G$, and the $\wedge$ cases too, downward
directedness giving $(\ddagger)$.

\emph{Universal quantifiers.} For $(\dagger)$, two universal
quantifiers commute: $p\Vdash\forall x\,\varphi$ iff
$\forall n\,p\Vdash\varphi(n)$ iff, by the induction hypothesis,
$\forall n\,\forall G\ni p\,(M,G)\models\varphi(n)$ iff
$\forall G\ni p\,(M,G)\models\forall x\,\varphi$. For
$(\ddagger)$, the class of conditions that
force $\forall x\,\varphi$ or force $\neg\varphi(n)$ for some $n$ is
definable, and it is dense: a condition with no extension forcing any
$\neg\varphi(n)$ forces every $\varphi(n)$ by stability
(Lemma~\ref{lem:logic}(v)), hence forces $\forall x\,\varphi$. A
generic $G$ with $(M,G)\models\forall x\,\varphi$ meets the class at
some $r$. And $r\Vdash\neg\varphi(n)$ is impossible: by $(\ddagger)$
for $\varphi(n)$ some $q\in G$ forces $\varphi(n)$, and a common
extension of $r$ and $q$ in $G$ would be an extension of $r$ forcing
$\varphi(n)$, which $r\Vdash\neg\varphi(n)$ forbids. So
$r\Vdash\forall x\,\varphi$.

\emph{Negations.} For $(\dagger)$, suppose first that
$p\nVdash\neg\varphi$; then
some $q\le p$ forces $\varphi$, so by the induction hypothesis every
generic $G\ni q$ --- one exists, and it contains $p$ --- has
$(M,G)\models\varphi$, and the right-hand side fails. Suppose
conversely that $p\Vdash\neg\varphi$ while some generic $G\ni p$ had
$(M,G)\models\varphi$. By $(\ddagger)$ for $\varphi$, some $r\in G$
forces $\varphi$, and a common extension of $p$ and $r$ in $G$ would
be a condition below $p$ forcing $\varphi$ --- against
$p\Vdash\neg\varphi$. For $(\ddagger)$, a generic $G$ with
$(M,G)\models\neg\varphi$ meets the decision class $D_\varphi$, dense
by Lemma~\ref{lem:logic}(vi), at some $r$. This $r$ cannot force
$\varphi$, since then $(M,G)\models\varphi$ by $(\dagger)$. So
$r\Vdash\neg\varphi$.
\end{proof}

\begin{definition}\label{def:rec}
For a model $M$ of the language of $\mathsf{EA}(X)$, let
$\mathsf{Rec}(M)$ be the $L_2$ model whose first-order part is $M$
with the constant $X$ forgotten, and whose second-order part is the
collection of all sets $\Delta_1(X)$-definable in $M$, number
parameters allowed.
\end{definition}

The following is a special case of \cite[Lemma~IX.1.8]{Simpson}:

\begin{lemma}\label{lem:sigmadelta}
For every model $M$ of $\mathsf{EA}(X)+\mathsf{I}\Sigma_1(X)$, we
have $\mathsf{Rec}(M)\models\RCA$.
\end{lemma}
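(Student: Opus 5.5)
We prove the lemma directly from the definitions, following the standard argument for \cite[Lemma~IX.1.8]{Simpson}. The first-order axioms of $\RCA$ (the basic axioms with exponentiation) hold because they already hold in $M$, and $\mathsf{Rec}(M)$ has exactly the first-order part of $M$. Two things remain. First, $\Delta^0_1$ comprehension: every set that is $\Delta^0_1$-definable in $\mathsf{Rec}(M)$ from members of $\mathsf{Rec}(M)$ must itself be $\Delta_1(X)$-definable in $M$. Second, $\Sigma^0_1$ induction: it must hold with set parameters from $\mathsf{Rec}(M)$. Both will be reduced to a single translation claim.

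\emph{Translation claim.} Let $\varphi(\vec x,\vec A)$ be a $\Sigma^0_1$ formula of $L_2$. Suppose each set parameter $A_j$ is given by a pair of $M$-equivalent definitions $\exists u\,\alpha_j(u,y)$ and $\forall u\,\beta_j(u,y)$ with $\alpha_j,\beta_j\in\Delta_0(X)$. Then $\mathsf{Rec}(M)\models\varphi$ is equivalent, uniformly in $\vec x$, to a $\Sigma_1(X)$ formula over $M$.

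The proof is by induction on the $\Delta^0_0$ matrix of $\varphi$. A positive atom $t\in A_j$ is replaced by $\exists u\,\alpha_j$, and a negated atom $t\notin A_j$ by $\exists u\,\neg\beta_j$. Conjunctions, disjunctions and the leading existential quantifiers are then handled by contracting existential quantifiers using pairing, which is available in $\mathsf{EA}$.

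The only nontrivial step is a bounded quantifier $\forall y\!\le\! t$ standing in front of a $\Sigma_1(X)$ formula. Here we use $\Sigma_1(X)$ collection, $\mathrm{B}\Sigma_1(X)$, which follows from $\mathsf{I}\Sigma_1(X)$ \cite{HajekPudlak}. It converts $\forall y\!\le\! t\,\exists w\,\delta$ into $\exists v\,\forall y\!\le\! t\,\exists w\!\le\! v\,\delta$, which is again $\Sigma_1(X)$. Applying the same reasoning to $\neg\varphi$ for $\Pi^0_1$ formulas, which are negations of $\Sigma^0_1$ formulas, yields $\Pi_1(X)$ translations.

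\emph{$\Delta^0_1$ comprehension.} Suppose a set is defined in $\mathsf{Rec}(M)$ by a $\Sigma^0_1$ formula that is equivalent there to a $\Pi^0_1$ formula. By the claim, it is defined in $M$ both by a $\Sigma_1(X)$ formula and by a $\Pi_1(X)$ formula. It is therefore $\Delta_1(X)$-definable in $M$, and so belongs to $\mathsf{Rec}(M)$.

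\emph{$\Sigma^0_1$ induction.} An instance of $\Sigma^0_1$ induction in $\mathsf{Rec}(M)$ with set parameters translates, again by the claim, into an instance of $\mathsf{I}\Sigma_1(X)$ in $M$. That instance holds by hypothesis, and satisfaction transfers back through the equivalence. Number parameters cause no difficulty, since they are allowed both in the definitions of the members of $\mathsf{Rec}(M)$ and in the induction schemes.

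The only substantive point in the whole argument is the bounded-quantifier step of the translation claim. This is exactly where $\mathrm{B}\Sigma_1(X)$, and hence $\mathsf{I}\Sigma_1(X)$ rather than merely $\mathsf{EA}(X)$, is needed. Everything else is bookkeeping.
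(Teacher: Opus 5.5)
Your proof is correct, and it is essentially the paper's approach. The paper gives no argument of its own and simply cites Simpson's Lemma~IX.1.8; what you wrote is the standard proof of that lemma, translating $\Sigma^0_1$ formulas with $\Delta_1(X)$-definable set parameters into $\Sigma_1(X)$ formulas over $M$. You also correctly identify the one substantive step: the bounded universal quantifier, where $\mathrm{B}\Sigma_1(X)$, available from $\mathsf{I}\Sigma_1(X)$ over $\mathsf{EA}(X)$, is exactly what is needed.
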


\begin{proof}[Proof of Lemma~\ref{lem:codingns}]
\emph{Exactness.} Fix a generic filter $G$ for the forcing
interpreted in $(M,\mathcal{S})$, and set $X:=X_G$. For $b\in M$
the class $\{p:a_p\ge b\}$ is dense (pad with empty columns), and for
$Y\in\mathcal{S}$ the class $\{p:Y=(W_p)_i\text{ for some }i<a_p\}$
is dense (append $Y$); meeting the former gives $X$ its $M$-many
columns, and meeting the latter puts every $Y\in\mathcal{S}$ among
them. Conversely, a column $(X)_i$ is frozen once a condition with
$a_p>i$ enters $G$, whence $(X)_i\in\mathcal{S}$. So the columns of
$X$ are precisely the members of $\mathcal{S}$.

\emph{Assembly.} The ground model $(M,\mathcal{S})$ satisfies
$\RCA$, so Lemma~\ref{lem:forceind}(i),(ii) holds in it: every
axiom of $\mathsf{EA}(X)$ and every instance of
$\mathsf{I}\Sigma_1(X)$ is forced in the sense of
$(M,\mathcal{S})$, and hence true in $(M,X)=(M,G)$ by
$(\ddagger)$. Thus $(M,X)\models\mathsf{EA}(X)+\mathsf{I}\Sigma_1(X)$,
and Lemma~\ref{lem:sigmadelta} gives
$\mathsf{Rec}(M,X)\models\RCA$. Let $\overline{\mathcal{S}}$ be the
second-order part of $\mathsf{Rec}(M,X)$: every member of
$\overline{\mathcal{S}}$ is $\Delta_1(X)$ in the sense of $M$
by construction, and
$\overline{\mathcal{S}}\supseteq\mathcal{S}\cup\{X\}$, the members
of $\mathcal{S}$ being columns of $X$.
\end{proof}

\begin{remark}\label{rem:fullind}
The construction preserves more induction than the lemma records. If
$(M,\mathcal{S})$ satisfies the full induction scheme --- induction
for all $L_2$ formulas --- then for fixed $\psi$ the relation
$p\Vdash\psi(x)$ is an $L_2$ formula, set
quantifiers entering through the clauses for negation and
implication, and full
induction in the ground model makes Lemma~\ref{lem:transfer}
applicable to every formula of the forcing language. So the whole
induction scheme in the constant $X$ is forced, hence true in
$(M,X)$; and quantifiers over $\overline{\mathcal{S}}$ being
replaceable by number quantifiers over $\Delta_1(X)$ definitions,
the full induction scheme holds in $(M,\overline{\mathcal{S}})$ as
well. One intermediate case is available as well, specific to
$\Pi_2$: over $\WKLs$, $\Pi^0_2$
induction implies that $\Pi_2(X)$ induction is forced. The point is
the complexity of forcing one level above
Lemma~\ref{lem:complexity}: the forcing
$p\Vdash\exists x\,\theta(x)$ of a $\Sigma_1(X)$ formula is
$\Pi^0_2$-expressible --- though this does require some work to
verify --- whence $p\Vdash\psi(x)$ is $\Pi^0_2$ for
$\psi(x)\in\Pi_2(X)$, and Lemma~\ref{lem:transfer} applies. The
tower construction below needs none of this, so we record
it only as a remark.
\end{remark}

The base $\RCA$ of Lemma~\ref{lem:codingns} enters at a single
point: the assembly needs $\mathsf{I}\Sigma_1(X)$ to be forced, and
obtains it, through deductive closure, from the forced
$\mathsf{I}\Pi_1(X)$ of Lemma~\ref{lem:forceind}(ii), whose proof
runs the induction transfer on a $\Pi^0_1$ forcing predicate and so
uses $\Sigma^0_1$ induction in the ground model. Over the weak base
$\RCAs$ the natural substitute is the collection scheme
$\mathrm{B}\Sigma_1(X)$. A positive answer to the following problem
would yield an analogue of Lemma~\ref{lem:codingns} with $\RCAs$ in
place of $\RCA$, and with it the prospect of running the tower
construction of the next subsection over the weak bases directly.

\begin{problem}\label{prob:forcebsig}
Does $\RCAs$ prove that every instance of the collection scheme
$\mathrm{B}\Sigma_1(X)$,
\[
\forall x\!<\!a\,\exists w\,\theta(x,w)\ \longrightarrow\
\exists b\,\forall x\!<\!a\,\exists w\!<\!b\,\theta(x,w),
\qquad\theta\in\Delta_0(X),
\]
is forced in the listing forcing of Definition~\ref{def:collapse}?
Does $\WKLs$?
\end{problem}

\subsection{Transfinite towers of extensions and reflection}

In this subsection we put the pieces together. Iterating the coding
lemma along a chain of length $\omega_1$ produces a model with the
continuous covering property, which Lemma~\ref{lem:covering}
converts into reflection; the conservation theorems then follow by
starting the tower from a countable model of the negated sentence.

The following lemma is a standard result, proved by Harrington by
forcing with infinite subtrees \cite[Theorem~IX.2.1]{Simpson};
alternatively, it can be obtained from the H\'ajek--Ku\v{c}era
formalization of the low basis theorem in $\mathsf{I}\Sigma_1(X)$
\cite{HajekKucera,Hajek1993}.

\begin{lemma}\label{lem:wklext}
Every countable model $(M,\mathcal{S})$ of $\RCA$ extends, over the
same first-order universe, to a countable model $(M,\mathcal{S}')$
of $\WKL_0$, $\mathcal{S}'\supseteq\mathcal{S}$.
\end{lemma}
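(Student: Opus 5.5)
The plan is Harrington's: iterate forcing with infinite subtrees of $2^{<\mathbb{N}}$ along an $\omega$-chain of $\Delta_1$-closures. Fix an enumeration $(T_k)_{k\in\omega}$ of all trees that will ever occur. The target is an increasing chain $\mathcal{S}=\mathcal{S}_0\subseteq\mathcal{S}_1\subseteq\cdots$ of countable families, each satisfying $\RCA$ over $M$, with $\mathcal{S}':=\bigcup_n\mathcal{S}_n$. To form $\mathcal{S}_{n+1}$, take the least $k$ (in a bookkeeping that revisits every pair) such that $T_k\in\mathcal{S}_n$ is, in the sense of $(M,\mathcal{S}_n)$, an infinite subtree of $2^{<\mathbb{N}}$ still lacking a path. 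Adjoin a generic path $G$ through $T_k$ and let $\mathcal{S}_{n+1}$ consist of the sets $\Delta_1(G\oplus A)$-definable in $M$ for $A\in\mathcal{S}_n$. Since the unions of countable chains of $\RCA$-models over a fixed $M$ remain models of $\RCA$ (the axioms are $\Pi^1_1$ in form, with $\Sigma^0_1$ induction an instance for each set), and every infinite tree of $\mathcal{S}'$ appears at some stage and receives a path, $(M,\mathcal{S}')\models\WKL_0$. The same union can be phrased with $\mathsf{Rec}$ of Definition~\ref{def:rec} at each step.

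The single step is a forcing argument patterned on the listing forcing. The conditions are infinite subtrees $T\subseteq T_k$ with $T\in\mathcal{S}_n$, where ``infinite'' is read in $(M,\mathcal{S}_n)$, and extension is inclusion. The generic path $G$ is determined by the stems that all conditions in the filter eventually share. Density arguments show that $G$ is a path through $T_k$ which is unbounded in $M$, and that it is new.

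The core claim is that $(M,G\oplus A)\models\mathsf{I}\Sigma_1(G\oplus A)$ for $A\in\mathcal{S}_n$. Then Lemma~\ref{lem:sigmadelta}, applied with $X:=G\oplus A$ for finitely many $A$ combined, gives $\RCA$ for $\mathcal{S}_{n+1}$. I would attack this claim as in Lemma~\ref{lem:complexity}(ii) and Lemma~\ref{lem:forceind}(ii): show that forcing a $\Pi_1(G)$ formula $\forall w\,\theta(w)$ by a tree $T$ is equivalent to the statement that every node of $T$ satisfies $\theta$ on its initial segments. After passing to the subtree $\{\sigma\in T:\forall w<|\sigma|\,\theta[G{:=}\sigma](w)\}$, this is a $\Pi^0_1$ property in the parameter $T$. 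Lemma~\ref{lem:transfer} together with ground $\Sigma^0_1$ induction then forces $\mathsf{I}\Pi_1$, and hence $\mathsf{I}\Sigma_1$.

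The main obstacle is the density of the sets deciding $\Sigma_1$ formulas. Given $T$ and $\exists w\,\theta$, the pruned tree $T'=\{\sigma\in T:\neg\exists w<|\sigma|\,\theta[\sigma](w)\}$ is $\Delta_1$ in $T$ and is either infinite, in which case it forces $\neg\exists w\,\theta$, or finite. In the finite case I must extract, using only $\mathrm{I}\Sigma_1$ in $(M,\mathcal{S}_n)$, a node $\sigma$ forcing $\exists w\,\theta$ with $T{\upharpoonright}\sigma$ infinite. The trouble is that over nonstandard $M$ the statement ``$T{\upharpoonright}\sigma$ infinite'' is $\Pi^0_1$, so it is not obvious that some extendible node survives. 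Here I would use bounded $\Sigma^0_1$ comprehension to argue that the set of level-$\ell$ nodes of $T$ with infinitely many extensions is nonempty, with $\ell$ the height bound of $T'$. If this combinatorics gets stuck, I would fall back on the citation given right before the lemma, to the H\'ajek--Ku\v{c}era formalized low basis theorem: take a low path through $T_k$ relative to $\bigoplus$ of a finite part of $\mathcal{S}_n$, using bookkeeping to handle only finitely generated pieces, and lowness transfers $\mathrm{I}\Sigma_1$ directly.
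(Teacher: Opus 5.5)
Your outer frame (an $\omega$-chain with bookkeeping, closure under $\Delta_1(G\oplus A)$, preservation of $\RCA$ under unions over the fixed $M$) is fine. It is the frame of Harrington's argument, which the paper cites instead of proving. The gap is in the core claim $(M,G\oplus A)\models\mathrm{I}\Sigma_1(G\oplus A)$.

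Your proposed equivalence, that $T\Vdash\forall w\,\theta(w)$ iff every node of $T$ satisfies $\theta$, is false. $T$ may contain nodes violating $\theta$ that have only $M$-finitely many extensions. Such dead ends carry no path and no infinite subtree, so they do not stop $T$ from forcing $\forall w\,\theta$. Nor can they be pruned inside $(M,\mathcal{S}_n)$. The extendible part of $T$ is only $\Pi^0_1$, and an infinite subtree without dead ends would compute its leftmost path. So no such subtree exists in $\mathcal{S}_n$ below a tree $T_k$ that still lacks a path.

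Computed honestly, $T\Vdash\neg(m\in G)$ says that the subtree of nodes of $T$ not having bit $0$ at position $m$ is finite, a $\Sigma^0_1$ condition. Likewise, $T\Vdash\forall w\,\theta(x,w)$ says that every node of $T$ violating some $\theta(x,w)$ is a dead end, which is $\Pi^0_2$ in $x$. Ground $\Sigma^0_1$ induction therefore does not supply the hypothesis of Lemma~\ref{lem:transfer}.

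If you instead force atoms strongly (``all nodes at level $m+1$''), the stability of atoms from Remark~\ref{rem:kripke} fails. A tree that has nodes with bit $0$ at $m$, all of them dead ends, neither forces $m\in G$ nor has an extension forcing $\neg(m\in G)$. With stability you lose Lemma~\ref{lem:logic}(iv),(v) and the proof of the truth lemma, which is what turns forced induction into true induction. Passing to $T\cap T_\theta$ is a density move for one formula. Lemma~\ref{lem:transfer} instead needs a description of forcing at a fixed condition, uniformly in $x$.

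What is missing is a \emph{uniform} decision of a bounded block of $\Sigma_1$ instances. Your stated main obstacle is not where the difficulty lies. Deciding a single instance is immediate: if the pruned tree $T'$ has no node at some level $\ell$, every level-$\ell$ node of $T$ already exhibits a witness, so $T$ itself forces $\exists w\,\theta$. However, deciding instances one at a time is compatible with $\{x:\varphi(x,G)\}$ being a proper cut of a nonstandard $M$. This is the phenomenon the paper notes for naive coding, and it blocks $\mathrm{I}\Sigma_1(G)$.

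One way to get uniformity is the following. Take $\varphi(x)\equiv\exists w\,\theta(x,w)$ with $\theta\in\Delta_0(G\oplus A)$, a bound $a$, and $M$-finite $s\subseteq\{0,\dots,a-1\}$. Let $T^s$ consist of the nodes of $T$ exhibiting no witness for any $x<a$ with $x\notin s$.
\begin{itemize}
\item ``$T^s$ is infinite'' is $\Pi^0_1$ in $s$ and holds for $s=\{0,\dots,a-1\}$. So $\Sigma^0_1$ induction in $(M,\mathcal{S}_n)$ yields such an $s$ of least cardinality.
\item Then $T^s\le T$ forces $\{x<a:\varphi(x)\}=s$. Indeed, a path through $T^s$ with no witness for some $x\in s$ would make $T^{s\setminus\{x\}}$ infinite, contradicting minimality.
\item Meeting these dense sets for all $\varphi$, $a$ and $A\in\mathcal{S}_n$ gives bounded $\Sigma_1(G\oplus A)$ comprehension, hence $\mathrm{I}\Sigma_1(G\oplus A)$.
\end{itemize}

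Your fallback has a separate defect. A path low relative to one finite join $A_0$ yields $\mathrm{I}\Sigma_1(G\oplus A_0)$. It says nothing about $G\oplus A$ for the other $A\in\mathcal{S}_n$, all of which $\mathcal{S}_{n+1}\supseteq\mathcal{S}_n\cup\{G\}$ must accommodate. That route works only for a model generated by a single set, such as $\mathsf{Rec}(M,X)$. This is the case that actually arises in Theorem~\ref{thm:towers}, and it can be reached in general by applying Lemma~\ref{lem:codingns} first.
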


\begin{theorem}\label{thm:towers}
Every countable model $(M,\mathcal{S}_0)$ of $\RCA$ extends, over
the same first-order universe, to a model $(M,\mathcal{S})$ of
$\WKL_0$ with the continuous covering property.
\end{theorem}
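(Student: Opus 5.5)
We build an increasing continuous chain $\langle\mathcal{S}_\alpha:\alpha<\omega_1\rangle$ of countable families by transfinite recursion, with each $(M,\mathcal{S}_\alpha)\models\WKL_0$, and set $\mathcal{S}:=\bigcup_{\alpha<\omega_1}\mathcal{S}_\alpha$.

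\emph{The recursion.} First replace $\mathcal{S}_0$ by a countable $\WKL_0$ extension using Lemma~\ref{lem:wklext}. At a successor stage, given countable $(M,\mathcal{S}_\alpha)\models\WKL_0$, we apply Lemma~\ref{lem:codingns}. This gives $\overline{\mathcal{S}_\alpha}\ni X_\alpha$ with $(M,\overline{\mathcal{S}_\alpha})\models\RCA$ and with the sections of $X_\alpha$ exactly the members of $\mathcal{S}_\alpha$. Then Lemma~\ref{lem:wklext} extends $\overline{\mathcal{S}_\alpha}$ to a countable $\mathcal{S}_{\alpha+1}$ with $(M,\mathcal{S}_{\alpha+1})\models\WKL_0$. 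The family $\overline{\mathcal{S}_\alpha}$ is countable because its members are $\Delta_1(X_\alpha)$-definable with parameters from the countable $M$. At limits $\lambda$ we take $\mathcal{S}_\lambda:=\bigcup_{\alpha<\lambda}\mathcal{S}_\alpha$. This union is countable, and it models $\WKL_0$ because $\WKL_0$ is axiomatized by $\Pi^1_2$ sentences $\forall\vec X\,\exists\vec Y\,\theta$ with $\theta$ arithmetical. Any finitely many sets in the union lie in some $\mathcal{S}_\alpha$; there we find witnesses, and arithmetical facts are absolute since $M$ is fixed. The same argument shows $(M,\mathcal{S})\models\WKL_0$.

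\emph{Continuous covering.} Let $\mathcal{I}:=\{X_\alpha:\alpha<\omega_1\}$, so $\mathcal{S}_{X_\alpha}=\mathcal{S}_\alpha$.
\begin{itemize}
\item[] \emph{Covering.} An $M$-indexed family in $\mathcal{S}$ is countable, since $M$ is countable. By regularity of $\omega_1$ it lies in some $\mathcal{S}_\alpha=\mathcal{S}_{X_\alpha}$.
\item[] \emph{Continuity.} Let $C\subseteq\mathcal{I}$ be countable with directed section families. Then $C=\{X_\alpha:\alpha\in A\}$ for a countable $A$, and $\bigcup_{\alpha\in A}\mathcal{S}_\alpha=\mathcal{S}_\beta$ where $\beta=\sup A$ when $A$ has no maximum, and $\beta=\max A$ otherwise. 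This uses continuity of the chain at limits. Since $\beta<\omega_1$, the set $X_\beta\in\mathcal{I}$ is the required $X'$. Directedness is automatic here because the chain is linear.
\end{itemize}

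\emph{Main obstacle.} The construction needs a ground model of $\RCA$ at every successor step, and countability must be maintained throughout. Both are supplied directly by Lemmas~\ref{lem:codingns} and~\ref{lem:wklext}. The subtle point to verify is that $X_\alpha$ codes exactly $\mathcal{S}_\alpha$ and not the larger $\overline{\mathcal{S}_\alpha}$. This is precisely the exactness clause of Lemma~\ref{lem:codingns}, and it gives $\mathcal{S}_{X_\alpha}=\mathcal{S}_\alpha$ as needed.
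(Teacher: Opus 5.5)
Your proposal is correct and follows the paper's proof. You use the same $\omega_1$-recursion: exact generic listing by Lemma~\ref{lem:codingns}, then a Harrington extension by Lemma~\ref{lem:wklext}, and unions at limits. You take the same covers $\mathcal{I}=\{X_\alpha:\alpha<\omega_1\}$, with covering and continuity coming from regularity of $\omega_1$, exactness of the listing, and continuity of the chain. Your preliminary $\WKL_0$-extension of stage $0$ and your explicit checks are harmless additions that the paper leaves implicit. Those checks are the countability of $\overline{\mathcal{S}_\alpha}$ via $\Delta_1(X_\alpha)$-definability and the preservation of the $\Pi^1_2$ axioms under unions.
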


\begin{proof}
The chain is defined by recursion on $\alpha<\omega_1$:
\begin{itemize}
\item $\mathcal{S}_0$ is the given family;
\item at stage $\alpha+1$, code all of $\mathcal{S}_\alpha$ into the
sections of a single $X_\alpha$ by Lemma~\ref{lem:codingns}, and
extend the family $\overline{\mathcal{S}_\alpha}\ni X_\alpha$
produced there to a countable $\mathcal{S}_{\alpha+1}$ with
$(M,\mathcal{S}_{\alpha+1})\models\WKL_0$ by
Lemma~\ref{lem:wklext};
\item at limit $\lambda$, take
$\mathcal{S}_\lambda:=\bigcup_{\alpha<\lambda}\mathcal{S}_\alpha$.
\end{itemize}

Let $\mathcal{S}:=\bigcup_{\alpha<\omega_1}\mathcal{S}_\alpha$. Then
$(M,\mathcal{S})\models\WKL_0$, as the union of an increasing chain
of models of $\WKL_0$ with the fixed first-order part $M$; the limit
stages are models of $\WKL_0$ for the same reason, so the recursion
goes through. It remains to verify the continuous covering property,
with $\mathcal{I}:=\{X_\alpha : \alpha<\omega_1\}$ as the family of
covers. First, every $M$-indexed family in $\mathcal{S}$ is
contained in the section family of some member of $\mathcal{I}$:
since $M$ is countable, an $M$-indexed family is just a countable
subfamily $(Y_i : i<\omega)$; taking $\beta_i$ to be the least level
with $Y_i\in\mathcal{S}_{\beta_i}$, the family lies in the stage
$\mathcal{S}_\alpha$ for the countable ordinal
$\alpha:=\sup_i\beta_i$, and $\mathcal{S}_\alpha$ is
\emph{exactly} the section family of $X_\alpha$ by
Lemma~\ref{lem:codingns} --- a property of $(M,X_\alpha)$ alone,
untouched by the later growth of the family. Second, the section
families of the members of $\mathcal{I}$, that is, the stages, are
closed under unions of countable upwards directed families: the
stages are linearly ordered by inclusion, so a directed family of
stages is a chain, and by continuity of the tower the union of a
chain of stages is the stage at the supremum of its indices.
\end{proof}

The listings are exact: the sections of $X_\alpha$ are precisely the
stage $\mathcal{S}_\alpha$. This is what made the covering
continuous, and so, by Lemma~\ref{lem:covering}(ii), it also yields
reflection.

\begin{corollary}\label{cor:rfnext}
Every countable model of $\RCA$ extends, over the same first-order
universe, to a model of $\WKL_0+\Rfns$.
\end{corollary}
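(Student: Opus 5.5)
This follows by combining Theorem~\ref{thm:towers} with Lemma~\ref{lem:covering}(ii). Given a countable model $(M,\mathcal{S}_0)\models\RCA$, Theorem~\ref{thm:towers} produces an extension $(M,\mathcal{S})$ over the same first-order universe, with $(M,\mathcal{S})\models\WKL_0$ and with the continuous covering property. The cover family is $\mathcal{I}=\{X_\alpha:\alpha<\omega_1\}$, and each $X_\alpha$ has as its sections exactly the stage $\mathcal{S}_\alpha$. It then remains only to check that this model satisfies every instance of $\Rfns$.

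For that we invoke Lemma~\ref{lem:covering}(ii). Its hypothesis is $(M,\mathcal{S})\models\RCAs$, which holds because $\WKL_0\supseteq\RCA\supseteq\RCAs$, together with the continuous covering property just obtained. The lemma gives $\Rfns_\varphi$ in $(M,\mathcal{S})$ for each formula $\varphi$ separately, and hence the full scheme $\Rfns$.

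The one point to check is that the counting in the proof of Lemma~\ref{lem:covering}(ii) fits this model.
\begin{itemize}
\item At each step of the Löwenheim--Skolem chain we need at most $|M|$ witnesses. Since $M$ is countable, the witnesses form a countable family.
\item Every countable subfamily of $\mathcal{S}$ lies inside a single stage $\mathcal{S}_\alpha$ with $\alpha<\omega_1$. This uses the uncountable cofinality of $\omega_1$, and it is already built into Theorem~\ref{thm:towers}.
\item The countable chain $\mathcal{S}_{X_0}\subseteq\mathcal{S}_{X_1}\subseteq\cdots$ must have its union realized as the section family of a single cover. By continuity of the tower, this union is the stage at the supremum of the indices, which is again some $\mathcal{S}_{X_\beta}$ with $X_\beta\in\mathcal{I}$.
\end{itemize}
The witness $X_\beta$ is a genuine member of $\mathcal{S}$, and its sections, computed inside $M$, are exactly $\mathcal{S}_\beta$. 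Therefore $\varphi^{X_\beta}$ expresses satisfaction in $(M,\mathcal{S}_\beta)$, as the proof of Lemma~\ref{lem:covering}(ii) requires.

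No step here is a real obstacle. All the substantive work has already been done in Lemma~\ref{lem:codingns}, which keeps $\ISig$ in the extension and makes the listing exact, and in the $\omega_1$-recursion of Theorem~\ref{thm:towers}.
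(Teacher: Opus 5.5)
Your proposal is correct and matches the paper's proof, which is exactly the combination of Theorem~\ref{thm:towers} (an extension to $\WKL_0$ with the continuous covering property) with Lemma~\ref{lem:covering}(ii). Your itemized checks just re-verify counting and continuity details that the paper already handles inside those two results.
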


\begin{proof}
Extend it by Theorem~\ref{thm:towers};
Lemma~\ref{lem:covering}(ii) turns the continuous covering property
into $\Rfns$.
\end{proof}

\begin{theorem}\label{thm:lr}
$\WKL_0+\Rfns$ is $\Pi^1_1$-conservative over $\RCA$; hence its
first-order part is exactly the set of
$\mathrm{I}\Sigma_1$-provable sentences.
\end{theorem}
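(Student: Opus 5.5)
The plan is the standard model-theoretic conservation argument via completeness, with Corollary~\ref{cor:rfnext} doing all the work. Let $\sigma$ be a $\Pi^1_1$ sentence not provable in $\RCA$; we show $\WKL_0+\Rfns\nvdash\sigma$. Write $\sigma$ as $\forall X\,\theta(X)$ with $\theta$ arithmetical. Since $\RCA+\neg\sigma$ is consistent, the completeness theorem together with downward L\"owenheim--Skolem gives a countable model $(M,\mathcal{S}_0)\models\RCA+\neg\sigma$. Fix $A\in\mathcal{S}_0$ with $(M,\mathcal{S}_0)\models\neg\theta(A)$.

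By Corollary~\ref{cor:rfnext}, $(M,\mathcal{S}_0)$ extends, over the same first-order universe $M$, to a model $(M,\mathcal{S})$ of $\WKL_0+\Rfns$ with $\mathcal{S}\supseteq\mathcal{S}_0$. The key observation is that an arithmetical formula with set parameter $A$ is evaluated using only $M$ and $A$; its quantifiers range over numbers only. Hence $(M,\mathcal{S})\models\neg\theta(A)$, and since $A\in\mathcal{S}$ this gives $(M,\mathcal{S})\models\neg\sigma$. So $\sigma$ is not a theorem of $\WKL_0+\Rfns$. Contraposing yields $\Pi^1_1$-conservativity over $\RCA$, and a fortiori over $\WKL_0$.

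For the second clause, first-order sentences are in particular $\Pi^1_1$, so any first-order theorem of $\WKL_0+\Rfns$ is a theorem of $\RCA$. By the classical fact that the first-order part of $\RCA$ is exactly $\mathrm{I}\Sigma_1$ \cite[\S IX.1]{Simpson}, it is therefore provable in $\mathrm{I}\Sigma_1$. For the converse inclusion, $\WKL_0+\Rfns\supseteq\RCA\vdash\mathrm{I}\Sigma_1$. Here one should note that $\mathrm{I}\Sigma_1$ is formulated in the signature with $\mathsf{exp}$, which is harmless because $\mathrm{I}\Sigma_1$ proves totality and the recursion equations of exponentiation.

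No step here is a genuine obstacle, since the substantive work is already in Theorem~\ref{thm:towers}. The only point needing care is the upward absoluteness of the witness to $\neg\sigma$. That absoluteness is exactly where keeping the first-order part fixed is essential: it guarantees $\omega$-extension, so that arithmetical truth about $A$ is preserved.
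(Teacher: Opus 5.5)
Your proposal is correct and essentially matches the paper's proof: take a countable model of $\RCA+\neg\psi$, extend it by Corollary~\ref{cor:rfnext}, note that the arithmetical failure $\neg\theta(Z)$ persists because the first-order part is fixed, and read off the first-order part from \cite[\S IX.1]{Simpson}. Your explicit check of the converse inclusion (that $\RCA$ already proves $\mathrm{I}\Sigma_1$) is a harmless addition that the paper leaves implicit.
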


\begin{proof}
If $\RCA\nvdash\psi$ for a $\Pi^1_1$ sentence
$\psi=\forall X\,\theta(X)$, fix a countable
$(M,\mathcal{S}_0)\models\RCA+\neg\psi$ with $Z\in\mathcal{S}_0$
witnessing $\neg\theta(Z)$, and extend it to a model of
$\WKL_0+\Rfns$ by Corollary~\ref{cor:rfnext}. The failure
$\neg\theta(Z)$ persists, since its truth depends only on $(M,Z)$.
For the first-order part: a first-order theorem of $\WKL_0+\Rfns$ is
in particular a provable $\Pi^1_1$ sentence, hence a theorem of
$\RCA$, and the first-order part of $\RCA$ is $\mathrm{I}\Sigma_1$
\cite[\S IX.1]{Simpson}.
\end{proof}

Composing with the Parsons--Mints theorem gives the finitistic
reducibility promised in the introduction.

\begin{corollary}\label{cor:pra}
$\WKL_0+\Rfns$ is $\Pi^0_2$-conservative over $\mathsf{PRA}$.
\end{corollary}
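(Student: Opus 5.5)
The plan is to compose Theorem~\ref{thm:lr} with the classical $\Pi^0_2$-conservativity of $\RCA$ over $\mathsf{PRA}$ (the Parsons--Mints theorem, in the form that $\mathrm{I}\Sigma_1$ is $\Pi^0_2$-conservative over $\mathsf{PRA}$, cf.\ \cite[\S IX.3]{Simpson}). Let $\psi$ be a $\Pi^0_2$ sentence in the language of first-order arithmetic, and suppose $\WKL_0+\Rfns\vdash\psi$. We want to conclude $\mathsf{PRA}\vdash\psi$.

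First, $\psi$ contains no set quantifiers, so it is in particular a $\Pi^1_1$ sentence of $L_2$ (vacuously, with an empty string of universal set quantifiers). By Theorem~\ref{thm:lr}, $\RCA\vdash\psi$. Since $\psi$ is first-order, the first-order-part clause of Theorem~\ref{thm:lr} (equivalently, the conservativity of $\RCA$ over $\mathrm{I}\Sigma_1$ from \cite[\S IX.1]{Simpson}) gives $\mathrm{I}\Sigma_1\vdash\psi$. Finally, the Parsons--Mints theorem yields $\mathsf{PRA}\vdash\psi$.

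The only point needing care is bookkeeping about languages rather than mathematics. Our $L_2$ has the symbol $\mathsf{exp}$ but no symbols for arbitrary primitive recursive functions, whereas $\mathsf{PRA}$ is formulated with all primitive recursive function symbols. I would therefore read ``$\Pi^0_2$-conservative over $\mathsf{PRA}$'' in the standard way: a $\Pi^0_2$ sentence of $L_2$'s first-order language is translated into $\mathsf{PRA}$ by reading $+,\cdot,\mathsf{exp}$ as the corresponding primitive recursive functions. Conversely, a $\Pi^0_2$ sentence of $\mathsf{PRA}$ is translated into an $\mathrm{I}\Sigma_1$-equivalent $\Pi_2$ sentence by replacing each primitive recursive term with its $\Sigma_1$-defined graph; this is provably total in $\mathrm{I}\Sigma_1$, so the quantifier complexity stays $\Pi_2$. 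These translations are exactly those under which the Parsons--Mints theorem is usually stated, so the composition goes through. I expect no genuine obstacle here: all the difficulty is carried by Theorem~\ref{thm:lr}. It is worth remarking, in view of the discussion in the introduction, that this corollary inherits the non-finitary metatheory of Theorem~\ref{thm:lr}. Only the final step, the Parsons--Mints theorem, is finitistically provable.
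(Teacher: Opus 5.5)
Your proposal is correct and matches the paper's proof: a $\Pi^0_2$ theorem of $\WKL_0+\Rfns$ is a first-order $\Pi^1_1$ sentence, so Theorem~\ref{thm:lr} places it in $\mathrm{I}\Sigma_1$, and the Parsons--Mints theorem then places it in $\mathsf{PRA}$. Your remarks about translating between the languages are sound bookkeeping that the paper leaves implicit.
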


\begin{proof}
A $\Pi^0_2$ theorem of $\WKL_0+\Rfns$ is a first-order theorem,
hence provable in $\mathrm{I}\Sigma_1$ by Theorem~\ref{thm:lr};
and $\mathrm{I}\Sigma_1$ is $\Pi^0_2$-conservative over
$\mathsf{PRA}$ --- the Parsons--Mints theorem
\cite{Parsons1970,Mints1971}; see also \cite[\S IX.3]{Simpson}.
\end{proof}

The conservation extends to the weak bases, by splitting on the
single $\Pi^1_1$ sentence $\ISig$: the towers cover the models
satisfying it, and the self-satisfaction theorem of
Section~\ref{sec:wkl} covers the rest.

\begin{corollary}\label{cor:weakcons}
$\WKLs+\Rfns$ is $\Pi^1_1$-conservative over $\RCAs$.
\end{corollary}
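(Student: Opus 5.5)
We argue semantically, splitting a countermodel according to the single $\Pi^1_1$ sentence $\ISig$, exactly as announced before the statement. Suppose $\RCAs\nvdash\psi$ for a $\Pi^1_1$ sentence $\psi=\forall X\,\theta(X)$. By completeness fix a countable $(M,\mathcal{S}_0)\models\RCAs+\neg\psi$, and fix $Z\in\mathcal{S}_0$ with $\neg\theta(Z)$. Since $\theta$ is arithmetical in $Z$, its truth depends only on $(M,Z)$. So it suffices to find, in each case, a model of $\WKLs+\Rfns$ with first-order part $M$ that contains $Z$ among its sets.

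\emph{Case 1: $(M,\mathcal{S}_0)\models\ISig$.} Then $(M,\mathcal{S}_0)\models\RCA$, since $\RCA$ is $\RCAs$ plus $\Sigma^0_1$ induction. Corollary~\ref{cor:rfnext} extends it over the same $M$ to a model of $\WKL_0+\Rfns$. This is in particular a model of $\WKLs+\Rfns$, it contains $Z$, and so $\neg\theta(Z)$ persists.

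\emph{Case 2: $(M,\mathcal{S}_0)\models\neg\ISig$.} Here we want a model of $\WKLs+\neg\ISig$ over $M$ containing $Z$; Theorem~\ref{thm:wkllr} then gives $\Rfns$ in it outright. The failure of $\ISig$ is witnessed by a specific set parameter $A\in\mathcal{S}_0$ and a $\Sigma^0_1$ formula, and this witness persists in any extension with the same first-order part containing $A$. So it is enough to extend $(M,\mathcal{S}_0)$, over the same $M$, to a model of $\WKLs$. This is the Simpson--Smith conservation argument: every countable model of $\RCAs$ has a $\WKLs$-extension with the same first-order part~\cite{SimpsonSmith}.

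If one prefers to stay syntactic in this case, one can instead use Corollary~\ref{cor:wkllrcons}. Suppose $\WKLs+\Rfns\vdash\psi$. Then $\RCAs+\neg\ISig+\Rfns\vdash\psi$, because under $\neg\ISig$ the weak K\"onig lemma comes along with Theorem~\ref{thm:wkllr} and the conservation of $\WKLs+\neg\ISig$. More simply, $\WKLs+\neg\ISig\vdash\psi$ and the Simpson--Smith theorem relativized as in \cite[\S3]{FKWY} gives $\RCAs+\neg\ISig\vdash\psi$. Likewise, $\WKL_0+\Rfns\vdash\psi$ and Theorem~\ref{thm:lr} give $\RCA\vdash\psi$, that is, $\RCAs+\ISig\vdash\psi$. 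Combining the two cases yields $\RCAs\vdash\psi$.

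The main obstacle is making sure the $\neg\ISig$ case really lands in $\WKLs+\neg\ISig$, which needs two things. First, the Simpson--Smith extension must keep the first-order part fixed and preserve the failure of $\ISig$. This holds because the failing instance involves a parameter already present. Second, the $\Pi^1_1$ sentence $\neg\ISig$ must be handled correctly in the syntactic splitting: $\psi\vee\ISig$ and $\psi\vee\neg\ISig$ are not $\Pi^1_1$ as stated, so the semantic route above is cleaner, and I would present that version.
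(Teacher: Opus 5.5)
Your proposal is correct and matches the paper's proof: split a countable model of $\RCAs+\neg\psi$ on $\ISig$, and in the $\ISig$ case extend by Corollary~\ref{cor:rfnext}. In the $\neg\ISig$ case, extend to $\WKLs$ over the same first-order part by the relativized Simpson--Smith theorem, note that $\neg\ISig$ and $\neg\theta(Z)$ persist because they are arithmetical in old parameters, and apply Theorem~\ref{thm:wkllr}. (One small slip in your closing remarks, which does not affect the argument: $\ISig$ is $\Pi^1_1$ and $\neg\ISig$ is $\Sigma^1_1$, so $\psi\vee\ISig$ is in fact $\Pi^1_1$; your ``more simply'' syntactic variant via the two relative conservation results is also valid.)
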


\begin{proof}
Recall that $\ISig$ is $\Sigma^0_1$ induction with set parameters, a
single sentence, and that $\RCAs+\ISig=\RCA$. Let
$\psi=\forall X\,\theta(X)$ be a $\Pi^1_1$ sentence with
$\RCAs\nvdash\psi$, and fix a countable
$(M,\mathcal{S})\models\RCAs+\neg\psi$ with $Z\in\mathcal{S}$
witnessing $\neg\theta(Z)$. If $(M,\mathcal{S})\models\ISig$, it is
a model of $\RCA+\neg\psi$, and Corollary~\ref{cor:rfnext} extends
it to a model of $\WKL_0+\Rfns+\neg\psi$, as in the proof of
Theorem~\ref{thm:lr} --- a fortiori a model of
$\WKLs+\Rfns+\neg\psi$. If $(M,\mathcal{S})\models\neg\ISig$, extend
it, over the same first-order universe, to a model of $\WKLs$ by the
Simpson--Smith theorem relativized as in \cite[\S3]{FKWY}. The
failures $\neg\ISig$ and $\neg\theta(Z)$ are arithmetical in set
parameters from $\mathcal{S}$, so they persist, and
Theorem~\ref{thm:wkllr} gives $\Rfns$ in the extension, which is
thus a model of $\WKLs+\Rfns+\neg\psi$. Either way
$\WKLs+\Rfns+\neg\psi$ is consistent, so $\WKLs+\Rfns\nvdash\psi$.
\end{proof}

\begin{remark}\label{rem:lr}
It is worth recording what the \emph{exactness} of the coding lemma
--- the sections of the new set list the given family precisely,
rather than merely containing it --- contributes to
$\omega_1$-tower constructions like that of
Theorem~\ref{thm:towers}. Suppose the tower were built from inexact
covers, the sections of $X_\alpha$ merely containing
$\mathcal{S}_\alpha$. Countable subfamilies of the union would still
lie among the sections of single covers, so the union would still
have the covering property, and with it collection for all $L_2$
formulas by Lemma~\ref{lem:covering}(i). But the section family of
an inexact cover carries junk that need appear at no stage of the
chain. Nothing then keeps the section families of the covers closed
under unions of chains, the continuity clause of
Definition~\ref{def:covering}(ii) is not secured, and the proof of
reflection breaks down.
\end{remark}

\section{Optimality: reflection and uniformly enumerated
universes}\label{sec:optimality}

We now show that the conservation of Theorem~\ref{thm:lr} is optimal:
it stops at $\Pi^1_1$. Call a model $(\mathbb{M},\mathcal{S})$ of
$\RCAs$ \emph{uniformly enumerated} if for some $L_2$ formula
$\varphi(x,y)$, possibly with set parameters,
\[
\forall X\,\exists x\,\forall y\,(y\in X\leftrightarrow\varphi(x,y)),
\tag{$U_\varphi$}
\]
so that a single formula names every set of the model by a number.
Such models are not pathological. In $\mathrm{REC}$, the minimum
$\omega$-model of $\RCA$, consisting of the recursive sets,
$U_\varphi$ holds for a universal $\Sigma^0_1$ formula $\varphi$;
in $\mathrm{ARITH}$, the minimum $\omega$-model of $\ACA$,
consisting of the arithmetical sets, it holds for a $\Delta^1_1$
formula enumerating the arithmetical sets by iterated-jump indices
\cite[\S\S I.4, I.7]{Simpson}. And there are $\omega$-models of
$\WKL_0$ in which every set is low. Such models are easily
obtained from the Low Basis Theorem of Jockusch and Soare
\cite{JockuschSoare}: iteratively append, to the sets so far, a
low path through a low infinite binary tree, the relativized form
of the theorem keeping each step low; see \cite[\S
VIII.2]{Simpson}. In them every set is $\Delta^0_2$, so
$U_\varphi$ holds for a universal $\Sigma^0_2$ formula. Note that
for arithmetical $\varphi$ the sentence $U_\varphi$ is, of course,
$\Pi^1_1$.

\begin{theorem}\label{thm:unif}
Over $\RCAs$, for every $L_2$ formula $\varphi(x,y)$ the instance
$\Rfns_\Phi$, with $\Phi$ the subformula closure of
$\exists X\,\forall y\,(y\in X\leftrightarrow\varphi(x,y))$, refutes
$U_\varphi$. Thus reflection fails in every uniformly enumerated model;
and for arithmetical $\varphi$ the refuting instance lies in $\Rfn{1}$.
\end{theorem}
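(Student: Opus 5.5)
We argue by diagonalization inside a reflecting model: $U_\varphi$ reflects down to the coded model $Y$, and then $Y$ itself, being a set, lets us define a set outside $Y$ that $\varphi$ must still name. Reason in $\RCAs+U_\varphi$ and suppose $\Rfns_\Phi$, with $\Phi$ the subformula closure of $\chi(x):=\exists X\,\forall y\,(y\in X\leftrightarrow\varphi(x,y))$. Take the set parameters $\vec Q$ of $\varphi$ and apply $\Rfns_\Phi$ to get $Y$ with $\vec Q\dot\in Y$ that is elementary for every member of $\Phi$. In particular it is elementary for $\chi(x)$ and for $\varphi(x,y)$ itself, the latter being a subformula.

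First we transfer $U_\varphi$ into $Y$. Fix $z$ and consider the section $(Y)_z$. By $U_\varphi$ in the universe there is $x$ with $(Y)_z=\{y:\varphi(x,y)\}$. Since $\varphi$ is absolute between $Y$ and the universe at parameters from $Y$, we get $\varphi(x,y)\leftrightarrow\varphi^Y(x,y)$ for all $y$. So every section of $Y$ is named by some $x$ through $\varphi^Y$.

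Next comes the diagonal step. Define
\[
D:=\{x: x\notin (Y)_{z(x)}\},
\]
where $z(x)$ should be a section index naming the set $\{y:\varphi(x,y)\}$ when that set is a section. The obstacle is that, over $\RCAs$, we cannot form such a $D$ from the non-arithmetical $\varphi$ directly. The remedy is to diagonalize against the sections of $Y$, which needs only $\Delta^0_1$ comprehension in $Y$: put $D:=\{z: z\notin (Y)_z\}$. This $D$ is a set, and by $U_\varphi$ there is $x_0$ with $D=\{y:\varphi(x_0,y)\}$. Hence $\chi(x_0)$ holds, witnessed by $D$. Elementarity for $\chi$ then gives $z_0$ with $(Y)_{z_0}=\{y:\varphi^Y(x_0,y)\}$. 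By elementarity for $\varphi$ at the number parameters $x_0,y$ (with set parameters $\vec Q$ only), $\varphi^Y(x_0,y)\leftrightarrow\varphi(x_0,y)$. So $(Y)_{z_0}=D$, and therefore $z_0\in D\leftrightarrow z_0\notin(Y)_{z_0}=D$, a contradiction. The step to double-check is this use of elementarity: the existential witness for $\chi(x_0)$ may be taken among sections, and the reflected $\varphi^Y$ agrees with $\varphi$. Both hold because $\varphi$ and $\chi$ are in $\Phi$ and $\vec Q\dot\in Y$.

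This argument does not even need the first step; it needs only elementarity for $\chi$ and $\varphi$. For arithmetical $\varphi$, $\chi$ is $\Sigma^1_1$ and $\varphi$ is $\Sigma^1_0$, so $\Phi\subseteq\Sigma^1_1\cup\Pi^1_1$. By the remark after Definition~\ref{def:lr}, the simultaneous instance is provable in $\Rfn{1}$.
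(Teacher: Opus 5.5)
Your proof is correct and follows the paper's argument. It uses the same diagonal set $D=\{z:\langle z,z\rangle\notin Y\}$, then elementarity for $\chi$ to get a section $(Y)_{z_0}$ satisfying the relativized defining condition at $x_0$, then absoluteness to identify that section with $D$; you invoke elementarity for $\varphi$ inside $\sigma^Y$ where the paper uses elementarity for $\sigma$, which amounts to the same step, and, as you note yourself, the preliminary transfer of $U_\varphi$ into $Y$ is not needed.
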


\begin{proof}
Write $\sigma(x,X)$ for $\forall y\,(y\in
X\leftrightarrow\varphi(x,y))$ and $\chi(x)$ for
$\exists X\,\sigma(x,X)$. Suppose $U_\varphi$, and apply $\Rfns_\Phi$
with the parameters of $\varphi$, obtaining $Y$ containing them among
its sections and elementary for $\chi$ and $\sigma$. The diagonal set
$D:=\{z:\langle z,z\rangle\notin Y\}$ exists by $\Delta^0_1$
comprehension and differs from every section of $Y$:
$z\in D\leftrightarrow z\notin(Y)_z$. By $U_\varphi$ there is $e$ with
$\sigma(e,D)$, so $\chi(e)$ holds, and by elementarity $\chi^Y(e)$:
some section $(Y)_z$ has $\sigma^Y(e,(Y)_z)$. Then $\sigma(e,(Y)_z)$
by elementarity for $\sigma$ (for arithmetical $\varphi$, $\sigma$ is
arithmetical and absolute outright). But then
$(Y)_z=\{y:\varphi(e,y)\}=D$, and $D$ is no section: a
contradiction. For arithmetical $\varphi$ the formula $\chi$ is
equivalent to a $\Sigma^1_1$ formula, so the instance lies in
$\Rfn{1}$.
\end{proof}

\begin{corollary}\label{cor:unif}
$\RCA+\Rfn{1}$ is not $\Sigma^1_1$-conservative over $\RCA$, and
$\WKL_0+\Rfn{1}$ is not $\Sigma^1_1$-conservative over $\WKL_0$.
\end{corollary}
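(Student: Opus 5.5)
The plan is to exhibit a single $\Sigma^1_1$ sentence that is provable in $\RCA+\Rfn{1}$ but fails in some $\omega$-model of $\WKL_0$. Let $\varphi_2(x,y)$ be a universal $\Sigma^0_2$ formula, so that $\varphi_2(e,y)$ says that $y$ belongs to the $e$-th $\Sigma^0_2$ set. The sentence will be $\neg U_{\varphi_2}$. Since $\varphi_2$ is arithmetical, $U_{\varphi_2}$ is (equivalent to) a $\Pi^1_1$ sentence, so $\neg U_{\varphi_2}$ is $\Sigma^1_1$.

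First, provability. By Theorem~\ref{thm:unif}, applied to the arithmetical formula $\varphi_2$, the instance $\Rfns_\Phi$ refutes $U_{\varphi_2}$ over $\RCAs$. Here $\Phi$ is the subformula closure of $\exists X\,\forall y\,(y\in X\leftrightarrow\varphi_2(x,y))$, and the theorem also tells us that this instance lies in $\Rfn{1}$. Two small points need attention. The first is that $\Rfn{1}$ consists of single instances $\Rfns_\varphi$, not simultaneous ones. This is harmless, because the discussion after Definition~\ref{def:lr} gives $\Rfn{1}\vdash\Rfns_\Phi$ for finite $\Phi$ of $\Sigma^1_1$ formulas up to equivalence, and the members of $\Phi$ are arithmetical or $\Sigma^1_1$. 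The second is that $\varphi_2$ has no set parameters, so nothing further has to be checked. Thus $\RCA+\Rfn{1}\vdash\neg U_{\varphi_2}$, and a fortiori $\WKL_0+\Rfn{1}\vdash\neg U_{\varphi_2}$.

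Second, unprovability. The paper recalls an $\omega$-model $\mathcal{L}$ of $\WKL_0$ in which every set is low. It is built by iterating the relativized low basis theorem from $\mathrm{REC}$, adding a low path through each infinite tree, and since lowness composes, every set in it is $\Delta^0_2$. Every $\Delta^0_2$ set is of the form $\{y:\varphi_2(e,y)\}$ for some $e$, so $\mathcal{L}\models U_{\varphi_2}$; the witnessing $e$ is a standard number, which is fine because the model is an $\omega$-model. Hence $\mathcal{L}$ is a model of $\WKL_0$, and therefore of $\RCA$, in which $\neg U_{\varphi_2}$ fails. So neither $\WKL_0$ nor $\RCA$ proves it, which gives both non-conservation claims at once.

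The only step that is more than bookkeeping is checking the complexity claims: that $U_{\varphi_2}$ really is equivalent to a $\Pi^1_1$ sentence, and that each formula of $\Phi$ is $\Sigma^1_1$ or $\Pi^1_1$ up to provable equivalence in the base theory. For the second point, the base theory relativizes to the reflecting $Y$, so the remark after Definition~\ref{def:lr} transfers elementarity across these equivalences. I do not expect a genuine obstacle here; the substantive work was already done in Theorem~\ref{thm:unif}.
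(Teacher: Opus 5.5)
Your proof is correct and follows the paper's argument: you derive the $\Sigma^1_1$ sentence $\neg U_{\varphi}$ from Theorem~\ref{thm:unif} and refute it in an $\omega$-model where the uniform enumeration holds. The only difference is that the paper uses a universal $\Sigma^0_1$ formula with $\mathrm{REC}$ for the $\RCA$ claim and keeps $\varphi_2$ and the low models for $\WKL_0$, whereas you correctly note that the single sentence $\neg U_{\varphi_2}$ settles both, since the low $\omega$-model of $\WKL_0$ is also a model of $\RCA$.
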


\begin{proof}
Let $\varphi_1$ and $\varphi_2$ be universal $\Sigma^0_1$ and
$\Sigma^0_2$ formulas. The sentences $\neg U_{\varphi_1}$ and
$\neg U_{\varphi_2}$ are $\Sigma^1_1$, and both are provable in
$\RCA+\Rfn{1}$ by Theorem~\ref{thm:unif}. But
$\RCA\nvdash\neg U_{\varphi_1}$ and
$\WKL_0\nvdash\neg U_{\varphi_2}$, since these sentences fail in
$\mathrm{REC}$ and in the low $\omega$-models respectively.
\end{proof}

The unprovability these $\omega$-models witness is not special to the
weak bases: by Theorem~\ref{thm:acalr} and \cite[\S VII.6]{Simpson},
$\Rfn{1}$ is equivalent over $\ACA$ to $\Pi^1_1\text{-CA}_0$, so even
$\ACA$ does not prove it. 

\begin{remark}\label{rem:rc}
The $\Pi^1_2$ consequences of $\Rfns$ can also be described exactly.
For an $L_2$ formula $\varphi$ and a number $n$, using $\Rfns$ we prove
over $\RCA$ ($\WKL_0$) the
$\Pi^1_2$ sentence asserting that above any given set there is a
finite chain of $n$ coded $\omega$-models of $\RCA$ ($\WKL_0$), each a
section of the next, any two of which agree on $\varphi$ at
parameters from the lower one.
Using model-theoretic methods it is easy to show that
these sentences exhaust the $\Pi^1_2$ consequences of $\Rfns$ over the
respective base: every $\Pi^1_2$
theorem of $\RCA+\Rfns$ ($\WKL_0+\Rfns$) is provable in $\RCA$ ($\WKL_0$) together with these
sentences. Namely, if a $\Sigma^1_2$ sentence $\sigma$ is consistent with $\RCA$ ($\WKL_0$) together with these
sentences, then we first use the compactness theorem to construct a model with constants $C,K_0,K_1,\ldots$
such that $C$ witnesses the outer existential quantifier of $\sigma$
and $K_0,K_1,\ldots$ is a chain of coded $\omega$-models of $\RCA$
($\WKL_0$), each next containing the previous as a section and $C$ a
section of $K_0$, any two of which agree on every $L_2$ formula at
parameters from the lower one; then we
finish the proof by constructing a model of $\RCA+\Rfns+\sigma$ ($\WKL_0+\Rfns+\sigma$)
by restricting the second-order part to the sets that are sections of at least one $K_i$.
Note that by Corollary~\ref{cor:unif}, the bases do not prove
all of these sentences.
\end{remark}

\section{The metamathematics of the conservation}\label{sec:onecon}

The conservativity of Theorem~\ref{thm:lr} is, as a statement about
proofs, $\Pi^0_2$. But the proof just given is strongly non-finitistic:
the tower has length $\omega_1$, and the argument as written is not
formalizable in $\Ztwo$. We now bound the metamathematical cost. Write
$\ZFCm$ for $\mathsf{ZFC}$ with the power-set axiom deleted, formulated
with the collection schema rather than replacement \cite{GHJ}.

\begin{theorem}\label{thm:onecon}
$\mathsf{PRA}+\mathrm{1\text{-}Con}(\Ztwo)$ proves that $\WKL_0+\Rfns$ is
$\Pi^1_1$-conservative over $\RCA$. 
\end{theorem}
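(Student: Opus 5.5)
The plan is to formalize the model-theoretic argument of Theorem~\ref{thm:lr} inside $\ZFCm$ and then transfer it to arithmetic. Two facts drive the transfer. First, $\ZFCm$ and $\Ztwo$ are mutually interpretable in the standard way: $\Ztwo$ interprets $\ZFCm$ via well-founded trees, and $\ZFCm$ proves $\Ztwo$. So $\Pi^1_\infty$ sentences, and in particular arithmetical sentences, provable in $\ZFCm$ are provable in $\Ztwo$, and this transfer is verifiable in $\mathsf{PRA}$. Second, the conservativity claim, fixed to a sentence $\psi$, is $\Pi^0_2$; as a uniform statement ``for all $\psi$ and all proofs $d$ of $\psi$ in $\WKL_0+\Rfns$ there is an $\RCA$-proof'' it is $\Pi^0_2$ as well. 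So it suffices to show: for each fixed finite fragment, $\Ztwo$ proves a $\Sigma^0_1$ (or $\Pi^0_1$) statement whose truth follows from $\mathrm{1\text{-}Con}(\Ztwo)$ uniformly.

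Concretely, I would argue in $\mathsf{PRA}$ as follows. Take a $\Pi^1_1$ sentence $\psi$ and a proof $d$ of $\psi$ in $\WKL_0+\Rfns$. Only finitely many instances of $\Rfns$ are used, say $\Rfns_\Phi$ for a finite $\Phi$. I would then show that $\Ztwo$ (via $\ZFCm$) proves the arithmetical sentence ``$\RCA\vdash\psi$'', which is $\Sigma^0_1$; by $\mathrm{1\text{-}Con}(\Ztwo)$ it is then true. The proof inside $\ZFCm$ follows Theorem~\ref{thm:lr}. Argue by contradiction: if $\RCA\nvdash\psi$, then by the completeness theorem (available in $\ZFCm$) there is a countable model of $\RCA+\neg\psi$. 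We build the $\omega_1$-tower of Theorem~\ref{thm:towers}; this needs $\omega_1$ to exist, which it does in $\ZFCm$ since $\omega_1$ is obtained as the set of countable well-orders modulo isomorphism without power set (Hartogs via collection). We also need the recursion, which uses choice for the generic filters and the $\WKL$ extensions. The resulting structure has only countable first-order part, but its second-order part is of size $\aleph_1$; it is a set in $\ZFCm$. By Lemma~\ref{lem:covering}(ii) applied to each formula of $\Phi$, which is exactly the local, instance-by-instance form noted after that lemma, this structure satisfies $\WKL_0+\Rfns_\Phi+\neg\psi$. Soundness of first-order logic for set-sized structures (provable in $\ZFCm$) then contradicts $d$. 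Hence $\ZFCm\vdash\Pr_{\RCA}(\ulcorner\psi\urcorner)$, and this derivation is produced primitive recursively from $d$. So $\mathsf{PRA}$ proves ``for all $d$, $\Ztwo$ proves the $\Sigma^0_1$ sentence $\Pr_{\RCA}(\psi_d)$'', and $\mathrm{1\text{-}Con}(\Ztwo)$ yields the conservativity.

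The main obstacle is making the $\ZFCm$ construction work uniformly and schematically. The satisfaction relation for $L_2$ formulas over the uncountable structure, and the L\"owenheim--Skolem argument of Lemma~\ref{lem:covering}(ii), must be handled either by defining full satisfaction for set-sized structures, which is possible in $\ZFCm$ by recursion on formulas, or formula by formula. In addition, the truth lemma and the forcing over countable models must be carried out inside $\ZFCm$ for nonstandard-looking internal syntax. I would rely on the full satisfaction predicate for set models, which $\ZFCm$ defines. I would also check that the recursion of length $\omega_1$ with choices is justified by collection plus well-ordering, which holds in $\ZFCm$ with the collection schema. The remaining verifications (Lemmas~\ref{lem:codingns}, \ref{lem:wklext}) are ordinary countable-model arguments and formalize directly.
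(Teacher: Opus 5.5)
Your outer bookkeeping is sound: you amalgamate to one reflection instance per proof, translate primitive recursively into a set-theoretic proof of the $\Sigma^0_1$ sentence $\mathrm{Pr}_{\RCA}(\psi)$, and then apply $\Sigma^0_1$-reflection from $\mathrm{1\text{-}Con}(\Ztwo)$. The gap is in the inner argument, which rests on the claim that $\omega_1$ exists in $\ZFCm$. It does not. The Hartogs construction needs the set of all well-orderings of subsets of $\omega$, a subset of $\mathcal P(\omega\times\omega)$, and collection cannot supply it; indeed $H_{\omega_1}$ is a model of $\ZFCm$ (with collection) in which every set is countable. In fact your two ingredients clash. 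The interpretation of set theory in $\Ztwo$ on which your transfer rests codes hereditarily countable sets by well-founded trees; for plain $\Ztwo$ one must use the constructible ones and gets $\ZFCm+V{=}L+{}$``every set is countable''. That interpretation refutes the existence of $\omega_1$. You also cannot simply add $\exists\omega_1$ as an axiom. In $\ZFCm+\exists\omega_1$, condensation makes $\mathcal P(\omega)\cap L_{\omega_1}$ a set-sized $\omega$-model of $\Ztwo$, so that theory proves $\mathrm{1\text{-}Con}(\Ztwo)$. By the second incompleteness theorem, $\mathsf{PRA}+\mathrm{1\text{-}Con}(\Ztwo)$ then cannot prove its $1$-consistency. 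So the $\aleph_1$-sized set model, to which you wanted to apply a full satisfaction predicate and soundness, is not available.

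The missing idea is to let the class $\mathrm{Ord}$ play the role of $\omega_1$ inside $\ZFCm+V{=}L+{}$``every set is countable''. There every countable set of ordinals is bounded, by replacement, and that is all the uncountable cofinality the argument uses. You run the tower along all ordinals, taking $<_L$-least choices (of enumerations of dense classes, of filters, of Harrington extensions) so that the class-length recursion is definable; its union $\mathcal S$ is then a definable class. A class structure has no satisfaction predicate, so you must work with one fixed instance $\Rfns_\varphi$. The argument of Lemma~\ref{lem:covering}(ii) still goes through, using a fixed formula that defines the $<_L$-least next cover $X_{n+1}$ and replacement to collect $\langle X_n:n\in\omega\rangle$. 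Finally, L\'evy--Montague reflection for the hierarchy $\langle L_\alpha\rangle$ reflects the finitely many relativized sentences to a transitive set. These are a finite axiomatization of $\WKL_0$, $\Rfns_\varphi$, and ``extends $(M,\mathcal S_0)$''. This yields a set model of $\WKL_0+\Rfns_\varphi+\neg\psi$, to which soundness applies.
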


\begin{proof}
The heart of the matter is the following claim:
\emph{for every $L_2$ formula $\varphi$, the set theory
$\ZFCm+V{=}L+{}$``every set is countable'' proves the
$\Pi^1_1$-conservativity of $\WKL_0+\Rfns_\varphi$ over $\RCA$, that
is, $\forall\psi\in\Pi^1_1\,(\mathrm{Pr}_{\WKL_0+\Rfns_\varphi}(\psi)\to
\mathrm{Pr}_{\RCA}(\psi))$.} From the proof given below it will be
clear that these set-theoretic proofs can be produced primitive
recursively from $\varphi$, so that the claim itself is formalizable
in $\mathsf{PRA}$.

First let us prove that the claim yields the theorem. The conservativity
statement being proved is $\Pi^0_2$, and over $\mathsf{PRA}$ the principle
$\mathrm{1\text{-}Con}(T)$ is equivalent to uniform
$\Pi^0_2$-reflection for $T$. So $\mathsf{PRA}$ together with the
$1$-consistency of the set theory of the claim proves each
conservativity statement of the claim to be \emph{true}. Then,
given a proof of a $\Pi^1_1$ sentence $\psi$ in $\WKL_0+\Rfns$, we
extract from it the finitely many reflection instances used; by the
amalgamation following Definition~\ref{def:lr}, a single instance
$\Rfns_\varphi$ implies them all, so $\WKL_0+\Rfns_\varphi\vdash\psi$,
with $\varphi$ found primitive recursively from the proof. Applying
the conservativity for that $\varphi$ we obtain
$\mathrm{Pr}_{\RCA}(\psi)$. This is the conservation theorem.

Next, the $1$-consistency of $\ZFCm+V{=}L+{}$``every set is
countable'' is implied by $\mathrm{1\text{-}Con}(\Ztwo)$: $\Ztwo$ proves every axiom of the set
theory of the claim to hold in the constructible sets, hereditarily
countable and coded, say, by well-founded extensional trees on $\N$.
A complete proof of this interpretation result is given by Kanovei and
Lyubetsky \cite{KanoveiLyubetsky}, who also trace its history. The
result has been known in essence since Kreisel's survey
\cite{Kreisel1968}; it was published by Apt and Marek
\cite[Theorem~5.5]{AptMarek} in a version with the countable choice
scheme on the arithmetical side, and by Marek
\cite{Marek1978} without it. This provability is
itself verifiable in $\mathsf{PRA}$: the relativizing proof
translation is primitive recursive and leaves arithmetical sentences
fixed, so $\mathsf{PRA}$ proves that $\mathrm{1\text{-}Con}(\Ztwo)$
implies the $1$-consistency of the set theory.

\emph{Proof of the claim.} We reason in
$\ZFCm+V{=}L+{}$``every set is countable'' and check that
the proof of Theorem~\ref{thm:lr} with $\Rfns_\varphi$ instead of
$\Rfns$ goes through, pausing at the steps that need attention.
It suffices to verify that the proof of the variant of Corollary~\ref{cor:rfnext}
with $\Rfns_\varphi$ instead of $\Rfns$ can be carried out.
The generic listing of Lemma~\ref{lem:codingns}, the Harrington extensions of
Lemma~\ref{lem:wklext}, and the successor
and limit steps of Theorem~\ref{thm:towers} are ordinary mathematics
about countable objects and formalize in $\ZFCm$ as they stand. The following
steps require more attention. First we need to adapt the proof of
Theorem~\ref{thm:towers}, and the adapted proof will produce a model $(M,\mathcal{S})$
with a class-sized second-order part $\mathcal{S}$
(we do this below).
Then we use the argument of Lemma~\ref{lem:covering}(ii) to show that $\Rfns_\varphi$
holds in $(M,\mathcal{S})$ (we also do this below). This, however, does not
yet deliver Corollary~\ref{cor:rfnext}, since we want to get a set model
instead of a class model, for which we lack a full
satisfaction class. What we know at this point is that the
$(M,\mathcal{S})$-relativizations of $\Rfns_\varphi$ and of all the axioms of
$\mathsf{WKL}_0$ hold (we fix some finite axiomatization of $\mathsf{WKL}_0$). Then
we use transitive model reflection --- provable in $\ZFCm+V{=}L$ by
the usual proof of the L\'evy--Montague reflection theorem in
$\mathsf{ZFC}$, with the hierarchy $\langle
L_\alpha\rangle_{\alpha\in\mathrm{Ord}}$ in place of $\langle
V_\alpha\rangle_{\alpha\in\mathrm{Ord}}$ --- to reflect, to a
transitive set, the previous sentence together with the assertion that
$(M,\mathcal{S})$ extends the starting (set) model $(M,\mathcal{S}_0)$.
From the external point of view this gives us a set-sized $(M,\mathcal{S}')\models \mathsf{WKL}_0+\Rfns_\varphi$
extending $(M,\mathcal{S}_0)$, which allows
us to obtain Corollary~\ref{cor:rfnext} and thus finish the proof.

It remains to discharge the two debts incurred above. The first is
the adapted proof of Theorem~\ref{thm:towers}: we run its
construction with a class model as the result. Every set is countable, so there is no
$\omega_1$ to stop at: the tower runs along \emph{all} ordinals. A
class-length recursion must make its choices uniformly definably, and
this is what $V{=}L$ is for: at every point of the construction take
the $<_L$-least object with the required property (the $<_L$-least
enumeration of the dense classes, the filter obtained by always
passing to the $<_L$-least condition meeting the next class, the
$<_L$-least Harrington extensions). Thus we obtain a
definable, increasing, continuous class sequence
$\langle\mathcal{S}_\alpha:\alpha\in\mathrm{Ord}\rangle$ of countable
families with $(M,\mathcal{S}_\alpha)\models\WKL_0$ for $\alpha>0$
and exact covers
$X_\alpha\in\mathcal{S}_{\alpha+1}$. We write $\mathcal{S}$ for
the union, a definable class family; its continuous covering
property for $\mathcal{I}=\{X_\alpha:\alpha\in\mathrm{Ord}\}$
is verified as in Theorem~\ref{thm:towers}.

The second debt is reflection in the class model: we check that the
proof of Lemma~\ref{lem:covering}(ii) works
without changes for $(M,\mathcal{S})$ to show that
$\Rfns_\varphi$ holds in it. The family $\Phi$ of subformulas of
$\varphi$ used in that proof is now a fixed finite family.
We can therefore write a fixed first-order formula of set theory defining
the $<_L$-least suitable $X_{n+1}$ from the previous $X$'s, and use
the replacement schema to assemble the $X_n$'s into a set sequence
$\langle X_n:n\in\omega\rangle$ --- this is where the full strength
of the schema comes into play. The rest of the proof, the limit over
the chain and the induction over the subformulas, reads as before.
\end{proof}

What underwrites the status of $\WKL_0$ as a partial realization of
Hilbert's program is not merely that it is $\Pi^0_2$-conservative over
$\mathsf{PRA}$, but that this conservation is established by
\emph{finitary} means.\footnote{Friedman's original argument was
model-theoretic, but by now several genuinely finitistic proofs are
known: Sieg \cite{Sieg1985} gave a primitive recursive transformation
converting any $\WKL_0$-proof of a $\Pi^0_2$ sentence into a
$\mathsf{PRA}$-proof of the same sentence, by cut elimination, Herbrand
analysis, and majorization; H\'ajek \cite{Hajek1993} interpreted
$\WKL_0$ in $\mathsf{I}\Sigma_1$ by formalizing the low basis theorem,
which yields the theorem when composed with Parsons' conservation of
$\mathsf{I}\Sigma_1$ over $\mathsf{PRA}$; Avigad \cite{Avigad1996}
formalized Harrington's forcing argument within $\RCA$ itself, giving a
polynomial-length translation of proofs of $\Pi^1_1$ theorems of
$\WKL_0$ into $\RCA$; and Kohlenbach \cite{Kohlenbach1992} eliminated
weak K\"onig's lemma by G\"odel's functional interpretation combined
with majorization.} The conservativity of Theorem~A has
no such pedigree as it stands. Our proof passes through the
$\omega_1$-tower, a genuinely uncountable construction, and
Theorem~\ref{thm:onecon} compresses it into
$\mathsf{PRA}+\mathrm{1\text{-}Con}(\Ztwo)$, which is still a long way
from finitism.

\begin{problem}\label{prob:finitism}
Is the conservativity of Theorem~A finitistically provable?
\end{problem}

The problem has a quantitative companion. A proof of a conservation
theorem bounds the associated \emph{speed-up}: the growth in length
between a proof of a $\Pi^1_1$ sentence in the extended theory and
the shortest proof in the base. The usual situation is a polynomial
proof transformation --- Avigad's translation \cite{Avigad1996}
bounds the speed-up of $\WKL_0$ over $\RCA$ polynomially --- or a
hyperexponential speed-up, as with the conservativity of $\ACA$
over first-order Peano arithmetic \cite{Pudlak}.
Theorem~\ref{thm:onecon}, by contrast, bounds the speed-up of
$\WKL_0+\Rfns$ only by a function provably total in
$\mathsf{PRA}+\mathrm{1\text{-}Con}(\Ztwo)$.

\begin{problem}\label{prob:speedup}
What is the speed-up of $\WKL_0+\Rfns$ over $\WKL_0$ --- equivalently,
by \cite{Avigad1996}, over $\RCA$ --- for proofs of $\Pi^1_1$
sentences? In particular, is it bounded by a primitive recursive
function?
\end{problem}

Theorem~\ref{thm:onecon} and Problems~\ref{prob:finitism}
and~\ref{prob:speedup} have a precedent in the metamathematics of
Takeuti's fundamental conjecture, cut elimination for the
second-order sequent calculus: proved by Tait \cite{Tait1966}, it
is a $\Pi^0_2$ theorem equivalent over $\mathsf{PRA}$ to
$\mathrm{1\text{-}Con}(\Ztwo)$, as is the strong normalization of
System~F, through which Girard reproved it \cite{Girard1971}; see
\cite{Takeuti1987,GirardLafontTaylor}. The conjecture has a
conservation reading: comprehension enters a proof of a
first-order sequent only through cuts, and a cut-free proof of
such a sequent is first-order throughout, so cut elimination makes
the expansion of any first-order theory by full second-order logic
conservative over that theory. The metamathematics is analogous to
that of Theorem~\ref{thm:onecon}: each instance of cut elimination
is provable in $\Ztwo$, and the uniform $\Pi^0_2$ statement
follows by uniform $\Pi^0_2$-reflection for $\Ztwo$, which over
$\mathsf{PRA}$ is exactly $\mathrm{1\text{-}Con}(\Ztwo)$. But here
the cost is known to be necessary: the speed-up exceeds every
$\Ztwo$-provably total computable function. Take Robinson's
$\mathsf{Q}$ for the first-order theory. The second-order
expansion interprets $\Ztwo$ on a definable cut, and so proves,
for each Turing machine that $\Ztwo$ proves total, the $\Sigma_1$
sentences asserting its termination on particular inputs, by
proofs of length elementary in the input; a $\mathsf{Q}$-proof of
such a sentence yields a witness bounded primitive recursively in
its length, and so is, up to a primitive recursive shift, at least
as long as the computation it certifies. These methods are not
directly applicable to $\Rfns$, and they leave multiple
possibilities open: Theorem~A may follow the precedent of
$\WKL_0$, whose model-theoretic conservation proof was compressed
into finitary form after the fact, or that of System~F, where the
speed-up is real and $\mathrm{1\text{-}Con}(\Ztwo)$ is the price,
or perhaps a more conventional intermediate scenario, like the
conservativity of $\mathsf{ACA}_0$ over $\mathsf{PA}$.

\end{document}